\theoremstyle{plain}
\newtheorem{theorem}{Theorem}[section]
\newtheorem{hypothesis}{Hypothesis}[section]
\newaliascnt{corollary}{theorem}
\newaliascnt{lemma}{theorem}
\newtheorem{lemma}[lemma]{Lemma}
\newaliascnt{proposition}{theorem}
\newtheorem{proposition}[proposition]{Proposition}
\newaliascnt{hypotheses}{theorem}
\theoremstyle{definition}
\newaliascnt{definition}{theorem}
\newtheorem{definition}[definition]{Definition}
\newaliascnt{example}{theorem}
\newaliascnt{remark}{theorem}
\newtheorem{remark}[remark]{Remark}
\newaliascnt{remarks}{theorem}
\numberwithin{equation}{section}
\DeclareMathOperator{\Gr}{Gr}
\DeclareMathOperator{\image}{Im}
\newcommand{\mynote}[1]{}
\begin{document}
\title[Kudla--Millson forms and degenerations of Hodge structure]{Kudla--Millson forms and one--variable degenerations of Hodge structure}
\begin{abstract}
We consider arbitrary polarized variations of Hodge structure of weight two and $h^{2,0}=1$ over a non--singular complex algebraic curve $S$ and analyze the boundary behaviour of the associated Kudla--Millson theta series using Schmid's theorems on degenerations of Hodge structure. This allows us to prove that this theta series is always integrable over $S$ and to describe explicitly the non-holomorphic part of the Kudla--Millson generating series in terms of the mixed Hodge structures at infinity.
\end{abstract}
\author{Luis E. Garc\'ia}
\maketitle
\setcounter{tocdepth}{1}
\tableofcontents

\section{Introduction}

The goal of this paper is to study the behaviour under degeneration of Hodge structure of certain theta series introduced by Kudla and Millson to study special cycles on Shimura varieties. Unlike previous work that analyzes the case where $S$ is a special subvariety in a toroidal compactification of a Shimura variety, here we consider polarized variations of Hodge structure with $h^{2,0}=1$ over an arbitrary non--singular complex curve $S$. 

\subsection{Main results} Let $\overline{S}$ be a connected compact Riemann surface and denote by $S$ the Riemann surface obtained by removing a finite number of points from $\overline{S}$. Consider an integral polarized variation of Hodge structure ($\mathbb{Z}$--PVHS)
\begin{equation} \label{intro:eq_1}
\xymatrix{
\mathbb{V}  = (\mathcal{V}_\mathbb{Z},Q,\mathcal{F}^\bullet) \ar[d] \\ S
}
\end{equation}
of weight two with $h^{2,0}=1$. Here $\mathcal{V}_\mathbb{Z}$ denotes the local system underlying $\mathbb{V}$ and we write $Q$ for the polarization and $\mathcal{F}^\bullet$ for the Hodge filtration. Let us write $\mathcal{L}$ for the line bundle $\mathcal{F}^2$ over $S$ and $\mathcal{V}^\vee_\mathbb{Z} \supseteq \mathcal{V}_\mathbb{Z}$ for the dual lattice of $\mathcal{V}_\mathbb{Z}$, that is,
$$
\mathcal{V}^\vee_{\mathbb{Z}s} = \left\{ v \in \mathcal{V}_{\mathbb{Q}s} \ | \ Q(v,v') \in \mathbb{Z} \text{ for all } v \in \mathcal{V}_{\mathbb{Z}s} \right\}.
$$
In order to state our main results succintly we will assume the following mild condition on $\mathbb{V}$ (cf. Remark \ref{remark:hyp_intro} below).

\begin{hypothesis} \label{hypothesis:hyp_on_V}
For any $s \in S$, the lattice $(\mathcal{V}_{\mathbb{Z}s},Q)$ is even and $\mathcal{V}_{\mathbb{Q}s}$ is a simple $\pi_1(S,s)$--module. Moreover, the fundamental group of $S$ acts trivially on $\mathcal{V}^\vee_\mathbb{Z}/\mathcal{V}_\mathbb{Z}$ and the monodromy of $\mathcal{V}_\mathbb{Z}$ around each $P \in \overline{S} \, \backslash \, S$ is unipotent and non--trivial.
\end{hypothesis}

We will be interested in the Noether--Lefschetz loci of $\mathbb{V}$: for a positive rational number $m$ and $\mu \in \mathcal{V}^\vee_\mathbb{Z}/\mathcal{V}_\mathbb{Z}$, define
\begin{equation} \label{def:NL_locus_mu_0}
\mathrm{NL}_\mathbb{V}(m)_\mu = \{ s \in S \ | \ \exists v \in (\mu + \mathcal{V}_{\mathbb{Z} s})  \cap \mathcal{F}^1_s \text{ with } Q(v,v)=2m \}.
\end{equation}
The locus $\mathrm{NL}_\mathbb{V}(m)_\mu \subset S$ has a natural complex analytic space structure \cite[\textsection 5.3.1]{VoisinII}; in fact, by the celebrated theorem of Cattani--Deligne--Kaplan it is a proper algebraic subset of $S$ with a natural scheme structure. Let us write $\mathrm{deg} \ \mathrm{NL}_\mathbb{V}(m)_\mu$ for the degree of the divisor naturally associated with $\mathrm{NL}_\mathbb{V}(m)_\mu$ and $\overline{\mathcal{L}}$ for Deligne's canonical extension of $\mathcal{L}$ to a line bundle over $\overline{S}$ and form the generating series
\begin{equation}
Z_\mathbb{V}^+(\tau)_\mu = -\deg (\overline{\mathcal{L}})\delta_{\mu,0} + \sum_{m >0} \deg \mathrm{NL}_\mathbb{V}(m)_\mu \cdot q^m, \quad q=e^{2\pi i \tau}.
\end{equation}

When $S$ is compact, the series $Z_\mathbb{V}^+(\tau)$ are known to be modular forms of possibly half--integral weight. More precisely, let $\mathrm{Mp}_2(\mathbb{Z})$ denote the metaplectic double cover of $\mathrm{SL}_2(\mathbb{Z})$ and let $\rho_{\mathcal{V}_\mathbb{Z}}$ be the Weil representation of $\mathrm{Mp}_2(\mathbb{Z})$ on the group algebra $\mathbb{C}[\mathcal{V}^\vee_\mathbb{Z}/\mathcal{V}_\mathbb{Z}]$, which has a standard basis $e^\mu$ indexed by $\mu \in \mathcal{V}^\vee_\mathbb{Z}/\mathcal{V}_\mathbb{Z}$. The work of Kudla and Millson \cite{KudlaMillson3} implies that the generating series
\begin{equation} \label{eq:gen_series}
Z_\mathbb{V}^+(\tau) = \sum_\mu Z^+_\mathbb{V}(\tau)_\mu \cdot e^\mu
\end{equation}
is a $\rho_{\mathcal{V}_\mathbb{Z}}$--valued modular form of weight $\mathrm{rk}(\mathcal{V}_\mathbb{Z})/2$. Their proof proceeds by constructing first certain theta series
\begin{equation}
\Theta_{\mathbb{V}}(\tau)_\mu \in \Omega^{1,1}(S), \quad \mu \in \mathcal{V}^\vee_\mathbb{Z}/\mathcal{V}_\mathbb{Z},
\end{equation}
depending on $\tau \in \mathbb{H}$, that transform like non--holomorphic modular forms. When $S$ is compact one can consider the integral $\smallint_S \Theta_\mathbb{V}(\tau)_\mu$, which inherits the transformation properties of $\Theta_{\mathbb{V}}(\tau)_\mu$, and so the modularity of $Z_\mathbb{V}^+(\tau)$ follows from the identity (cf. \cite[Theorem 2]{KudlaMillson3})
\begin{equation}
Z_\mathbb{V}^+(\tau)_\mu = \int_S \Theta_\mathbb{V}(\tau)_\mu.
\end{equation}

The goal of this paper is to generalize these results to the setting of \eqref{intro:eq_1} with $S$ non-compact. In this case the differential forms $\Theta_\mathbb{V}(\tau)_\mu$ often have singularities around the points in $\overline{S} \backslash S$. Our first result is the theorem below showing that these are always mild enough that $\Theta_\mathbb{V}(\tau)_\mu$ is integrable on $S$; note that we do not impose Hypothesis \ref{hypothesis:hyp_on_V}. 

\begin{theorem} \label{thm:intro_1}
Let $S$ be a smooth complex algebraic curve and  $\mathbb{V} \to S$ be a $\mathbb{Z}$--PVHS over $S$ of weight two with $h^{2,0}=1$ such that the action of $\pi_1(S,s)$ on $\mathcal{V}^\vee_{\mathbb{Z}}/\mathcal{V}_\mathbb{Z}$ is trivial. Then the integral
$$
Z_\mathbb{V}(\tau)_\mu = \int_S \Theta_\mathbb{V}(\tau)_\mu
$$
converges for every $\mu \in \mathcal{V}^\vee_\mathbb{Z}/\mathcal{V}_\mathbb{Z}$ and the expression
$$
Z_\mathbb{V}(\tau) = \sum_{\mu \in \mathcal{V}^\vee_\mathbb{Z}/\mathcal{V}_\mathbb{Z}} Z_\mathbb{V}(\tau)_\mu \cdot e^\mu
$$
defines a (possibly non-holomorphic) modular form of weight $\mathrm{rk}(\mathcal{V}_\mathbb{Z})/2$ valued in $\rho_{\mathcal{V}_\mathbb{Z}}$.
\end{theorem}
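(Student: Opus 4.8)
The plan is to reduce the convergence statement to a local estimate near each point of $\overline{S}\setminus S$, to establish the required growth bound through Schmid's asymptotic analysis of the period map, and then to deduce modularity formally from the pointwise transformation law of $\Theta_\mathbb{V}(\tau)_\mu$ together with the uniform integrability that this analysis provides. First I would record the role of the hypothesis: since $\pi_1(S,s)$ acts trivially on $\mathcal V^\vee_\mathbb Z/\mathcal V_\mathbb Z$, the coset $\mu+\mathcal V_{\mathbb Z s}$ is preserved by parallel transport, so that $\Theta_\mathbb V(\tau)_\mu=\sum_{v\in\mu+\mathcal V_{\mathbb Z s}}\varphi_{KM}(v,z(s),\tau)$ descends to a single-valued smooth $(1,1)$-form on $S$. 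Here $z(s)\in \Gr$ is the oriented positive-definite $2$-plane $\mathrm{span}_\mathbb R\{\mathrm{Re}\,\Omega(s),\mathrm{Im}\,\Omega(s)\}$ spanned by a generator $\Omega(s)$ of $\mathcal L_s=\mathcal F^2_s$ inside the period domain $\Gr$ of the real quadratic space $(\mathcal V_{\mathbb R s},Q)$ of signature $(2,n)$, $n=\mathrm{rk}(\mathcal V_\mathbb Z)-2$, and $\varphi_{KM}$ is the Kudla--Millson Schwartz form. As $\overline S$ is compact and $\Theta_\mathbb V(\tau)_\mu$ is smooth on $S$, the integral over any compact subset is finite, and it suffices to prove $\int_{\Delta_P^*}\lvert\Theta_\mathbb V(\tau)_\mu\rvert<\infty$ over a small punctured coordinate disk $\Delta^*_P=\{0<\lvert t\rvert<\epsilon\}$ around each $P\in\overline S\setminus S$.

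Fix such a $P$. By the monodromy theorem the local monodromy $T$ is quasi-unipotent; writing $T=T_sT_u$ and $N=\log T_u$, Schmid's nilpotent orbit theorem gives, in the coordinate $t=e^{2\pi i w}$ with $w=x+iy$, the asymptotic description $\mathcal F^\bullet_s=\exp(wN)\,\psi(t)$ with $\psi$ holomorphic at $t=0$ and $\psi(0)=F_\infty$ the limiting Hodge filtration; this pins down the behaviour of $\Omega(s)$ and hence of $z(s)$ as $y\to\infty$. The essential analytic input is then the $\mathrm{SL}_2$-orbit theorem and the associated norm estimates: for a flat vector lying in the $\ell$-th graded piece of the monodromy weight filtration $W_\bullet=W(N)$ centred at $2$ (so that $N\colon\Gr^W_{2+k}\xrightarrow{\sim}\Gr^W_{2-k}$), its Hodge norm satisfies $\lVert v\rVert^2_s\asymp(\log 1/\lvert t\rvert)^{\ell-2}$. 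Since the majorant attached to $z(s)$ \emph{is} the Hodge norm, $(v,v)_{z(s)}=\lVert v\rVert^2_s$, this gives the exact asymptotics of the Gaussian $e^{-\pi(v,v)_{z(s)}}$ term by term, as well as control, via Griffiths transversality and the same estimates, on the pullback of the invariant Kudla--Millson form, which I would write as $\Theta_\mathbb V(\tau)_\mu=g(w,\bar w)\,\tfrac i2\,dw\wedge d\bar w=g(w,\bar w)\,dx\wedge dy$.

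The goal of the local computation is then the bound $\int_{y_0}^\infty\!\int_0^1\lvert g(w,\bar w)\rvert\,dx\,dy<\infty$, and I expect this to be the main obstacle. The difficulty is that $\Theta_\mathbb V(\tau)_\mu$ does \emph{not} decay at the boundary: the vectors with $\ell<2$, that is those in the isotropic, low-weight part of $W_\bullet$ (the image of $N$, respectively $N^2$, according to whether $N^2=0$ or $N^2\neq 0$ — the only two non-trivial one-variable degenerations in the $h^{2,0}=1$ case), have majorant tending to $0$ and therefore contribute a non-vanishing limit. These terms assemble into a \emph{boundary theta series} attached to the graded pieces of the limiting mixed Hodge structure, so one cannot argue term by term. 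The crux is to show that this boundary contribution grows only polynomially in $y$ — the degenerate theta sum over an isotropic rank-one lattice with majorant $\asymp(2\pi y)^{-1}$ grows like $y^{1/2}$, and higher powers arise analogously — while the pullback of the Kudla--Millson form supplies a compensating decay in $y$ coming from the asymptotics of the period-map derivative. The finitely many degeneration types make this tractable case by case, and the polynomial-times-decay bound yields the integral.

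Finally, for modularity I would argue as follows. By the Kudla--Millson construction \cite{KudlaMillson3}, for each fixed $s\in S$ and every $\gamma\in\mathrm{Mp}_2(\mathbb Z)$ the vector $\Theta_\mathbb V(\tau)=\sum_\mu\Theta_\mathbb V(\tau)_\mu\,e^\mu$ satisfies $\Theta_\mathbb V(\gamma\tau)(s)=(c\tau+d)^{\mathrm{rk}(\mathcal V_\mathbb Z)/2}\,\rho_{\mathcal V_\mathbb Z}(\gamma)\,\Theta_\mathbb V(\tau)(s)$, with an automorphy factor independent of $s$. Because the estimates of the previous paragraph can be made locally uniform for $\tau$ in compact subsets of $\mathbb H$, dominated convergence allows interchanging $\int_S$ with the linear action of $\rho_{\mathcal V_\mathbb Z}(\gamma)$ and with the scalar automorphy factor, giving $Z_\mathbb V(\gamma\tau)=(c\tau+d)^{\mathrm{rk}(\mathcal V_\mathbb Z)/2}\rho_{\mathcal V_\mathbb Z}(\gamma)\,Z_\mathbb V(\tau)$. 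The same uniform bounds show that $Z_\mathbb V(\tau)$ is real-analytic in $\tau$ and of moderate growth as $y\to\infty$, so that $Z_\mathbb V(\tau)$ is a (possibly non-holomorphic) modular form of weight $\mathrm{rk}(\mathcal V_\mathbb Z)/2$ valued in $\rho_{\mathcal V_\mathbb Z}$, as claimed.
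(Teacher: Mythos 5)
Your overall architecture matches the paper's: reduce to a punctured disk around each point of $\overline{S}\setminus S$, pass to unipotent monodromy, invoke Schmid's nilpotent and $\mathrm{SL}_2$--orbit theorems and the Hodge norm estimates, approximate $\mathbb{V}$ by a nilpotent orbit, and treat the finitely many degeneration types separately; the modularity argument via dominated convergence is also the one the paper uses. However, there is a genuine gap at exactly the point you flag as "the crux", and the resolution you propose does not work.

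The problem is quantitative. Take a type II degeneration and the sum over the rank--two isotropic lattice $W_1\cap\mathcal{V}_\mathbb{Z}^\vee$, on which the Hodge norm is $\asymp(-\log|t|)^{-1}\asymp y^{-1}$ (with $t=e^{2\pi i w}$, $y=\mathrm{Im}(w)$). By Poisson summation this theta sum grows like $y$, not $y^{1/2}$; meanwhile the Chern form $\Omega$ decays like $y^{-2}\,dx\,dy$. The product is $\asymp y^{-1}\,dx\,dy$, and $\int^{\infty}y^{-1}\,dy$ diverges. The same $y\cdot y^{-2}=y^{-1}$ count occurs in type III (there $W_0$ has rank one but the majorant is $\asymp y^{-2}$, so the sum again grows like $y$). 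This is precisely the bound the paper records in \eqref{eq:trivial_estimate_Theta'} and explicitly states is \emph{not} enough for integrability. So "polynomial growth times the decay of the pulled-back form" cannot close the argument.

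What actually saves the integral is a cancellation special to the Kudla--Millson Schwartz form: restricted to the isotropic fiber directions it equals (a multiple of $\Omega$ times) an explicit function $\phi$ --- $\phi(a)=e^{-\pi|a|^2}(\pi|a|^2-1)$ in type II, $\phi(b)=e^{-2\pi b^2}(4\pi b^2-1)$ in type III --- whose Fourier transform vanishes at the origin, $\mathcal{F}\phi(0)=0$ (see \eqref{eq:Four_transf_const_term_type_II} and \eqref{eq:Four_transf_const_term_type_III}). Applying Poisson summation to the sum over the isotropic sublattice, the $m=0$ term --- which is exactly the divergent $O(y^{-1})$ contribution above --- drops out, and the remaining terms are rapidly decreasing as $y\to\infty$. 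Your write-up never identifies this vanishing, and without it the local integrability of $\Theta_\mathbb{V}(\tau)'_\mu$ (the sum over $v\in W_2$) does not follow. To repair the proof you would need to compute $\varphi_{\mathrm{KM}}$ explicitly on the $\mathbb{R}$--split nilpotent orbit in each degeneration type and exhibit this cancellation, which is the content of Sections \ref{subsection:integrability_type_ii} and \ref{subsection:integrability_type_iii} of the paper.
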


Our second main result gives the precise relation between the non-holo\-morphic modular form $Z_\mathbb{V}(\tau)$ and the generating series $Z_\mathbb{V}^+(\tau)$: one can write
$$
Z_\mathbb{V}(\tau) - Z_\mathbb{V}^+(\tau) = \sum_{P \in \overline{S} \, \backslash \, S} Z_{\mathbb{V},P}^-(\tau),
$$
where the term $Z_{\mathbb{V},P}^-(\tau)$ indexed by a given point $P \in \overline{S} \, \backslash \, S$ is determined explicitly from the polarized mixed Hodge structure defined by the degeneration of $\mathbb{V}$ at $P$. More precisely, let us write $V_\mathbb{Z}$ for the space of global multivalued sections of $\mathcal{V}_\mathbb{Z}$, that is, the space of global sections of the pullback of $\mathcal{V}_\mathbb{Z}$ to a universal cover of $S$. The pair $(V_\mathbb{Z},Q)$ is an even lattice of signature $(h^{1,1},2)$. A point $P \in \overline{S} \backslash S$ determines then an endomorphism $N(P)$ of $V_\mathbb{Z} \otimes \mathbb{Q}$ (the local monodromy logarithm) and an ascending filtration $W(P)_\bullet$ of $V_\mathbb{Z} \otimes \mathbb{Q}$ (the shifted weight filtration) such that the quotients
$$
\mathrm{Gr}_k^{W(P)} V_\mathbb{Z} := (W(P)_k \cap V_\mathbb{Z}) / (W(P)_{k-1} \cap V_\mathbb{Z})
$$
are free abelian groups of finite rank. The pair $(Q,N(P))$ determine bilinear forms $Q_k$ on $\mathrm{Gr}_k^{W(P)} V_\mathbb{Z}$ that define a structure of positive definite even lattice on $\mathrm{Gr}_4^{W(P)} V_\mathbb{Z}$ and on a certain sublattice $\mathrm{Gr}_{2,\mathrm{prim}}^{W(P)}V_\mathbb{Z} \subseteq \mathrm{Gr}_2^{W(P)}V_\mathbb{Z}$; the elements of $\mathrm{Gr}_{2,\mathrm{prim}}^{W(P)}V_\mathbb{Z}$ can be thought of as classes that become Hodge ``at infinity''.

Associated with these data are positive integers
$
r_k(V_\mathbb{Z},N(P))$ ($k=1,2$),  $\deg(Q_3)$ and $\mathrm{Vol}(\mathrm{Gr}_4^{W(P)} V_\mathbb{Z})$
as well as holomorphic theta series
$$
\Theta_{\mathrm{Gr}_{2,\mathrm{prim}}^{W(P)}V_\mathbb{Z}}(\tau), \quad \Theta_{\mathrm{Gr}_4^{W(P)}V_\mathbb{Z}}(\tau),
$$
valued in finite--dimensional representations $\rho_{\mathrm{Gr}_{2,\mathrm{prim}}
^{W(P)}V_\mathbb{Z}}$ and $\rho_{\mathrm{Gr}_4^{W(P)} V_\mathbb{Z}}$. The representations $\rho_{\mathrm{Gr}_2
^{W(P)}V_\mathbb{Z}}$ and $\rho_{\mathrm{Gr}_{2,\mathrm{prim}}
^{W(P)}V_\mathbb{Z}} \otimes \rho_{\mathrm{Gr}_4^{W(P)} V_\mathbb{Z}}$ admit intertwining maps to $\rho_{\mathcal{V}_\mathbb{Z}}$ that we denote by $\iota$.

\begin{theorem}\label{thm:main_thm_intro}
Assume that $\mathbb{V}$ satisfies \ref{hypothesis:hyp_on_V}. For $P \in \overline{S} \ \backslash \ S$, denote by $N(P)$ the local monodromy logarithm and by $W(P)_\bullet$ the corresponding (shifted) weight filtration and define
$$
Z_{\mathbb{V},P}^-(\tau) = \frac{r_1(V_\mathbb{Z},N(P))}{\deg(Q_3)} \frac{1}{4 \pi \mathrm{Im}(\tau)} \iota(\Theta_{\mathrm{Gr}_2^{W(P)} V_\mathbb{Z}})
$$
if $N(P)^2=0$ and
\begin{equation*}
\begin{split}
Z^-_{\mathbb{V},P}(\tau) = &  \frac{r_2(V_\mathbb{Z},N(P))}{\mathrm{Vol}(\mathrm{Gr}_4^{W(P)} V_\mathbb{Z})^{1/2}} \frac{1}{4\pi i} 
 \int_{-\overline{\tau}}^{i\infty} \frac{\iota(\Theta_{\mathrm{Gr}_4^{W(P)} V_\mathbb{Z}}(z) \otimes \Theta_{\mathrm{Gr}_{2,\mathrm{prim}}^{W(P)} V_\mathbb{Z}}(\tau))}{((z+\tau)/i)^{3/2}} dz
\end{split}
\end{equation*}
if $N(P)^2 \neq 0$. Then
$$
Z_\mathbb{V}(\tau) = Z^+_\mathbb{V}(\tau) + \sum_{P \in \overline{S} \, \backslash \, S} Z^-_{\mathbb{V},P}(\tau).
$$
In particular, the right hand side is 
 a $\rho_{\mathcal{V}_\mathbb{Z}}$-valued modular form of weight $rk(\mathcal{V}_\mathbb{Z})/2$. 
\end{theorem}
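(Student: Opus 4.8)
The plan is to reduce the identity to a purely local computation at each puncture by combining a Kudla--Millson differential equation with Stokes' theorem, and then to evaluate the resulting boundary integrals using Schmid's asymptotic analysis of the period map. Since Theorem~\ref{thm:intro_1} already guarantees that $Z_\mathbb{V}(\tau)$ is a $\rho_{\mathcal{V}_\mathbb{Z}}$--valued modular form of weight $\mathrm{rk}(\mathcal{V}_\mathbb{Z})/2$, the final sentence of the statement is automatic once the displayed identity is proved; the content lies in the identity itself. I would match the two sides in two stages: first their anti--holomorphic derivatives in $\tau$, then their holomorphic parts.

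For the first stage I would use a differential equation of Kudla--Millson type relating $\partial_{\bar\tau}\Theta_\mathbb{V}(\tau)_\mu$ to a $d$--exact form $d\,\Lambda_\mathbb{V}(\tau)_\mu$ on $S$, where $\Lambda_\mathbb{V}(\tau)_\mu$ is the theta series attached to the weight--lowered Schwartz form. Writing $S_\epsilon = \overline{S}\setminus\bigsqcup_P \Delta_P(\epsilon)$ and differentiating under the (convergent) integral, Stokes' theorem gives
$$
\partial_{\bar\tau} Z_\mathbb{V}(\tau)_\mu = \lim_{\epsilon\to 0}\int_{S_\epsilon} d\,\Lambda_\mathbb{V}(\tau)_\mu = -\sum_{P}\lim_{\epsilon\to 0}\oint_{\partial \Delta_P(\epsilon)}\Lambda_\mathbb{V}(\tau)_\mu,
$$
so that the non--holomorphicity of the theta integral is concentrated in a sum of boundary integrals, one for each $P$. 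This already localizes the problem, and since $Z^+_\mathbb{V}(\tau)$ is holomorphic in $\tau$, proving the theorem at the level of $\partial_{\bar\tau}$ amounts to matching each boundary integral with $\partial_{\bar\tau}Z^-_{\mathbb{V},P}(\tau)$.

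To evaluate the boundary integral at a fixed $P$, I would pass to the universal cover of the punctured disk, with coordinate $w$ in the upper half plane so that $P$ corresponds to $\mathrm{Im}(w)\to\infty$ and the monodromy is $T=e^{N(P)}$. Schmid's nilpotent orbit theorem replaces the Hodge filtration by the orbit $e^{wN(P)}F_\infty^\bullet$ with an error controlled, through the $SL_2$--orbit theorem, by estimates exponentially small in $\mathrm{Im}(w)$; substituting this approximation and letting $\epsilon\to 0$ reduces the circle integral to an explicit Gaussian integral in which the only surviving data are $N(P)$, the shifted weight filtration $W(P)_\bullet$ and the forms $Q_k$ on $\mathrm{Gr}_k^{W(P)}V_\mathbb{Z}$. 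Decomposing the theta kernel along $W(P)_\bullet$ splits this model integral into a product over graded pieces, in which the positive--definite lattices contribute holomorphic theta series, the coset datum is recorded by the intertwiner $\iota$, and the integers $r_k(V_\mathbb{Z},N(P))$, $\deg(Q_3)$ and $\mathrm{Vol}(\mathrm{Gr}_4^{W(P)}V_\mathbb{Z})$ emerge as the lattice indices and covolumes governing the change of variables.

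The shape of the answer is then dictated by the Jordan type of $N(P)$. When $N(P)^2=0$ the largest block has size two: the weight filtration has weights $1,2,3$ with $\mathrm{Gr}_3^{W(P)}V_\mathbb{Z}$ positive definite of rank two, and integrating the Gaussian along the single unbounded (parabolic) direction produces exactly the factor $\tfrac{1}{4\pi\,\mathrm{Im}(\tau)}$ multiplying $\iota(\Theta_{\mathrm{Gr}_2^{W(P)}V_\mathbb{Z}})$, normalized by $r_1/\deg(Q_3)$. When $N(P)^2\neq 0$ the largest block has size three, $\mathrm{Gr}_3^{W(P)}V_\mathbb{Z}$ vanishes because $h^{2,0}=1$, and $\mathrm{Gr}_4^{W(P)}V_\mathbb{Z}\cong\mathrm{Gr}_0^{W(P)}V_\mathbb{Z}$ is a rank--one lattice spanning a hyperbolic plane under $N(P)^2$; the transverse definite lattice $\mathrm{Gr}_{2,\mathrm{prim}}^{W(P)}V_\mathbb{Z}$ contributes its holomorphic theta, while the integral over the two isotropic directions is a regularized indefinite--theta integral of signature $(1,1)$ whose non--holomorphic part is precisely the Eichler integral in the statement, with the weight--$\tfrac12$ theta $\Theta_{\mathrm{Gr}_4^{W(P)}V_\mathbb{Z}}$ entering as its weight--$\tfrac32$ shadow. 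Finally, to upgrade the equality of anti--holomorphic derivatives to the full identity, I would observe that $Z_\mathbb{V}(\tau)-\sum_P Z^-_{\mathbb{V},P}(\tau)$ is then a holomorphic modular form, and check that its holomorphic Fourier coefficients reproduce $Z^+_\mathbb{V}(\tau)$: the positive ones equal the Noether--Lefschetz degrees by the interior Kudla--Millson computation (the special points lie in a fixed compact part of $S$), and the constant term equals $-\deg(\overline{\mathcal{L}})$ by the defining property of Deligne's canonical extension, the $Z^-_{\mathbb{V},P}$ being non--holomorphic. I expect the main obstacle to be the third and fourth steps: making the interchange of $\lim_{\mathrm{Im}(w)\to\infty}$ with the theta summation and the $\tau$--integration rigorous through the $SL_2$--orbit theorem, and identifying the regularized signature--$(1,1)$ integral with the Zwegers--type completion so that it takes the stated Eichler--integral form.
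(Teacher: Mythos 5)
Your strategy is sound and would very likely succeed, but it is organized quite differently from the paper's proof. The paper never takes $\partial_{\bar\tau}$ of the theta integral; instead it works one Fourier coefficient at a time, using Kudla's Green function $\mathfrak{g}^\circ(y)_{m,\mu}$ (the $u$--integral of the transgression form $\nu^\circ$) together with the Green equation $\mathrm{dd^c}[\mathfrak{g}^\circ_{\mathbb{V}}(y)_{m,\mu}]+\delta_{\mathrm{NL}_\mathbb{V}(m)_\mu}=[\Theta^\circ_\mathbb{V}(y)_{m,\mu}+\Omega\,\delta_{(m,\mu)=(0,0)}]$ and Stokes on $S$ minus small disks; this produces, in a single step, both the holomorphic coefficients ($\deg\mathrm{NL}_\mathbb{V}(m)_\mu$ and $-\deg\overline{\mathcal L}$) and the boundary corrections $-\mathrm{res}_P\,\partial\mathfrak{g}^\circ_\mathbb{V}(y)_{m,\mu}$, so there is no separate "holomorphic projection'' stage. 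Your version separates the two: the $\bar\partial_\tau$--transgression plus Stokes localizes the non--holomorphicity at the cusps, and then a $y\to\infty$ limit recovers the holomorphic part. What your route buys is a cleaner conceptual split (shadow at the cusps versus holomorphic generating series in the interior); what the paper's route buys is that the entire identity reduces to one local residue computation per cusp, carried out by exactly the chain you describe for the boundary integrals (replace $\mathbb V$ by its nilpotent orbit, then by the $\mathbb R$--split nilpotent orbit via the $\mathrm{SL}_2$--orbit theorem, then Poisson--sum along $W_1$ resp.\ $W_0$, with the constants $r_k$, $\deg(Q_3)$, $\mathrm{Vol}(\mathrm{Gr}_4^W V_\mathbb{Z})$ appearing as discriminants of the pairings $\mathrm{Gr}^W_{3,1}Q$, $\mathrm{Gr}^W_{4,0}Q$).

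Two points in your sketch deserve a warning. First, the final step is not a purely "interior'' computation: as $y\to\infty$ the Schwartz factor $e^{-2\pi yh(s_v)}$ does \emph{not} decay near a puncture for $v\in W_2$ (there $h(s_v)\sim |a|^2/\mathrm{Im}(z)\to 0$), so mass can a priori escape to the boundary; ruling this out requires the same Poisson--summation argument with the key vanishing $\mathcal F\phi(0)=0$ that controls the boundary integrals, so you cannot quote the compact--base Kudla--Millson computation off the shelf. Second, a small factual slip: for a type II degeneration $\mathrm{Gr}_3^{W(P)}V_\mathbb{Z}$ is a rank--two lattice carrying the \emph{alternating} form $Q_3(v,w)=Q(v,Nw)$ polarizing an odd--weight Hodge structure (whence $\deg(Q_3)$), not a positive definite one, and the Poisson summation there is over the rank--two lattice $W_1\cap V_\mathbb{Z}$ rather than a single unbounded direction; neither affects the shape of the answer.
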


\begin{remark}\label{remark:hyp_intro}
Hypothesis \ref{hypothesis:hyp_on_V} is very mild: let $\mathbb{V}$ be an arbitrary $\mathbb{Z}$--PVHS of weight two with $h^{2,0}=1$ such that $\mathcal{V}_{\mathbb{Q}s}$ is a simple $\pi_1(S,s)$--module (recall that the category of polarizable $\mathbb{Q}$--VHS over $S$ is semisimple \cite[Cor. 13]{PetersSteenbrink}). Since the monodromy of a $\mathbb{Z}$-PVHS on the punctured disk is quasi--unipotent \cite[Lemma (4.5)]{Schmid}, one can guarantee that \ref{hypothesis:hyp_on_V} holds by picking a finite index even sublattice  $\mathcal{V}_\mathbb{Z}$--module and passing to an appropriate finite cover of $S$ so that the local monodromies around $\overline{S} \backslash S$ are unipotent (note that $\mathbb{V}$ extends across any point in $\overline{S} \backslash S$ with trivial monodromy by \cite[Cor. (4.11)]{Schmid}). 
\end{remark}

\begin{remark}
If $\mathbb{V}$  is the PVHS associated with a polarized family $\mathcal{X}$ of K3 surfaces parametrized by $S$, we can (after replacing $S$ by a finite cover if necessary) interpret the non-holomorphic terms $Z_{\mathbb{V},P}^-$ in terms of Hodge classes in the irreducible components of the singular fibers of a semistable model of $\mathcal{X}$. This follows from the Clemens--Schmid exact sequence (see e.g. \cite{Morrison}). A similar remark applies if $\mathbb{V}_\mathbb{Q} = (\mathcal{V}_\mathbb{Q},Q,\mathcal{F}^\bullet)$ appears as a direct summand of the PVHS naturally attached to a polarized family of non--singular projective surfaces parametrized by $S$.
\end{remark}

\begin{remark}
Let 
$$
\mathbb{G}_2(q) = -\frac{1}{24}+\sum_{n \geq 1} \sigma_1(n) q^n, \quad \sigma_1(n):=\sum_{d|n} d.
$$
Then $\mathbb{G}_2^*(\tau):=\mathbb{G}_2(q)+(8\pi y)^{-1}$ is a (non--holomorphic) modular form of weight $2$ for the full modular group $\mathrm{SL}_2(\mathbb{Z})$ (see \cite[eqs. (17) and (21)]{Zagier123}); moreover, the operator
$$
f \mapsto q \frac{d}{dq}f + 2k \mathbb{G}_2(q) \cdot f
$$
sends modular forms of weight $k$ to modular forms of weight $k+2$ 

\cite[\textsection 5.1]{Zagier123}. Thus in Theorem \ref{thm:main_thm_intro} we still obtain a $\rho_{\mathcal{V}_\mathbb{Z}}$-valued modular form if we replace any term $Z_{\mathbb{V},P}^-(\tau)$ associated with a point $P$ of type II with any of the holomorphic expressions
$$
- 2\frac{r_1(V_\mathbb{Z},N)}{\deg(Q_3)} \mathbb{G}_2(q) \Theta_{\mathrm{Gr}_2^W V_\mathbb{Z}}(\tau)
$$
or
$$
\frac{r_1(V_\mathbb{Z},N)}{\deg(Q_3)}  \frac{2}{\mathrm{rk}(\mathcal{V}_\mathbb{Z})-4} \cdot q \frac{d}{dq}\Theta_{\mathrm{Gr}_2^W V_\mathbb{Z}}(\tau).
$$
Similarly, the Eichler integral of $\Theta_{\mathrm{Gr}_4^{W(P)} V_\mathbb{Z}}$ appearing in the contribution $Z^-_{\mathbb{V},P}$ of a type III degeneration is the non--holomorphic part of a weight $3/2$--Eisenstein series defined in \cite{Zagier} and so we may replace any term $Z_{\mathbb{V},P}^-(\tau)$ associated with a degeneration of type III with a holomorphic expression involving the holomorphic part of Zagier's Eisenstein series.
\end{remark}

\subsection{Relation with other works}

In the setting of the PVHS parametrized by Shimura varieties of orthogonal or unitary type, several recent works address the explicit computation of correction terms to the generating series of special divisors coming from an appropriate toroidal compactification: the case of modular curves was treated by Funke in \cite{Funke} and related computations for toroidal compactifications of unitary Shimura varieties where only type II degenerations appear are in \cite{KudlaHoward1}. Recently Bruinier and Zemel \cite{BruinierZemel} have proved a result for special divisors on orthogonal Shimura varieties that is similar to the modularity statement in Theorem \ref{thm:main_thm_intro}. Their proof involves studying the asymptotic behaviour of Borcherds lifts along components of a toroidal compactification. A different proof (and refinement) using more geometric methods has been very recently obtained by Engel, Greer and Tayou in  \cite{https://doi.org/10.48550/arxiv.2301.05982}. Our paper contributes the explicit description of boundary terms in terms of limiting mixed Hodge structures and, like \cite{https://doi.org/10.48550/arxiv.2301.05982}, it also clarifies the rationality properties of coefficients along type III contributions.

\subsection{Strategy of proof} In contrast to the above works, this paper does not rely on the theory of toroidal compactifications of Shimura varieties. Instead, our proofs are analytic in nature and use Schmid's results on degenerations of Hodge structure, particularly his  characterization of the weight filtration by growth of the Hodge norm and his nilpotent and $\mathrm{SL}_2$--orbit theorems. For a fixed point $P \in \overline{S} \backslash S$ these results imply that in a neighbourhood of $P$ the variation $\mathbb{V}$ is well-approximated by a special type of nilpotent orbit $\tilde{\mathbb{V}}^{\mathrm{nilp}}$. We prove Theorem \ref{thm:intro_1} by showing that $\Theta_\mathbb{V}(\tau)_\mu - \Theta_{\tilde{\mathbb{V}}^{\mathrm{nilp}}}(\tau)_\mu$ and $\Theta_{\tilde{\mathbb{V}}^{\mathrm{nilp}}}(\tau)_\mu$ are both locally integrable around $P$. The proof of Theorem \ref{thm:main_thm_intro} reduces to the computation of the residue of certain canonical Green functions for $\mathrm{NL}_\mathbb{V}(m)_\mu$ along $P \in \overline{S} \backslash S$. We show that the residue agrees with that of the corresponding Green function for $\tilde{\mathbb{V}}^{\mathrm{nilp}}$, which can be computed exactly thanks to the explicit nature of nilpotent orbits.

One advantage of our methods over the use of toroidal compactifications that originally motivated the author's interest is that the theta series $\Theta_\mathbb{V}(\tau)$ and Schmid's theorems are available and in definitive form for arbitrary PVHS of weight two over the complement of a normal crossing divisor in a higher--dimensional base. Schmid's several variables $\mathrm{SL}_2$-orbit theorem \cite{CKS} approximates a degeneration of Hodge structure in $n$ variables by an $(n-1)$--dimensional pencil of one--dimensional nilpotent orbits; in particular, to use his theorem to study the boundary behaviour of $\Theta_\mathbb{V}(\tau)$ one must first understand $\Theta_{\mathbb{V}}(\tau)$ along one--variable degenerations. In future work, the author intends to develop the methods in this paper to address the conjectural (mock) modularity of generating series of Noether--Lefschetz loci for certain VHS with $h^{2,0}>1$ that are not naturally parametrized by Shimura varieties; for a particularly interesting example see \cite{https://doi.org/10.48550/arxiv.2007.03037}.

\subsection{Acknowledgements} The author would like to thank Nicolas Bergeron and Keerthi Madapusi for their interest and feedback, Salim Tayou for conversations regarding boundary behaviour of special divisors  and Richard Thomas for his questions regarding modularity of Noether--Lefschetz loci for variations with $h^{2,0}>1$.

\section{Weight two PVHS over a complex algebraic curve}  In this section we briefly review some relevant facts on variations of Hodge structure over a one--dimensional base. We will only consider integral polarized variations of weight two with Hodge numbers $(1,n,1)$ for some positive integer $n$.

Throughout the paper we fix a connected compact Riemann surface $\overline{S}$ and a finite collection of points $P_1,\ldots, P_r \in \overline{S}$, and write 
$$
S=\overline{S}-\{P_1,\ldots,P_r\}.
$$
Sections \ref{subsection:VHS_definitions} and \ref{subsection:local_monodromy} collect definitions and known facts on degenerations of Hodge structure and approximation by nilpotent orbits. We refer the reader to \cite{Schmid, CKS} for proofs; our exposition follows closely Hain's account \cite{Hain}. Sections \ref{subsection:type_ii_degenerations} and \ref{subsection:type_iii_degenerations} compute some nilpotent orbits explicitly. The formulas in these Sections will be used later to understand the behaviour of Kudla--Millson forms around the points $P \in \overline{S} \backslash S$.

\subsection{Definitions}\label{subsection:VHS_definitions} Consider an integral polarized variation of Hodge structure ($\mathbb{Z}$-PVHS) $\mathbb{V} \to S$ of weight two over $S$.  Here $\mathbb{V}$ is a triple $(\mathcal{V}_\mathbb{Z},Q,\mathcal{F}^\bullet)$ consisting of:
\begin{itemize}
\item a local system $\mathcal{V}_\mathbb{Z}$ of free and finite rank abelian groups over $S$,
\item a (locally constant) non-degenerate symmetric bilinear form
$$
Q: \mathcal{V}_\mathbb{Z} \times \mathcal{V}_\mathbb{Z} \to \underline{\mathbb{Z}},
$$
\item a descending filtration 
$$
\mathcal{V} = \mathcal{F}^0 \supset \mathcal{F}^1 \supset \mathcal{F}^2
$$
of the flat complex vector bundle $\mathcal{V}:=\mathcal{V}_\mathbb{Z} \otimes \mathcal{O}_S$ by holomorphic vector bundles $\mathcal{F}^k$ that are locally direct summands,
\end{itemize}
such that the fiber $\mathbb{V}_s = ((\mathcal{V}_\mathbb{Z})_s,Q_s,\mathcal{F}^\bullet_s)$ over any $s \in S$ defines a polarized Hodge structure of weight two. We assume that
$$
h^{2,0} = \mathrm{rk} \mathcal{F}^2 = 1,
$$
so that $\mathcal{F}^2$ is a holomorphic line bundle over $S$ that we denote by $\mathcal{L}$. 

For each $s \in S$ we have the Hodge decomposition
\begin{equation}
\mathcal{V}_{s} = \oplus_{p+q=2} \mathcal{V}_s^{p,q}.
\end{equation}
Let $C_s \in \mathrm{End}(\mathcal{V}_{s})$ denote the Weil operator: it acts as the identity on $\mathcal{V}^{1,1}_s$ and as $-1$ on $\mathcal{V}_s^{2,0} \oplus \mathcal{V}_s^{0,2}$. The polarization induces a hermitian metric $\|\cdot\|_{\mathcal{V},s}$ on $\mathcal{V}_s$. This is the Hodge metric, defined by
\begin{equation}
\|v\|_{\mathcal{V},s}^2 = Q(C_s v, \overline{v}), \quad v \in \mathcal{V}_s.
\end{equation}
When only one PVHS is being considered, we will suppress $\mathcal{V}$ from the notation and denote the Hodge norm of $v \in \mathcal{V}_s$ simply by $\|v\|_s^2$.

The polarization $Q$ induces an isomorphism $\mathcal{V} \simeq \mathcal{V}^\vee$ sending a vector $v \in \mathcal{V}_s$ to the linear functional $v' \mapsto Q(v',v)$. Composing this isomorphism with the canonical surjection $\mathcal{V}^\vee \to \mathcal{L}^\vee$ dual to the inclusion $\mathcal{L} \subset \mathcal{V}$ gives an isomorphism $\mathcal{V}/\mathcal{F}^1 \simeq \mathcal{L}^\vee$. In particular, to a section $v \in \mathrm{H}^0(U,\mathcal{V})$ defined over $U \subset S$ corresponds a section $s_v \in \mathrm{H}^0(U,\mathcal{L}^\vee)$. It will be convenient to define
\begin{equation}
h(s_v) = 2\|s_v\|^2_{\mathcal{L}^\vee}.
\end{equation}
Writing $v_z=\Sigma v^{p,q}_z$ for the Hodge decomposition of $v_z \in \mathcal{V}_z$, the value of $h(s_v)$ at $z \in U$ is given by
\begin{equation}
h(s_v)_z = 2\|v_z^{2,0}\|^2_{\mathcal{V}}=  -2Q(v^{2,0}_z,\overline{v^{2,0}_z}).
\end{equation}

%Local monodromy and mixed Hodge structure
\subsection{Local monodromy and limit mixed Hodge structure} \label{subsection:local_monodromy}

The asymptotic behaviour of $\mathbb{V} \to S$ around each of the points $P \in \overline{S} \backslash S$ can be described precisely in terms of limit mixed Hodge structures using the results of Schmid in \cite{Schmid}. We briefly recall the results that we will use.

Let $\Delta=\{t \in \mathbb{C} \ | \ |t|<1\}$ denote the open unit disk in $\mathbb{C}$ and let $\Delta^*=\Delta - \{0\}$ be the punctured open unit disk. Consider a polarized variation of Hodge structure $\mathbb{V}=(\mathcal{V}_\mathbb{Z},Q,\mathcal{F}^\bullet)$ of weight two over $\Delta^*$ with $h^{2,0}=1$. 

\subsubsection{Local monodromy and weight filtration} For $s \in \Delta^*$, let $\mathcal{V}_{\mathbb{Z}s}$ be the fiber of $\mathcal{V}_\mathbb{Z}$ over $s$. This fiber carries an action of the fundamental group $\pi_1(\Delta^*,s)$. We denote by
$$
T \in \mathrm{O}(\mathcal{V}_{\mathbb{Z}s},Q) \subset \mathrm{GL}(\mathcal{V}_{\mathbb{Z} s})
$$
the monodromy operator, that is, the image in $ \mathrm{GL}(\mathcal{V}_{\mathbb{Z} s})$ of the generator of $\pi_1(\Delta^*,s)$ defined by the loop $t \mapsto s e^{2\pi it}$ for $t \in [0,1]$. Then (\cite[Thm. 6.1]{Schmid}) $T$ is quasi-unipotent, i.e. there exist positive integers $e$ and $M$ such that
$$
(T^e-1)^M=0.
$$
Passing to a cover of $\Delta^*$ of degree $e$, we may assume that $e=1$. Moreover, we can take $M \leq 3$ (\cite[Thm. 6.1]{Schmid}) and, if $T=1$, then the polarized variation of Hodge structure $(\mathcal{V}_\mathbb{Z},Q,\mathcal{F}^\bullet)$ can be extended to the open unit disk $\Delta$ (\cite[Cor. 4.11]{Schmid}). 

Let
$$
N=\log T =\sum_{k=1}^{M-1} (-1)^{k-1} \frac{(T-1)^k}{k}.
$$
Then $N^3=0$. If $T \neq 1$, this leaves two possibilities:  
\begin{itemize}
\item $N^2 = 0$ (Type II degeneration), and
\item $N^2 \neq 0$ (Type III degeneration).
\end{itemize}

To the nilpotent endomorphism $N$ of $\mathcal{V}_{\mathbb{Q} s}=\mathcal{V}_{\mathbb{Z}s} \otimes \mathbb{Q}$ corresponds an increasing filtration $W_\bullet(N)$ of $\mathcal{V}_{\mathbb{Q}s}$ by $\mathbb{Q}$-vector spaces called the weight filtration. It is the unique filtration
$$
\cdots \subseteq W_{k}(N) \subseteq W_{k+1}(N) \subseteq \cdots
$$
of $\mathcal{V}_{\mathbb{Z}s} \otimes \mathbb{Q}$ satisfying
$
N \cdot W_{k}(N) \subseteq W_{k-2}(N)
$
and such that 
$$
N^{k} : W_{k}(N)/W_{k-1}(N) \to W_{-k}(N)/W_{-k-1}(N)
$$
is an isomorphism. We write 
$$
W_k=W_{k-2}(N)
$$
 for the (shifted) weight filtration of $N$. Since $N^3=0$, this filtration satisfies $W_{-1}=0$ and $W_4=\mathcal{V}_{\mathbb{Z}s} \otimes \mathbb{Q}$. Abusing notation, we denote by $W_\bullet$ the corresponding filtration of the local system $\mathcal{V}_\mathbb{Q}$: 
$$
0 = W_{-1} \subseteq W_0 \subseteq W_1 \subseteq W_2 \subseteq W_3 \subseteq W_4 = \mathcal{V}_\mathbb{Q}.
$$
Since $N=\log T \in \mathfrak{so}(\mathcal{V}_{\mathbb{Q}s},Q)$, we have
\begin{equation} \label{eq:polarization_and_quadratic_form}
Q(Nv,w) = - Q(v,Nw).
\end{equation}
It follows that the weight filtration $W_\bullet$ is self-dual: writing $W_k^\perp$ for the orthogonal complement of $W_k$ under $Q$, we have
$$
W_k^\perp = W_{3-k}.
$$
Moreover, the quotients
$$
\mathrm{Gr}_k^W \mathcal{V}_\mathbb{Q} := W_k/W_{k-1}
$$
carry canonical bilinear forms $Q_k$ defined as follows: if $k \geq 2$ and $\tilde{v},\tilde{w} \in \mathrm{Gr}_k^W \mathcal{V}_\mathbb{Q}$ are represented by $v,w \in W_k$, we define
\begin{equation} \label{eq:Q_k_definition}
Q_k(\tilde{v},\tilde{w}) = Q(v,N^{k-2}w).
\end{equation}
If $k <2$, then we define $Q_k$ so that the isomorphism $N^{2-k}: \mathrm{Gr}_{4-k}^W \mathcal{V}_\mathbb{Q} \to \mathrm{Gr}_{k}^W \mathcal{V}_\mathbb{Q}$ is an isometry (\cite[Lemma 6.4]{Schmid}). 
%Canonical extension and limit mixed Hodge structure
\subsubsection{Canonical extension and limit MHS} \label{section:canonical extension} The vector bundle $\mathcal{V} = \mathcal{V}_{\mathbb{Z}} \otimes \mathcal{O}_{\Delta^*}$ carries a canonical flat connection $\nabla$. Let us fix a flat multi-valued basis $v_1,\ldots,v_{n+2}$ of $\mathcal{V}_\mathbb{Q}$. We assume that this basis is chosen so as to provide a splitting of the weight filtration: that is,
$$
W_k = \langle v_1,\ldots, v_{\mathrm{dim} W_k} \rangle
$$
for $0 \leq k \leq 4$. Define a new basis $(\tilde{v}_i)$ of $\mathcal{V}$ by
$$
\tilde{v}_i(q) = \exp\left(\frac{i}{2\pi} \log t \cdot N \right)v_i(q)
$$
Note that parallel translation of along a positively oriented circle changes $v_i$ to $Tv_i$ and $\exp\left(\frac{i}{2\pi} \log t \cdot N \right)$ to
$$
\exp\left(\frac{i}{2\pi} (\log t + 2\pi i) \cdot N \right) = \exp\left(\frac{i}{2\pi} \log t \cdot N \right) \cdot T^{-1}.
$$
It follows that the basis $(\tilde{v}_i)$ is single-valued and so it defines a trivialization $\mathcal{O}_{\Delta^*}^{n+2} \simeq \mathcal{V}$ over $\Delta^*$. The canonical extension $\tilde{\mathcal{V}}$ of $\mathcal{V}$ is defined to be the extension of  $\mathcal{V}$ as a constant bundle over $\Delta$, that is, the extension corresponding to $\mathcal{O}^{n+2}_\Delta$ under the above isomorphism. We denote by $\tilde{\mathcal{V}}_0$ its fiber over $0 \in \Delta$. By \eqref{eq:polarization_and_quadratic_form}, we have
$$
Q(\tilde{v}_i(q),\tilde{v}_j(q)) = Q(v_i,v_j)
$$
and so the polarization $Q$ extends to a symmetric bilinear form on the fiber $\tilde{\mathcal{V}}_0$ that we still denote by $Q$.

Schmid's nilpotent orbit theorem \cite[Thm. 4.9]{Schmid} states that the Hodge filtration $\mathcal{F}^\bullet$ extends to a filtration $\tilde{\mathcal{F}}^\bullet$ of the canonical extension $\tilde{\mathcal{V}}$ by locally direct factors. We write
$
F_\mathrm{lim}^\bullet = \tilde{\mathcal{F}}_0
$
for the limit Hodge filtration, i.e. the corresponding filtration of $\tilde{\mathcal{V}}_0$. Then we have
\begin{equation} \label{eq:general_properties_limit_Hodge_filtration}
\begin{split}
Q(F^1_{\mathrm{lim}},F^2_{\mathrm{lim}}) &= 0 \\
N \cdot F^2_{\mathrm{lim}} & \subseteq F^1_{\mathrm{lim}}.
\end{split}
\end{equation}
Moreover, the basis $\tilde{v}_1(0),\ldots,\tilde{v}_{n+2}(0)$ defines a $\mathbb{Z}$-structure on the fiber $\tilde{\mathcal{V}}_0$ that we denote by $V_\mathbb{Z}$, and the weight and limit Hodge filtrations
$$
(W_\bullet, F^{\bullet}_{\mathrm{lim}})
$$
define a mixed $\mathbb{Q}$-Hodge structure on $V:=V_\mathbb{Z} \otimes \mathbb{Q}$.\mynote{I should check (and possibly mention?) that $V_\mathbb{C}$ is naturally identified with the space of flat multi--valued sections of $\mathcal{V}$, that is flat global sections of $\pi^*\mathcal{V}$ where $\pi: \mathbb{H} \to \Delta^*$ is the uniformising map. This is how Sabbah--Schnell phrase things.} 
Together with the action of $N$ and the extension of $Q$ to $V$, these filtrations define a polarized mixed $\mathbb{Q}$-Hodge structure. More precisely, %(\cite[Appendix to Lecture 10]{GriffithsLectures}),
we have (cf. \cite[Def. (2.26)]{CKS})
\begin{enumerate}
\item[i)] $(V,W_\bullet, F^{\bullet}_{\mathrm{lim}})$ is a mixed $\mathbb{Q}$-Hodge structure satisfying \eqref{eq:general_properties_limit_Hodge_filtration}.
\item[ii)] %The endomorphism $N \in \mathrm{End}(V)$ is a map of mixed Hodge structures of type $(-1,-1)$.
$W_\bullet=W_\bullet(N)[-2]$.
\item[iii)] Define 
$$
P_2 = \ker (N: \mathrm{Gr}_2^W V \to  \mathrm{Gr}_0^W V) \subseteq \mathrm{Gr}_2^W V
$$
and, for $k \neq 2$, set $P_k = \mathrm{Gr}_k^W V$. Then
\begin{equation}
\mathrm{Gr}_2^W V = P_2 \oplus N P_4
\end{equation}
and the restriction of the bilinear form $Q_k$ in \eqref{eq:Q_k_definition} to $P_k$ defines a polarized $\mathbb{Q}$-Hodge structure of weight $k$. 
\end{enumerate}

 We write
$$
h^{p,q}_{\mathrm{lim}} = \Gr^{p}_{F_{\mathrm{lim}}} \Gr_{p+q}^W V
$$
for the Hodge numbers of the limiting mixed Hodge structure.

%Nilpotent orbit
\subsubsection{Nilpotent orbit} Using the isomorphism $\mathcal{O}^{n+2}_\Delta \simeq \tilde{\mathcal{V}}$, we extend the filtration $F_{\mathrm{lim}}^{\bullet}$ of $\tilde{\mathcal{V}}_0$ to a filtration, still denoted by $F_{\mathrm{lim}}^\bullet$, of $\tilde{\mathcal{V}}$. The corresponding nilpotent orbit is then given by the filtration \mynote{This can be a little confusing. It becomes easier to understand using Deligne's connection
$$
\nabla^c = \nabla - \frac{i}{2\pi} N \frac{dt}{t}.
$$
The point is that $\nabla^c$ is a flat connection without monodromy, and the canonical extension is just the extension of $\tilde{\mathcal{V}}$ to a $\nabla^c$-flat vector bundle over $\Delta$. The expression ``Extending the filtration $F_\mathrm{lim}^\bullet \subset V$ to a filtration, still denoted by ...'' means just that $F_\mathrm{lim}^\bullet$ is a filtration of a fiber of $\tilde{\mathcal{V}}$, and so can be extended in a unique way to a filtration of $\tilde{\mathcal{V}}$ by $\nabla^c$-flat subbundles (this is what Deligne \cite{DeligneLocalBehaviour} calls ``constant''). Since the $F_\mathrm{lim}^\bullet$ are $\nabla^c$-flat, the $\mathcal{F}_{\mathrm{nilp}}^\bullet$ defined above are $\nabla$-flat, i.e. they are preserved by the monodromy and so they are well--defined bundles over $\Delta^*$.
}
\begin{equation}
\mathcal{F}_{\mathrm{nilp}}^{\bullet} := \exp \left(\frac{1}{2\pi i} \log t \cdot N \right) F^{\bullet}_{\mathrm{lim}} \subset \mathcal{V}.
\end{equation}
Using the uniformisation of $\Delta^*$ via the exponential map
$$
\pi: \mathbb{H} \to \Delta^*, \quad z \mapsto t:=e^{2\pi i z}.
$$
we may write
\begin{equation}
 \mathcal{F}_{\mathrm{nilp}}^k = e^{zN} F_\mathrm{lim}^k.
\end{equation}

In a small enough neighbourhood of $0$, the triple 
\begin{equation}
\mathbb{V}^{\mathrm{nilp}} := (\underline{V_\mathbb{Z}},Q,\mathcal{F}^{\bullet}_{\mathrm{nilp}})
\end{equation}
defines a variation of Hodge structures of weight two polarized by $Q$ with the same Hodge numbers as $\mathbb{V}$.
 
The variation $\mathbb{V}^{\mathrm{nilp}}$ approximates $\mathbb{V}$ in the following sense. Let us denote by $\mathbb{D}$ the period domain parametrising Hodge structures on $V_\mathbb{R}$ polarised by $Q$ with $h^{2,0}=1$: it is the hermitian symmetric domain attached to the Lie group 
$$
G_\mathbb{R} := \mathrm{Aut}(V_\mathbb{R},Q).
$$
We write $\mathbb{D}^\vee$ for the compact dual of $\mathbb{D}$; it contains $\mathbb{D}$ and is a homogeneous complex manifold for $G_\mathbb{C}$. The pullback $\pi^* \mathbb{V}$ to $\mathbb{H}$ of the PVHS $\mathbb{V}$ defines a holomorphic map
$$
\Phi_\mathbb{V}: \mathbb{H} \to \mathbb{D}
$$
satisfying $\Phi_\mathbb{V}(z+1) = e^N \Phi_\mathbb{V}(z)$. Since $e^{zN}$ belongs to $G_\mathbb{C}$, we have $e^{-zN}\Phi_\mathbb{V}(z) \in \mathbb{D}^\vee$ for every $z \in \mathbb{H}$. This gives a holomorphic map
$$
\tilde{\Psi}_\mathbb{V}: \mathbb{H} \to \mathbb{D}^\vee, \qquad \tilde{\Psi}_\mathbb{V}(z)= e^{-zN}\Phi_\mathbb{V}(z)
$$
that is invariant under $z \mapsto z+1$, and so $\tilde{\Psi}_{\mathbb{V}}(z)$ induces a holomorphic map
$$
\Psi_\mathbb{V} : \Delta^* \to \mathbb{D}^\vee, \quad \Psi_\mathbb{V}(e^{2\pi i z}) := e^{-zN}\Phi_\mathbb{V}(z).
$$
Schmid's nilpotent orbit theorem states that $\Psi_\mathbb{V}$ extends to a holomorphic map defined on $\Delta$ \cite[\textsection 2.3]{DeligneLocalBehaviour}. For the nilpotent orbit $\mathbb{V}^{\mathrm{nilp}}$ this map is constant with
$$
\Psi_{\mathbb{V}^{\mathrm{nilp}}}(t) = \Psi_\mathbb{V}(0) = F^\bullet_{\mathrm{lim}}.
$$
The map $\Psi_{\mathbb{V}}$ can be written as
$$
\Psi_\mathbb{V}(t) = \psi_{\mathbb{V}}(t) \cdot F^\bullet_{\mathrm{lim}}
$$
with 
$$
\psi_\mathbb{V}: \Delta \to G_\mathbb{C}
$$
a holomorphic map satisfying $\psi_{\mathbb{V}}(0)=1$ (for a canonical choice of $\psi_\mathbb{V}$, see \cite[(2.5)]{CattaniKaplanVHS}). Thus we may write 
$$
\Phi_\mathbb{V}(z) = e^{zN} \Psi_{\mathbb{V}}(e^{2\pi i z}) = e^{zN} \psi_\mathbb{V}(t) F^\bullet_{\mathrm{lim}} = e^{zN} \psi_\mathbb{V}(t) e^{-zN}\Phi_{\mathbb{V}^\mathrm{nilp}}(z).
$$
Equivalently, the Hodge filtrations of $\mathbb{V}$ and $\mathbb{V}^{\mathrm{nilp}}$ satisfy
\begin{equation} \label{eq:map_relating_V_and_nilp_orbit}
\mathcal{F}^\bullet_t = e^{zN}\psi_{\mathbb{V}}(t) e^{-zN} \mathcal{F}_{\mathrm{nilp},t}^\bullet, \qquad t \in \Delta^*.
\end{equation}
Given a norm $|\cdot|$ on $\mathrm{End}(V_\mathbb{C})$ and assuming that $|\mathrm{Re}(z)| \leq 1/2$, we have the trivial estimate
\begin{equation} \label{eq:dist_to_nilp_estimate}
|e^{zN} \psi_\mathbb{V}(t) e^{-zN}-1| = O(|t| (\log |t|)^{2k})
\end{equation}
for some positive integer $k$.

%Schmid's theorems and estimates
\subsubsection{Approximation and Hodge norm estimates} We also need Schmid's estimates for the Hodge norm: fix an angular sector
$$
U = U(t_0,\epsilon) = \{ t \in \Delta^* | 0 < \arg(t-t_0) < 2\pi-\epsilon \}
$$
of $\Delta^*$ and let $v \in \mathcal{V}_\mathbb{C}|_U$. If $v \in W_k - W_{k-1}$, then
$$
\|v\|^2_t \sim (-\log |t|)^{k-2},
$$
uniformly on $U$ (\cite[Thm. 6.6']{Schmid}). In \ref{subsection:Integrability_Theta''} we will state a  more precise version that also gives bounds for the derivatives of $\|v\|_t^2$.

\subsection{Type II degenerations} \label{subsection:type_ii_degenerations} We say that a degeneration is of type II if $N^2=0$ but $N$ is non--trivial. For this type of degeneration, the Hodge numbers of the limit mixed Hodge structure are
$$
h^{1,0}_{\mathrm{lim}}=h^{0,1}_{\mathrm{lim}}=h^{2,1}_{\mathrm{lim}}=h^{1,2}_{\mathrm{lim}}=1, \quad h^{1,1}_{\mathrm{lim}}=\mathrm{dim} V_\mathbb{C} - 4
$$
(all other Hodge numbers are zero) and the weight filtration is
\begin{align*}
W_0 &= 0 \\
W_1 &= \image N \\
W_2 &= \ker N \\
W_3 &=W_4=V.
\end{align*}

Below we compute explicitly the Hodge norms $\|v\|^2$ and Chern form $\Omega$ associated with the corresponding nilpotent orbit; these computations will allow us to derive explicit expressions for Kudla--Millson forms for such degenerations. Let us assume that the limit mixed Hodge structure is $\mathbb{R}$--split, i.e. that $V_\mathbb{R}$ is the direct sum of pure Hodge structures. This will suffice for our intended application.

\subsubsection{} Let $V_\mathbb{C} = \oplus_{a,b} I^{a,b}$ ($0 \leq a,b \leq 2)$ denote the canonical bigrading defined by Deligne \cite[(2.12)]{CKS}. Since we assume that $V$ is $\mathbb{R}$--split, this bigrading is simply given by
\begin{equation} \label{eq:type_II_Deligne_splitting}
I^{a,b} = F^a_{\mathrm{lim}} \cap \overline{F^b_\mathrm{lim}} \cap W_{a+b,\mathbb{C}}.
\end{equation}
Then we have
$$
W_{k,\mathbb{C}} = \oplus_{a+b \leq k} I^{a,b}, \quad F^p_\mathrm{lim} = \oplus_{a \geq p} I^{a,b}
$$
and $I^{b,a}=\overline{I^{a,b}}$. Let us define
$$
V_k = (\oplus_{a+b=k} I^{a,b}) \cap W_{k,\mathbb{R}},
$$
so that $V_\mathbb{R} =\oplus_{1 \leq k \leq 3} V_k$. Via the isomorphism
$$
V_k \simeq W_{k,\mathbb{R}}/W_{k-1,\mathbb{R}} = \mathrm{Gr}_k^W V_\mathbb{R}, 
$$
the Hodge filtration on $\mathrm{Gr}_k^W V_\mathbb{R}$ induces on $V_k$ a pure $\mathbb{R}$-Hodge structure of weight $k$ (with Hodge filtration $F^\bullet_{\mathrm{lim}} \cap V_{k,\mathbb{C}}$).

Since the form $Q(\cdot, N \cdot)$ polarizes the Hodge structure on $\mathrm{Gr}_3^W V_\mathbb{R} \simeq V_3$, we can find a vector $e^{2,1} \in I^{2,1}$ such that
$$
iQ(e^{2,1},N\overline{e^{2,1}}) =1.
$$
We fix such a vector and write $e^{1,2}=\overline{e^{2,1}} \in I^{1,2}$, $e^{1,0} = Ne^{2,1} \in I^{1,0}$ and $e^{0,1} = \overline{e^{1,0}} \in I^{0,1}$. Then $\{e^{2,1}, e^{1,2}\}$ is a basis for $V_3$ and $\{e^{1,0}, e^{0,1}\}$ is a basis for $V_1$. Using $Q(F^2_{\mathrm{lim}},F^1_{\mathrm{lim}})=0$ and $Q(W_1,W_2)=0$, one sees that $V_2$ is orthogonal to $V_1 \oplus V_3$ and that in the basis $\{e^{2,1},e^{1,2},e^{1,0},e^{0,1}\}$ the restriction of $Q$ to $V_1 \oplus V_3$ is given by the matrix
\begin{equation} \label{eq:Q_matrix_type_II}
\begin{pmatrix} 
0 & 0 & 0 & -i \\ 0 & 0 & i & 0 \\ 0 & i & 0 & 0 \\ -i & 0 & 0 & 0
\end{pmatrix}.
\end{equation}

\subsubsection{} We now consider the nilpotent orbit: for $z \in \mathbb{H}$, we write
$$
F_z^k= e^{zN}F^k_{\mathrm{lim}}.
$$
Since $N^2=0$ and $I^{1,1} \subset \ker N$, we have $e^{zN}=1+zN$ and hence
\begin{align*}
F_z^2 &= \langle e_z^{2,0} \rangle \\
F_z^1 &= e^{zN}(F^2_\mathrm{lim} \oplus I^{1,1})=F^2_z \oplus I^{1,1},
\end{align*}
where
\begin{align*}
e_z^{2,0} &:=e^{2,1}+ze^{1,0}.  \\
\end{align*}
The Hodge norm of $e_z^{2,0}$ can be computed explicitly using \eqref{eq:Q_matrix_type_II}:
\begin{equation} \label{eq:e_z20_Hodge_norm_type_II}
\begin{split}
    \|e_z^{2,0}\|_{\mathcal{V}}^2 &= -Q(e_z^{2,0},\overline{e_z^{2,0}}) \\
    &= -Q(e^{2,1}+z e^{1,0},e^{1,2}+\overline{z}e^{0,1}) \\
    &= i(\overline{z}-z) \\
    &= 2 \mathrm{Im}(z).
\end{split}
\end{equation}

We may think of $z \mapsto e_z^{2,0}$ as a holomorphic section of the hermitian line bundle $F_z^2$. Writing $\Omega$ for its Chern form, we find
\begin{equation} \label{eq:Chern_form_type_II}
\begin{split}
\Omega &= \frac{1}{2\pi i} \partial \overline{\partial} \log \|e_z^{2,0}\|_{\mathcal{V}}^2 
= \frac{i}{8\pi} \frac{dz \wedge d\overline{z}}{\mathrm{Im}(z)^2} %=  \frac{1}{4\pi} \frac{dx dy}{y^2} %= ? \cdot \frac{dt d\overline{t}}{|t|^2 (\log |t|)^2}. 
\end{split}
\end{equation}

Using \eqref{eq:e_z20_Hodge_norm_type_II} we can compute the Hodge norm $\|v\|_\mathcal{V}^2$ of vectors $v \in V_\mathbb{R}$. Fix such $v$ and let $v_z^{p,q}$ be the components of $v \in \mathcal{V}_z = \oplus_{p,q} \mathcal{V}_z^{p,q}$. Then 
$$
v_z^{2,0} = f(z) e_z^{2,0}
$$
for some holomorphic function $f:\mathbb{H} \to \mathbb{C}$. We have $v_z^{0,2} = \overline{v_z^{2,0}}$ and hence
$$
Q(v,e_z^{2,0}) = Q(v^{0,2}_z,e_z^{2,0}) = -2\overline{f(z)}\mathrm{Im}(z).
$$
This gives
\begin{equation} \label{eq:Hodge_norm_explicit_1}
\begin{split}
h(s_v) &= -2Q(v_z^{2,0},v_z^{0,2}) \\
&= 4|f(z)|^2 \mathrm{Im}(z) \\
&= \frac{|Q(v,e_z^{2,0})|^2}{\mathrm{Im}(z)}
\end{split}
\end{equation}
and, for the Hodge norm,
\begin{equation} 
\|v\|^2_{\mathcal{V},z} = Q(v,v) + 2 h(s_v) = Q(v,v) + \frac{2|Q(v,e_z^{2,0})|^2}{\mathrm{Im}(z)}.
\end{equation}
Let us consider the special case $v \in W_{2,\mathbb{R}}$. Such a vector can be written uniquely as 
\begin{equation}
v=v_2 + ae^{1,0} + \overline{a}e^{0,1}
\end{equation}
with $v_2 \in V_2$ and a complex number $a$. Using $Q(W_1,W_2)=Q(F^1_\mathrm{lim},F^2_\mathrm{lim})=0$ we find that
$$
Q(v_2,e_z^{2,0}) = Q(v_2,e^{2,1}) + z Q(v_2,e^{1,0}) = 0
$$
and hence
$$
Q(v,e_z^{2,0}) = Q(a e^{1,0} + \overline{a}e^{0,1},e^{2,1}+z e^{1,0}) = -i\overline{a}.
$$
We conclude that for $v \in W_{2,\mathbb{R}}$ we have
\begin{equation} \label{eq:Hodge_norm_explicit_1_type_II}
h(s_v) = \frac{|a|^2}{\mathrm{Im}(z)}
\end{equation}
and
\begin{equation} \label{eq:Hodge_norm_explicit_2_type_II}
\|v\|^2_{\mathcal{V},z} = Q(v_2,v_2) + \frac{2|a|^2}{\mathrm{Im}(z)}.
\end{equation}

\subsection{Type III degenerations} \label{subsection:type_iii_degenerations} We say that a degeneration is of type III if $N^2 \neq 0$. In this case we have
$$
h^{0,0}_{\mathrm{lim}} = h^{2,2}_{\mathrm{lim}} = 1, \quad h^{1,1}_{\mathrm{lim}}=\mathrm{rank} V_\mathbb{Z}-2,
$$
and all other Hodge numbers are zero, i.e. the real mixed Hodge structure $(V_\mathbb{R},W,F)$ is Hodge--Tate. The weight filtration is
\begin{align*}
W_0 &= W_1 =  \mathrm{Im} N^2 \\
W_2 &= W_3 = \mathrm{Im} N + \ker N.
\end{align*}

We will now do some computations analogous to the ones above in the type II case. We assume again that the limit mixed Hodge structure is $\mathbb{R}$--split.

\subsubsection{} Let $V_\mathbb{C}=\oplus_{a,b} I^{a,b}$ denote Deligne's canonical bigrading \cite[(2.12)]{CKS}. For $\mathbb{R}$--split type III degenerations we have $I^{p,q} = 0$ if $p \neq q$ and
$$
I^{p,p} = F_\mathrm{lim}^p \cap \overline{F_{\mathrm{lim}}^p} \cap W_{2p,\mathbb{C}}.
$$
The bigrading satisfies
$$
W_{2k,\mathbb{C}} = \oplus_{a \leq k} I^{a,a}, \quad F^p_\mathrm{lim} = \oplus_{a \geq p} I^{a.a}.
$$
We set
$$
V_{2k} = I^{k,k} \cap W_{k,\mathbb{R}}, \quad k = 0, 1, 2.
$$
Then $V_{2k}$ is a real Hodge structure of type $(k,k)$  and  $V_\mathbb{R} = \oplus V_{2k}$.

Since the form $Q(\cdot,N^2 \cdot)$ polarizes the Hodge structure on $\mathrm{Gr}_4^W V_\mathbb{R} \simeq V_4$, we can find a vector (unique up to multiplication by $\pm 1$) $e^{2,2} \in V_4$ such that
\begin{equation} \label{eq:type_II_e22_def}
Q(e^{2,2},N^2 e^{2,2})=1.
\end{equation}
Then $Ne^{2,2} \in V_2$ and $N^2 e^{2,2} \in V_0$ and both vectors are non-zero. Since $V_0$ and $V_4$ are one--dimensional we have
$$
V_4=\langle e^{2,2} \rangle, \quad V_0 = \langle N^2 e^{2,2} \rangle.
$$

Let us define
$$
U = \ker(N:V_2 \to V_0)
$$
Under the isomorphism $V_2 \simeq \mathrm{Gr}_2^W V_\mathbb{R}$ induced by the quotient map $W_{2,\mathbb{R}} \to \mathrm{Gr}_2^W V_\mathbb{R}$, the subspace $U \subset V_2$ corresponds to the primitive part $P_{2,\mathbb{R}} \subset \mathrm{Gr}_2^W V_\mathbb{R}$; in particular, the restriction of $Q$ to $U$ is positive definite. We have
$$
V_2 = U \oplus \langle Ne^{2,2} \rangle.
$$
This decomposition is orthogonal for $Q$ since $N \in \mathfrak{so}(V,Q)$. It follows that $V_\mathbb{R}$ can be written as
\begin{equation}
V_\mathbb{R} = U \oplus \langle e^{2,2},Ne^{2,2},N^2 e^{2,2} \rangle
\end{equation}
with $\langle e^{2,2}, Ne^{2,2}, N^2 e^{2,2} \rangle = U^\perp$ (in fact this is a decomposition as real mixed Hodge structures with the natural Hodge filtrations defined by intersecting $F^\bullet_\mathrm{lim}$ with each summand). By \eqref{eq:type_II_e22_def}, the matrix of the restriction of $Q$ to $U^\perp$ in the basis $\{e^{2,2},Ne^{2,2},N^2 e^{2,2}\}$ is
\begin{equation} \label{eq:Q_matrix_type_III}
    \begin{pmatrix}
    0 & 0 & 1 \\ 0 & -1 & 0 \\ 1 & 0 & 0
    \end{pmatrix}.
\end{equation}

\subsubsection{} Consider now the nilpotent orbit corresponding to an $\mathbb{R}$--split type III degeneration: for $z \in \mathbb{H}$, let
$$
F^k_z = e^{zN} F_{\mathrm{lim}}^k.
$$
Since the restriction of $N$ to $U$ vanishes, we have
\begin{align*}
F_z^2 &= \langle e_z^{2,0} \rangle \\
F_z^1 &= e^{zN}(V_{4,\mathbb{C}} \oplus V_{2,\mathbb{C}})= U_\mathbb{C} \oplus \langle e_z^{2,0}, N e_z^{2,0} \rangle,
\end{align*}
with
$$
e_z^{2,0}:= e^{zN}e^{2,2} = e^{2,2}+zNe^{2,2} + \frac{z^2}{2} N^2 e^{2,2}.
$$
Using \eqref{eq:Q_matrix_type_III} we can compute the Hodge norm of $e_z^{2,0}$:
\begin{equation} \label{eq:e_z20_Hodge_norm_type_III}
\begin{split}
    \|e_z^{2,0}\|_{\mathcal{V}}^2 &= -Q(e_z^{2,0},\overline{e_z^{2,0}}) \\
    &= -Q(e^{2,2}+zNe^{2,2}+\tfrac{z^2}{2} N^2e^{2,2},e^{2,2}+\overline{z}Ne^{2,2}+\tfrac{\overline{z}^2}{2} N^2e^{2,2}) \\
    &= -(\tfrac{z^2}{2}+\tfrac{\overline{z}^2}{2}-|z|^2) \\
    &= 2 \mathrm{Im}(z)^2.
\end{split}
\end{equation}
Writing $\Omega$ for the first Chern form of the hermitian line bundle $F^2_z$, this gives
\begin{equation} \label{eq:Chern_form_type_III}
\Omega = \frac{1}{2\pi i}  \partial \overline{\partial} \log \|e_z^{2,0}\|_\mathcal{V}^2 = \frac{i}{4\pi} \frac{dz \wedge d\overline{z}}{\mathrm{Im}(z)^2}.
\end{equation}
The argument we used in the case of type II degenerations shows that
\begin{equation}
h(s_v) = -2\frac{|Q(v,e_z^{2,0})|^2}{Q(e_z^{2,0},\overline{e_z^{2,0}})} = \frac{|Q(v,e_z^{2,0})|^2}{ \mathrm{Im}(z)^2}
\end{equation}
and, for the Hodge norm,
\begin{equation}
\|v\|_{\mathcal{V},z}^2 = Q(v,v)+2h(s_v) = Q(v,v) + \frac{2|Q(v,e_z^{2,0})|^2}{ \mathrm{Im}(z)^2}.
\end{equation}
Again we consider the special case $v \in W_{2,\mathbb{R}}=V_0 \oplus V_2$. Such a vector can be written as
\begin{equation} \label{eq:vectors_W_2_type_iii}
v = v_U + aNe^{2,2} + b N^2e^{2,2}
\end{equation}
for unique $v_U \in U$ and real numbers $a$ and $b$. Using $Q(F^1_\mathrm{lim},F^2_\mathrm{lim})=0$ and $Q(v,Nv')=-Q(Nv,v')$, we find that
$$
Q(v_U,e_z^{2,0}) = Q(v_U,e^{2,2}) + zQ(v_U,N e^{2,2}) + \tfrac{z^2}{2} Q(v_U,N^2 e^{2,2})=0
$$
and hence
\begin{equation}
\begin{split}
Q(v,e_z^{2,0}) &= Q(aN e^{2.2} + b N^2 e^{2,2}, e^{2,2}+z Ne^{2,2} + \tfrac{z^2}{2} N^2 e^{2,2}) \\
&= b-az.
\end{split}
\end{equation}
We conclude that for $v \in W_{2,\mathbb{R}}$ we have
\begin{equation} \label{eq:Hodge_norm_explicit_1_type_III}
h(s_v) = \frac{|b-az|^2}{\mathrm{Im}(z)^2} = a^2 + \left(\frac{b-a\mathrm{Re}(z)}{\mathrm{Im}(z)}\right)^2
\end{equation}
and
\begin{equation} \label{eq:Hodge_norm_explicit_2_type_III}
\begin{split}
\|v\|^2_{\mathcal{V},z} &= Q(v_U,v_U) -a^2+ \frac{2|b-az|^2}{\mathrm{Im}(z)^2} \\
&= Q(v_U,v_U) + a^2 + 2\left(\frac{b-a\mathrm{Re}(z)}{\mathrm{Im}(z)} \right)^2.
\end{split}
\end{equation}

\begin{remark}
The formulas for Hodge norms will be used in Section \ref{subsection:integrability_type_iii} to derive explicit expressions for $\varphi_{\mathbb{V}}(v)$ for type III degenerations that agree with those computed by Funke in \cite{Funke}.
\end{remark}

\section{Kudla--Millson theta series and degenerations of Hodge structure}
 
In this section we briefly review some of the needed background regarding the Weil representation and the construction of theta series attached to a $\mathbb{Z}$--PVHS $\mathbb{V}$ on $S$ by pulling back Kudla--Millson theta series via the period map associated with $\mathbb{V}$. We will also define certain theta series attached to limiting mixed Hodge structures.

\subsection{Weil representation} 

\subsubsection{} Let $L$ be an even lattice, that is, a free abelian group of finite rank endowed with a non--degenerate symmetric bilinear form $Q: L \times L \to \mathbb{Z}$ such that $Q(v,v)$ is even for every $v \in L$. We denote its signature by $(b^+, b^-)$ and write
$$
L^\vee = \{ v \in L \otimes \mathbb{Q} \ | \ Q(v,w) \in \mathbb{Z} \text{ for all } w \in L \}
$$
for the dual of $L$. Thus $L \subseteq L^\vee$, and the finite group $L^\vee/L$ is known as the discriminant group of $L$. We denote by
$$
\mathbb{C}[L^\vee/L]
$$
its group algebra and by $e^\mu$ ($\mu \in L^\vee/L$) its standard basis.

We write $\mathrm{Mp}_2(\mathbb{Z})$ for the metaplectic double 
cover of $\mathrm{SL}_2(\mathbb{Z})$. Its elements are pairs of the form
$$
\left( \begin{pmatrix} a & b \\ c & d \end{pmatrix}, \phi(\tau) \right),
$$
where $\left(\begin{smallmatrix} a & b \\ c & d \end{smallmatrix}\right) \in \mathrm{SL}_2(\mathbb{Z})$ and $\phi(\tau)$ satisfies $\phi(\tau)^2=c \tau + d$. It is generated by the elements
\begin{equation}
\begin{split}
T &= \left( \begin{pmatrix} 1 & 1 \\ 0 & 1 \end{pmatrix}, 1 \right), \\
S &= \left( \begin{pmatrix} 0 & -1 \\ 1 & 0 \end{pmatrix}, \sqrt{\tau} \right).
\end{split}
\end{equation}

There is a representation $\rho_L$ of $\mathrm{Mp}_2(\mathbb{Z})$ on $\mathbb{C}[L^\vee/L]$ determined by the formulas
\begin{equation}
\begin{split}
\rho_L(T)(e^\mu) &= e^{\pi i Q(\mu,\mu)} e^\mu \\
\rho_L(S)(e^\mu) &= \frac{e^{\pi i (b^- -b^+)/4}}{\sqrt{|L^\vee/L|}} \sum_{\lambda \in L^\vee/L} e^{-2\pi i Q(\mu,\lambda)} e^\lambda.
\end{split}
\end{equation}
The representation $\rho_L$ factors through a double cover of $\mathrm{SL}_2(\mathbb{Z}/N\mathbb{Z})$, where $N$ (sometimes called the level of $L$) is the smallest integer such that $NQ(\lambda,\lambda)/2$ is an integer for all $\lambda \in L^\vee$.

\subsubsection{} Let $V = L\otimes \mathbb{Q}$  and suppose given a filtration 
$$
0 \neq W_1 \subseteq W_2 \subseteq V
$$
of $V$ by $\mathbb{Q}$-vector spaces such that $W_1$ is isotropic and $W_2=W_1^\perp$. Define $L_k = W_k \cap L$ and set $\mathrm{Gr}_2^W L =L_2/L_1$. Then $\mathrm{Gr}_2^W L$ is an even lattice with respect to the bilinear form induced by $Q$. The group $\mathrm{Mp}_2(\mathbb{Z})$ acts on
$$
\mathbb{C}[(\mathrm{Gr}_2^W L)^\vee/\mathrm{Gr}_2^W L] =  \mathbb{C}[(L^\vee \cap W_2)/(L^\vee \cap W_1 + L_2)]
$$
via the corresponding Weil representation $\rho_{\mathrm{Gr}_2^W L}$. The map
\begin{equation} \label{eq:def_iota_map}
\iota: \rho_{\mathrm{Gr}_2^W L} \to \rho_L, \quad e^\mu \mapsto \sum_{\lambda \in (L^\vee \cap W_1+L)/L} e^{\lambda+\mu}
\end{equation}
intertwines the $\mathrm{Mp}_2(\mathbb{Z})$--actions \cite[Prop. 6.1]{Scheithauer}.

\subsubsection{} We briefly recall some notions of modular forms for $\mathrm{Mp}_2(\mathbb{Z})$ valued in $\rho_L$; see \cite[Chap. 1]{BruinierBook} for more details. Let 
$$
f: \mathbb{H} \to \rho_L
$$
be a smooth function and $k^+, k^- \in \tfrac{1}{2}\mathbb{Z}$. We say that $f$ is a non--holomorphic modular form of weight $(k^+,k^-)$ valued in $\rho_L$ if
\begin{equation} \label{eq:vv_mod_form}
f\left(\frac{a\tau + b}{c \tau + d} \right) = \phi(\tau)^{2k^+} \overline{\phi(\tau)}^{2k^-} \rho_L\left( \begin{pmatrix} a & b \\ c & d \end{pmatrix}, \phi(\tau) \right)f(\tau)
\end{equation}
for every $\left( \left( \begin{smallmatrix} a & b \\ c & d \end{smallmatrix}\right), \phi(\tau) \right) \in \mathrm{Mp}_2(\mathbb{Z})$.

If $f:\mathbb{H} \to \rho_L$ is holomorphic and satisfies \eqref{eq:vv_mod_form} with $(k^+,k^-)=(k,0)$, then we may write
$$
f(\tau) = \sum_{\mu} f_\mu \cdot e^\mu,
$$ 
where the components $f_\mu$ of $f$ are weakly modular forms of weight $k$. We say that $f$ is a modular (resp. cusp) form of weight $k$ valued in $\rho_L$ if it each component is a modular (resp. cusp) form of weight $k$.

The most important examples of modular forms valued in $\rho_L$ arise from theta series. For a positive definite even lattice $L$ and $\mu \in L^\vee$, define
$$
\Theta_L(\tau)_\mu = \sum_{\lambda \in \mu+L} e^{\pi i Q(\lambda,\lambda)\tau}
$$ 
and set
$$
\Theta_L(\tau) = \sum_\mu \Theta_L(\tau)_\mu \cdot e^\mu.
$$
Then $\Theta_L(\tau)$ is a modular form valued in $\rho_L$ of weight $\mathrm{rk}(L)/2$ (\cite[Thm 4.1]{Borcherds}).

%Mathai--Quillen and Kudla--Millson forms
\subsection{Kudla--Millson theta series}

\subsubsection{} Let $\mathbb{V} \to S$ be a $\mathbb{Z}$--PVHS satisfying \ref{hypothesis:hyp_on_V}. Associated with $\mathbb{V}$ there is a period map
$$
\Phi_\mathbb{V}: S \to \Gamma \backslash \mathbb{D}
$$
into a quotient of the hermitian symmetric space attached to $\mathrm{SO}(h^{1,1},2)$ (see e.g. \cite[p. 227-228]{Schmid}). More precisely, fix a point $s_0 \in S$ and let $\pi :\tilde{S} \to S$ be the universal cover of $S$. The pullback $\pi^*\mathcal{V}_\mathbb{Z}$ to $\tilde{S}$ is then a constant local system endowed with a constant bilinear form induced by $Q$, i.e. of the form $\underline{V_\mathbb{Z}}$ for some indefinite lattice $(V_\mathbb{Z},Q)$. It carries a canonical action
\begin{equation}
\label{eq:period_domain_local_system_action_1}
\pi_1(S,s_0) \to \mathrm{Aut}(V_\mathbb{Z},Q).
\end{equation}
Let now $V_\mathbb{R}=V_\mathbb{Z} \otimes \mathbb{R}$ and denote by $\mathbb{D}$ the space of all Hodge structures on $V_\mathbb{R}$ polarized by $Q$ with $h^{2,0}=1$; thus $\mathbb{D}$ is the hermitian symmetric domain attached to the orthogonal group $\mathrm{Aut}(V_\mathbb{R},Q)$. The pullback $\pi^*\mathbb{V}$ induces a holomorphic map $\Phi_{\pi^* \mathbb{V}}:\tilde{S} \to \mathbb{D}$. If $\Gamma \subseteq \mathrm{Aut}(V_\mathbb{Z},Q)$ is any subgroup containing the image of \eqref{eq:period_domain_local_system_action_1}, then the composite of $\Phi_{\pi^*\mathbb{V}}$ with the projection $\mathbb{D} \to \Gamma \backslash \mathbb{D}$ induces a holomorphic map $S \to \Gamma \backslash \mathbb{D}$. Under assumption \ref{hypothesis:hyp_on_V}, and identifying $\mathcal{V}_\mathbb{Z}^\vee/\mathcal{V}_\mathbb{Z} \simeq V_{\mathbb{Z}}^\vee/V_\mathbb{Z}$, we can take for $\Gamma$ the group
$$
\Gamma:=\Gamma_{V_\mathbb{Z}} = \left\{ \gamma \in \mathrm{Aut}(V_\mathbb{Z},Q) \ | \ \gamma \equiv \mathrm{id} \text{ on } V_\mathbb{Z}^\vee/V_\mathbb{Z} \right\},
$$
and denote the corresponding period map by
\begin{equation}
\label{eq:period_map_def_1}
\Phi_\mathbb{V} : S \to \Gamma \backslash \mathbb{D}.
\end{equation}

\subsubsection{} Let $\mathcal{S}(V_\mathbb{R})$ be the Schwartz space of $V_\mathbb{R}$. In their seminal works \cite{KudlaMillson1, KudlaMillson2, KudlaMillson3}, Kudla and Millson have introduced certain differential forms 
$$
\varphi_\mathrm{KM} \in (\Omega^{1,1}(\mathbb{D}) \otimes \mathcal{S}(V_\mathbb{R}))^{\mathrm{SO}(V_\mathbb{R},Q)}
$$
and associated theta series
$$
\Theta_{\mathrm{KM}}(\tau)_\mu= \sum_{v \in \mu + V_\mathbb{Z}} \varphi_{\mathrm{KM}}(y^{1/2}v) e^{\pi i x Q(v,v)} \in \Omega^{1,1}(\mathbb{D})^\Gamma \simeq \Omega^{1,1}(\Gamma \backslash \mathbb{D}).
$$
Using the period map $\Phi_\mathbb{V}$ we can define differential forms on $S$ canonically associated with $\mathbb{V}$ by pulling back the Kudla--Millson theta series.

\begin{definition}
For $\mu \in \mathcal{V}_\mathbb{Z}^\vee/\mathcal{V}_\mathbb{Z}$, define
$$
\Theta_\mathbb{V}(\tau)_\mu = \Phi_\mathbb{V}^*\Theta_{\mathrm{KM}}(\tau)_\mu.
$$
\end{definition}
The results of Kudla and Millson imply that the forms $\Theta_\mathbb{V}(\tau)_\mu$ have modularity properties that can be described most easily by saying that the differential form
\begin{equation}\label{eq:def_vv_Theta_V}
\Theta_\mathbb{V}(\tau) := \sum_{\mu \in \mathcal{V}^\vee_\mathbb{Z}/\mathcal{V}_\mathbb{Z}} \Theta_\mathbb{V}(\tau)_\mu \cdot e^\mu \in \Omega^{1,1}(S) \otimes \rho_{\mathcal{V}_\mathbb{Z}}.
\end{equation}
transforms under $\mathrm{Mp}_2(\mathbb{Z})$ like a non--holomorphic modular form of weight $\mathrm{rk}(\mathcal{V}_\mathbb{Z})/2$ valued in $\rho_{\mathcal{V}_\mathbb{Z}}$.

Using the formulas given by Kudla and Millson we can describe $\Theta_\mathbb{V}(\tau)_\mu$ as
\begin{equation}
\Theta_{\mathbb{V}}(\tau)_\mu = \sum_{\mu +\mathcal{V}_\mathbb{Z}} \varphi_\mathbb{V}(y^{1/2}v) e^{\pi i x Q(v,v)},
\end{equation}
with $\varphi_\mathbb{V}(v)$ (which is only locally defined, e.g. on a small disk around a given point in $S$) given by
\begin{equation}
\varphi_\mathbb{V}(v) = e^{-\pi \|v\|_\mathcal{V}^2}(-\Omega + ih(s_v)\theta \wedge \overline{\theta}), \quad \theta=\frac{\partial h(s_v)}{h(s_v)}.
\end{equation}
We briefly explain the terms in this formula; for more details, see also \cite{GarciaSPD}. The terms $\|v\|_\mathcal{V}^2$ and $h(s_v)$ have been defined in \ref{subsection:VHS_definitions}: $\|v\|_\mathcal{V}^2$ denotes the Hodge norm of $v$ and the value of $h(s_v)$ at $z \in S$ is
$$
h(s_v)_z = -2Q(v^{2,0}_z,v^{0,2}_z).
$$
The term $\Omega$ denotes the first Chern form of $\mathcal{L}$, i.e.
\begin{equation}
\Omega = (2\pi i)^{-1} \partial \overline{\partial}\log \|s\|_\mathcal{V}^2
\end{equation}
for any meromorphic section $s$ of $\mathcal{L}$.

We will sometimes write
\begin{equation}
\varphi_\mathbb{V}(v)=e^{-\pi Q(v,v)}\varphi_\mathbb{V}^\circ(v),
\end{equation}
with
\begin{equation}
\varphi^\circ_\mathbb{V}(v) = e^{-2\pi h(s_v)}(-\Omega + ih(s_v)\theta \wedge \overline{\theta}), \quad \theta=\frac{\partial h(s_v)}{h(s_v)}.
\end{equation}

%Theta series and MHS
\subsection{Theta series and limit MHS}

We now associate a vector--valued theta series to a limiting mixed Hodge structure of type II or III.

\subsubsection{Type II} For a type II degeneration, the polarization $Q$ induces a quadratic form on $\mathrm{Gr}_2^W V$ that we still denote by $Q$; note that in this case $\mathrm{Gr}_2^W V=P_2$ and hence this quadratic form is positive definite. The image of $V_\mathbb{Z} \cap W_2$ in $\mathrm{Gr}_2^W V$ defines a lattice that we denote by $\mathrm{Gr}_2^W V_\mathbb{Z}$. We write
\begin{equation}
\begin{split}
\rho_{\mathrm{Gr}_2^W V_\mathbb{Z}} &= \mathbb{C}[(\mathrm{Gr}_2^W V_\mathbb{Z})^\vee/\mathrm{Gr}_2^WV_\mathbb{Z}] \\
&= \mathbb{C}\left[ W_2 \cap V_\mathbb{Z}^\vee/(W_1 \cap V_\mathbb{Z}^\vee + W_2 \cap V_\mathbb{Z}) \right]
\end{split}
\end{equation}
for the corresponding Weil representation of $\mathrm{Mp}_2(\mathbb{Z})$. Associated to the positive--definite even lattice $(\mathrm{Gr}_2^W V_\mathbb{Z},Q)$ is the theta series 
$$
\Theta_{\mathrm{Gr}_2^W V_\mathbb{Z}}(\tau) = \sum_\mu \Theta_{\mathrm{Gr}_2^W V_\mathbb{Z}}(\tau)_\mu \cdot e^\mu.
$$
It is a modular form valued in $\rho_{\mathrm{Gr}_2^W \mathcal{V}_\mathbb{Z}}$ of weight $(\mathrm{rk} (\mathcal{V}_\mathbb{Z})-4)/2$.

More generally, the image of $V_\mathbb{Z} \cap W_k$ in $\mathrm{Gr}_k^W V$ is a lattice that we denote by $\mathrm{Gr}_k^W V_\mathbb{Z}$. Then
$$
N : \mathrm{Gr}_3^W V_\mathbb{Z} \to \mathrm{Gr}_1^W V_\mathbb{Z}
$$
is an injective map between lattices of the same rank; following \cite{StewartVologodsky} we write
$$
r_1(V_\mathbb{Z},N)
$$
for the size of its cokernel.\mynote{TO DO: explain why it is a perfect square, relation with monodromy pairing.} The form $Q$ also induces a non-degenerate bilinear pairing
\begin{equation}
\mathrm{Gr}^W_{3,1} Q: \mathrm{Gr}_3^W V_\mathbb{Z} \times \mathrm{Gr}_1^W V_\mathbb{Z} \to \mathbb{Z}.
\end{equation}
Let $\mathrm{disc}(\mathrm{Gr}_{3,1}^W Q)$ be its discriminant, that is
$$
\mathrm{disc}(\mathrm{Gr}_{3,1}^W Q) = |\det (\mathrm{Gr}_{3,1}^W Q(\tilde{v}_i,\tilde{w}_j))|
$$
for any bases $(\tilde{v}_i)$ of $\mathrm{Gr}_3^W V_\mathbb{Z}$ and $(\tilde{w}_j)$ of $\mathrm{Gr}_1^W V_\mathbb{Z}$. Note that the form $Q_3(v,w)=Q(v,Nw)$ is symplectic and takes integral values on $\mathrm{Gr}_3^W V_\mathbb{Z}$, and hence
$$
r_1(V_\mathbb{Z},N) \mathrm{disc}(\mathrm{Gr}_{3,1}^W Q) = |\det(Q_3(\tilde{v}_i,\tilde{v}_j))|=\deg(Q_3)^2
$$
for a positive integer $\deg(Q_3)$.

Let us write
\begin{equation} \label{eq:def_iota_map_1}
\iota : \rho_{\mathrm{Gr}_2^W V_\mathbb{Z}} \to \rho_{\mathcal{V}_\mathbb{Z}}
\end{equation}
for the $\mathrm{Mp}_2(\mathbb{Z})$--intertwining map defined in \eqref{eq:def_iota_map}; we recall that
$$
\iota(e^\lambda) = \sum_{\gamma \in (W_1 \cap V_\mathbb{Z}^\vee + V_\mathbb{Z})/V_\mathbb{Z}} e^{\gamma + \lambda}.
%\left\{ \begin{array}{cc}  \end{array} \right.
$$

For $\mu \in (\mathrm{Gr}_2^W V_\mathbb{Z})^\vee/\mathrm{Gr}_2^W V_\mathbb{Z}$, define
\begin{equation} \label{eq:def_theta_MHS_type_II}
Z_{\mathbb{V},P}^-(\tau)_\mu = \left(\frac{r_1(V_\mathbb{Z},N)}{\mathrm{disc}(\mathrm{Gr}_{3,1}^W Q)} \right)^{1/2} \frac{1}{4\pi 
y} \Theta_{\mathrm{Gr}_2^W V_\mathbb{Z}} (\tau)_\mu
\end{equation}
and set
\begin{equation} \label{eq:def_theta_MHS_type_II_2}
Z_{\mathbb{V},P}^-(\tau) = \sum_{\mu \in (\mathrm{Gr}_2^W V_\mathbb{Z})^\vee/\mathrm{Gr}_2^W V_\mathbb{Z}} Z_{\mathbb{V},P}^-(\tau)_\mu \cdot \iota(e^{\mu}).
\end{equation}

\subsubsection{Type III} \label{subsubsection:Theta_MHS_type_III} Let us now consider degenerations of type III. Let
\begin{equation} \label{eq:def_shadow_type_II_1}
\mathrm{Gr}_{2,\mathrm{prim}}^W V = \ker(N: W_2/W_1 \to W_0) \subset \mathrm{Gr}_2^{W} V.
\end{equation}
Then $\mathrm{Gr}_{2,\mathrm{prim}} V$ is a vector space over $\mathbb{Q}$ of dimension $n-1$. The subgroup 
$$
\mathrm{Gr}_{2,\mathrm{prim}}^W V_\mathbb{Z}:=\mathrm{Gr}_{2,\mathrm{prim}}^W V \cap \mathrm{Gr}^W_2 V_\mathbb{Z}
$$
 is a lattice in $\mathrm{Gr}_{2,\mathrm{prim}}^W V$. Since $W_1$ is $Q$-isotropic, the polarization $Q$ induces a quadratic form on $\mathrm{Gr}_{2,\mathrm{prim}}^W V$ that is positive definite and that we still denote by $Q$. Let us write
\begin{equation}
\begin{split}
\rho_{\mathrm{Gr}_{2,\mathrm{prim}}^W V_\mathbb{Z}} &= \mathbb{C}[(\mathrm{Gr}_{2,\mathrm{prim}}^W V_\mathbb{Z})^\vee / \mathrm{Gr}_{2,\mathrm{prim}}^W V_\mathbb{Z}] 
\end{split}
\end{equation}
for be the corresponding Weil representation of $\mathrm{Mp}_2(\mathbb{Z})$. Associated to the positive definite even lattice  $(\mathrm{Gr}_{2,\mathrm{prim}}^W V_\mathbb{Z},Q)$ is the theta series 
$$
\Theta_{\mathrm{Gr}_{2,\mathrm{prim}}^W V_\mathbb{Z}}(\tau) = \sum_{\mu} \Theta_{\mathrm{Gr}_{2,\mathrm{prim}}^W V_\mathbb{Z}}(\tau)_\mu \cdot e^\mu
$$
that transforms under $\mathrm{Mp}_2(\mathbb{Z})$ like a holomorphic modular form valued in $\rho_{\mathrm{Gr}_{2,\mathrm{prim}}^W V_\mathbb{Z}}$ of weight $(n-1)/2$.

The bilinear form $Q$ induces a pairing
$$
\mathrm{Gr}_{4,0}^W Q : \mathrm{Gr}_4^W V_\mathbb{Z} \times \mathrm{Gr}_0^W V_\mathbb{Z} \to \mathbb{Z}.
$$
Let $\mathrm{disc}(\mathrm{Gr}_{4,0}^W Q)$ be its discriminant, that is
$$
\mathrm{disc}(\mathrm{Gr}_{4,0}^W Q) = |\det (\mathrm{Gr}_{4,0}^W Q(\tilde{v}_i,\tilde{w}_j))|
$$
for any bases $(\tilde{v}_i)$ of $\mathrm{Gr}_4^W V_\mathbb{Z}$ and $(\tilde{w}_j)$ of $\mathrm{Gr}_0^W V_\mathbb{Z}$ respectively.

Let us now consider the rank one lattice
$$
\mathrm{Gr}_4^W V_\mathbb{Z} = \text{image of } V_\mathbb{Z} \text{ in } \mathrm{Gr}_4^W V,
$$
endowed with the positive--definite quadratic $Q_4(v,v)=Q(v,N^2 v)$ defined in \eqref{eq:Q_k_definition}. 

\begin{lemma} \label{lemma:integrality_N_type_iii}  Let $L$ be the rank one lattice $\mathrm{Gr}_4^W V_\mathbb{Z}$ endowed with the positive--definite quadratic form $v \mapsto Q_4(v,v)$.
\begin{enumerate}
\item[(i)]
The lattice $L$ is even.
\item[(ii)] The image of $L$ under $N: \mathrm{Gr}_4^W V \to \mathrm{Gr}_2^W V$ lies in $\mathrm{Gr}_2^W V_\mathbb{Z}$.
\item[(iii)] The image of $L^\vee$ under $N:\mathrm{Gr}_4^W V \to \mathrm{Gr}_2^W V$ contains 
$(\mathrm{Gr}_2^W V_\mathbb{Z})^\vee \cap N(\mathrm{Gr}_4^W V)
$.

\end{enumerate}
\end{lemma}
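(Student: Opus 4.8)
The plan is to reduce all three assertions to elementary manipulations with the integral monodromy $T$, the point being that although $N=\log T$ is only rational, its square is integral and the rational correction lands in $W_1$. Writing $n=T-1$, the type III bound $(T-1)^3=0$ gives $N=n-\tfrac12 n^2$ and hence $N^2=n^2$, which preserves $V_\mathbb{Z}$. I would fix $v_0\in V_\mathbb{Z}$ whose class $\bar{v}_0$ generates the rank one lattice $L=\mathrm{Gr}_4^W V_\mathbb{Z}$ and set $d=Q_4(\bar{v}_0,\bar{v}_0)=Q(v_0,N^2 v_0)=Q(v_0,n^2 v_0)\in\mathbb{Z}$, which is well defined by \eqref{eq:Q_k_definition}.

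For (i) the key identity is that the adjoint of $n$ with respect to $Q$ is $n^{\dagger}=T^{-1}-1=-n+n^2$, using that $T$ is a $Q$-isometry and $T^{-1}=1-n+n^2$. Then $n^{\dagger}n=-n^2+n^3=-n^2$, so $Q(v_0,n^2 v_0)=-Q(v_0,n^{\dagger}n v_0)=-Q(n v_0,n v_0)$. Since $n v_0=(T-1)v_0\in V_\mathbb{Z}$ and $(V_\mathbb{Z},Q)$ is even, the right-hand side lies in $2\mathbb{Z}$; hence $d$ is even and $L$ is even.

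For (ii) I would compute the image of the generator: $N\bar{v}_0=[N v_0]$ in $\mathrm{Gr}_2^W V=W_2/W_1$. Because $N v_0=n v_0-\tfrac12 n^2 v_0$ and $n^2 v_0=N^2 v_0\in\mathrm{Im}\,N^2=W_1$, the half-integral term is killed modulo $W_1$, so $N\bar{v}_0=[n v_0]$. As $n v_0\in V_\mathbb{Z}$ lies in $W_2$ (indeed $N v_0\in W_2$ and $n^2 v_0\in W_1\subseteq W_2$), its class lies in $\mathrm{Gr}_2^W V_\mathbb{Z}$ by definition.

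For (iii) I set $g=N\bar{v}_0=[n v_0]$, which is integral by (ii) and satisfies $Q(g,g)=Q(n v_0,n v_0)=-d$. The image line is $N(\mathrm{Gr}_4^W V)=\mathbb{Q}\,g$, and since $L^{\vee}=\tfrac1d\mathbb{Z}\,\bar{v}_0$ we get $N(L^{\vee})=\tfrac1d\mathbb{Z}\,g$. If $t g$ with $t\in\mathbb{Q}$ lies in $(\mathrm{Gr}_2^W V_\mathbb{Z})^{\vee}$, then pairing against $g\in\mathrm{Gr}_2^W V_\mathbb{Z}$ forces $t\,Q(g,g)=-t d\in\mathbb{Z}$, so $t\in\tfrac1d\mathbb{Z}$ and $t g\in N(L^{\vee})$, which is the desired containment. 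The only real subtlety, and the step I would be most careful about, is bookkeeping of the correction $\tfrac12 n^2 v_0$: the whole argument works precisely because $N^2=n^2$ is genuinely integral while this correction is invisible in $\mathrm{Gr}_2^W$, lying in $W_1=\mathrm{Im}\,N^2$.
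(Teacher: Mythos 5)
Your proof is correct and follows essentially the same route as the paper: the key point in both is that $N=(T-1)-\tfrac12 (T-1)^2$ for a type III degeneration, so $N^2=(T-1)^2$ preserves $V_\mathbb{Z}$, the evenness in (i) reduces via $Q(v_0,N^2v_0)=-Q((T-1)v_0,(T-1)v_0)$ to the evenness of $(V_\mathbb{Z},Q)$, and (ii) follows from $N\equiv T-1 \bmod W_0$. Your treatment of (iii) via the explicit rank-one generator $g=N\bar v_0$ with $Q(g,g)=-d$ is just the paper's adjunction argument $Q(v,Nw')=-Q(Nv,w')$ specialized to $w'=g$, and it is complete (the needed nonvanishing $d\neq 0$ is guaranteed by positive definiteness of $Q_4$).
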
 
\begin{proof}
For a degeneration of type III we have
$$
N= (T-1) - (T-1)^2/2
$$ 
and so $N^2=(T-1)^2$. Hence we can write $N=(T-1)-N^2/2$ and for $v \in V_\mathbb{Z}$ we have
$$
Q(v,N^2 v) = -Q(Nv,Nv) = -Q((T-1)v,(T-1)v)
$$
which is an even integer since $(T-1)v \in V_\mathbb{Z}$. This proves (i). Part (ii) follows from the identity $N=(T-1)-N^2/2$, which implies that $N \equiv (T-1) \mod W_0$. For part (iii), suppose that $w \in (\mathrm{Gr}_2^W V_\mathbb{Z})^\vee \cap N(\mathrm{Gr}_4^W V)$ and write $w=Nv$ with $v \in \mathrm{Gr}_4^W V$. For any $w' \in \mathrm{Gr}_2^W V_\mathbb{Z}$, we have
$$
Q(v,Nw')=-Q(w,w') \in \mathbb{Z},
$$
i.e. $Q(v,v') \in \mathbb{Z}$ for every $v' \in N(\mathrm{Gr}_2^W V_\mathbb{Z})$. Part (ii) implies that $N^2(L) \subseteq N(\mathrm{Gr}_2^W V_\mathbb{Z})$ and hence that $v \in L^\vee$.
\end{proof}

The proof the lemma shows that $N^2=(T-1)^2$ is integral. Thus
$$
\mathrm{Gr}^W N^2 : \mathrm{Gr}_4^W V_\mathbb{Z} \to \mathrm{Gr}_0^W V_\mathbb{Z} 
$$
is an injective map of lattices of the same rank; we denote its cokernel by
$$
r_2(V_\mathbb{Z},N).
$$
Writing $\mathrm{Vol}(\mathrm{Gr}^W_4 V_\mathbb{Z}):=Q_4(v_0,v_0) \in 2\mathbb{N}$ where $v_0$ is a generator of $\mathrm{Gr}^W_4 V_\mathbb{Z}$, we have
$$
\mathrm{Vol}(\mathrm{Gr}^W_4 V_\mathbb{Z}) = r_2(V_\mathbb{Z},N)\mathrm{disc}(\mathrm{Gr}^W_{4,0} Q).
$$

Associated to the positive--definite even lattice $(\mathrm{Gr}_4^W V_\mathbb{Z},Q_4)$ is the Weil representation
$$
\rho_{\mathrm{Gr}_4^W V_\mathbb{Z}} = \mathbb{C}[(\mathrm{Gr}_4^W V_\mathbb{Z})^\vee/\mathrm{Gr}_4^W V_\mathbb{Z}]
$$
of $\mathrm{Mp}_2(\mathbb{Z})$ 
and the non--holomorphic unary theta series
\begin{equation}
R_{\mathrm{Gr}_4^W V_\mathbb{Z}}(\tau)_\nu = \frac{1}{4\pi \sqrt{y}} \sum_{v \in \nu + \mathrm{Gr}_4^W V_\mathbb{Z}} \beta_{3/2}(2\pi y Q_4(v,v)) q^{-Q_4(v,v)/2},
\end{equation}
where
$$
\beta_{3/2}(t) = \int_1^\infty u^{-3/2}e^{-tu}du.
$$

Let us write $(\mathrm{Gr}_4^W V_\mathbb{Z})^-$ for the negative--definite even lattice defined by $-Q_4$. By the above lemma, the map
$$
\mathrm{Gr}_{2,\mathrm{prim}}^W V_\mathbb{Z} \hat{\oplus} (\mathrm{Gr}_4^W V_\mathbb{Z})^- \to \mathrm{Gr}_2^W V_\mathbb{Z}, \quad (v,w) \mapsto v+Nw
$$
identifies $\mathrm{Gr}_{2,\mathrm{prim}} V_\mathbb{Z} \hat{\oplus} (\mathrm{Gr}_4^W V_\mathbb{Z})^-$ with a sublattice of $\mathrm{Gr}_2^W V_\mathbb{Z}$ of finite index. This induces a natural $\mathrm{Mp}_2(\mathbb{Z})$--intertwining map
\begin{equation} \label{eq:def_iota_map_2}
\rho_{\mathrm{Gr}_{2,\mathrm{prim}} V_\mathbb{Z}} \otimes \rho_{(\mathrm{Gr}_4^W V_\mathbb{Z})^-} \simeq \rho_{\mathrm{Gr}_{2,\mathrm{prim}} V_\mathbb{Z} \oplus (\mathrm{Gr}_4^W V_\mathbb{Z})^-} \to \rho_{\mathrm{Gr}_2^W V_\mathbb{Z}}
\end{equation}
$$
e^\lambda \otimes e^\nu \mapsto \left\{\begin{array}{rr} 0, & \text{ if } \lambda+ N \nu \notin (\mathrm{Gr}_2^W V_\mathbb{Z})^\vee, \\ \lambda + N\nu + \mathrm{Gr}_2^W V_\mathbb{Z}, & \text{ otherwise.}  \end{array} \right.
$$
Let
\begin{equation}
\iota: \rho_{\mathrm{Gr}_{2,\mathrm{prim}} V_\mathbb{Z}} \otimes \rho_{(\mathrm{Gr}_4^W V_\mathbb{Z})^-} \to \rho_{\mathcal{V}_\mathbb{Z}}
\end{equation}
be the map obtained by composing \eqref{eq:def_iota_map_2} with the map  \eqref{eq:def_iota_map_1}. For $\mu \in (\mathrm{Gr}_2^W V_\mathbb{Z})^\vee/\mathrm{Gr}_2^W V_\mathbb{Z}$, define
\begin{equation} \label{eq:def_theta_MHS_type_III}
\begin{split}
Z_{\mathbb{V},P}^-(\tau)_\mu = &  \left(\frac{r_2(V_\mathbb{Z},N)}{2\mathrm{disc}(\mathrm{Gr}^{W}_{4,0} Q)}\right)^{1/2} \\ & \times \sum_{\substack{\lambda + N\nu  \equiv \mu \\ \mod \mathrm{Gr}_2^W V_\mathbb{Z}}} R_{\mathrm{Gr}_4^W V_\mathbb{Z}}(\tau)_\nu \cdot \Theta_{\mathrm{Gr}_{2,\mathrm{prim}}^{W} V_\mathbb{Z}}(\tau)_\lambda
\end{split}
\end{equation}
and set
\begin{equation} \label{eq:def_theta_MHS_type_III_2}
Z_{\mathbb{V},P}^-(\tau) = \sum_{\mu \in (\mathrm{Gr}_2^W V_\mathbb{Z})^\vee/\mathrm{Gr}_2^W V_\mathbb{Z}} Z_{\mathbb{V},P}^-(\tau)_\mu \cdot \iota(e^{\mu}).
\end{equation}

\section{Integrability of Kudla--Millson theta series}
\label{section:Integrability}

\subsection{A convergence result} \label{section:main_convergence} Let $\overline{S}$ be a compact Riemann surface and let $S$ be obtained by removing a finite set of points from $\overline{S}$. Consider a polarized variation of Hodge structure $\mathbb{V} \to S$ of weight two with $h^{2,0}=1$ and $h^{1,1}=n$ satisfying \ref{hypothesis:hyp_on_V}.  In the previous section we have attached to $\mathbb{V}$ a collection of closed differential forms
$$
\Theta_\mathbb{V}(\tau)_\mu \in \Omega^{1,1}(S), \quad \mu \in \mathcal{V}_\mathbb{Z}^\vee/\mathcal{V}_\mathbb{Z},
$$
that vary smoothly in $\tau \in \mathbb{H}$ and transform like  (non-holomorphic) modular forms of weight $\mathrm{rk}(\mathcal{V}_\mathbb{Z})/2$. If $S=\overline{S}$, i.e. when $S$ is compact, the integral
\begin{equation} \label{eq:I_V_def_integrability_section}
Z_\mathbb{V}(\tau)_\mu := \int_S \Theta_{\mathbb{V}}(\tau)_\mu
\end{equation}
is obviously convergent, and the results of Kudla and Millson \cite{KudlaMillson3} show that $Z_\mathbb{V}(\tau)_\mu$ is a holomorphic modular form of weight $1+n/2$ with $q$-expansion
$$
-\mathrm{deg}(\mathcal{L}) \delta_{\mu,0} + \sum_{m >0} \mathrm{deg} \ \mathrm{NL}_\mathbb{V}(m)_\mu \cdot q^m.
$$
A little more precisely: the $Z_\mathbb{V}(\tau)_\mu$ are the components of a modular form of weight $1+n/2$ valued in $\rho_{\mathcal{V}_\mathbb{Z}}$.

Now suppose that $S$ is not compact; in that case, the  differential form $\Theta_\mathbb{V}(\tau)$ might not extend to a smooth form on $\overline{S}$. Fortunately, as we will show below, the singularities of $\Theta_{\mathbb{V}}(\tau)$ around the points in $\overline{S} \, \backslash \, S$ are very mild. In particular, $\Theta_\mathbb{V}(\tau)$ is always integrable over $S$ and so one can define $Z_\mathbb{V}(\tau)$ as in \eqref{eq:I_V_def_integrability_section} for arbitrary $S$. This is the content of the following proposition, whose proof will comprise most of this section. Let us write
$$
\Theta_\mathbb{V}(\tau)_\mu = \sum_{m \in \tfrac{1}{2}Q(\mu,\mu) +\mathbb{Z}} \Theta^\circ_\mathbb{V}(y)_{m,\mu} \cdot q^m,
$$
with
$$
\Theta^\circ_\mathbb{V}(y)_{m,\mu} = \sum_{\substack{v \in \mu+ \mathcal{V}_\mathbb{Z} \\ Q(v,v)=2m}} \varphi^\circ_\mathbb{V}(y^{1/2} v).
$$

\begin{theorem}  \label{thm:convergence_main}
Let $\mathbb{V}$ be a $\mathbb{Z}$--PVHS over $S$ of weight two with $h^{2,0}=1$. Then the integral
$$
Z_\mathbb{V}(\tau)_\mu:= \int_S \Theta_\mathbb{V}(\tau)_\mu
$$
converges for all $\mu \in \mathcal{V}_\mathbb{Z}^\vee/\mathcal{V}_\mathbb{Z}$. The forms $\Theta_{\mathbb{V}}^\circ(y)_{m,\mu}$ are also integrable over $S$ for any $m$ and $\mu$ and
we have
$$
Z_\mathbb{V}(\tau)_\mu = \sum_{m \in \tfrac{Q(\mu,\mu)}{2}+\mathbb{Z}} \left( \int_S \Theta^\circ_\mathbb{V}(y)_{m,\mu} \right) q^m.
$$
The expression
$$
Z_\mathbb{V}(\tau):= \sum_{\mu \in \mathcal{V}_\mathbb{Z}^\vee/\mathcal{V}_\mathbb{Z}} Z_\mathbb{V}(\tau)_\mu \cdot e^\mu
$$
defines a (possibly non-holomorphic) modular form of weight $1+n/2$ valued in $\rho_{\mathcal{V}_\mathbb{Z}}$.
\end{theorem}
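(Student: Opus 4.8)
The plan is to reduce the statement to a local analysis near each puncture $P \in \overline{S}\setminus S$ and to exploit Schmid's approximation of $\mathbb{V}$ by its nilpotent orbit $\mathbb{V}^{\mathrm{nilp}}$. Since $\Theta_\mathbb{V}(\tau)_\mu$ is a smooth $(1,1)$-form on $\overline{S}$ with finitely many points removed, it is automatically integrable away from the punctures, so the entire content is the $L^1$-integrability of its density in a punctured disk $\Delta^*$ around each $P$. Working in the uniformizing coordinate $z \in \mathbb{H}$ with $t = e^{2\pi i z}$, this amounts to integrability as $\mathrm{Im}(z) \to \infty$ of the function $f(z)$ defined by $\Theta_\mathbb{V}(\tau)_\mu = f(z)\,dx\wedge dy$. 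Because the question is local I may, after a finite base change $t \mapsto t^e$ (which multiplies every integral by the degree and so preserves integrability), assume that the local monodromy is unipotent and apply the formulas of Section \ref{subsection:type_ii_degenerations} and Section \ref{subsection:type_iii_degenerations}. I would then write $\Theta_\mathbb{V}(\tau)_\mu = \Theta_{\mathbb{V}^{\mathrm{nilp}}}(\tau)_\mu + \bigl(\Theta_\mathbb{V}(\tau)_\mu - \Theta_{\mathbb{V}^{\mathrm{nilp}}}(\tau)_\mu\bigr)$ and prove that each summand is integrable near $P$.

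For the nilpotent orbit the Hodge norms are given explicitly by \eqref{eq:Hodge_norm_explicit_2_type_II} (and, for type III, \eqref{eq:Hodge_norm_explicit_2_type_III}). The difficulty—and the \emph{main obstacle}—is that the two terms making up $\varphi_\mathbb{V}^\circ(v) = e^{-2\pi h(s_v)}(-\Omega + i h(s_v)\theta\wedge\overline{\theta})$ are individually not integrable: summed over the vectors $v$ lying in the isotropic part $W_1$ of the weight filtration, where $\|v\|_\mathcal{V}^2 \to 0$ and the Gaussian $e^{-\pi\mathrm{Im}(\tau)\|v\|_\mathcal{V}^2}$ provides no decay, each of $-\Omega$ and $i h(s_v)\theta\wedge\overline{\theta}$ produces a density of size $\sim \mathrm{Im}(z)^{-1}$, which fails to be integrable near the cusp. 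The key point is that these leading contributions cancel \emph{exactly}. To extract this I would apply Poisson summation in the isotropic directions $W_1$: every Fourier mode of nonzero frequency decays exponentially in $\mathrm{Im}(z)$, and the only dangerous term is the zero mode, given by the Gaussian integral of $\varphi^\circ_\mathbb{V}$ over $W_{1,\mathbb{R}}$. Using \eqref{eq:Chern_form_type_II} and \eqref{eq:Hodge_norm_explicit_2_type_II} one computes this zero mode and finds that the $-\Omega$ and $ih(s_v)\theta\wedge\overline{\theta}$ pieces cancel identically, leaving a density that is $O(e^{-c\,\mathrm{Im}(z)})$ and hence integrable; the type III case is handled the same way via \eqref{eq:Chern_form_type_III} and \eqref{eq:Hodge_norm_explicit_2_type_III}, though its two-step weight filtration makes the bookkeeping heavier. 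Vectors with a nonzero component of weight $>2$ contribute $h(s_v)\sim \mathrm{Im}(z)\to\infty$ and are suppressed by $e^{-2\pi\mathrm{Im}(\tau)h(s_v)}$, so they cause no trouble.

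The difference $\Theta_\mathbb{V}(\tau)_\mu - \Theta_{\mathbb{V}^{\mathrm{nilp}}}(\tau)_\mu$ is comparatively soft. By \eqref{eq:map_relating_V_and_nilp_orbit} the Hodge data of $\mathbb{V}$ and $\mathbb{V}^{\mathrm{nilp}}$ differ by the gauge factor $e^{zN}\psi_\mathbb{V}(t)e^{-zN}$, which by \eqref{eq:dist_to_nilp_estimate} differs from the identity by $O(|t|(\log|t|)^{2k})$; since the Kudla--Millson form depends smoothly on the point of $\mathbb{D}$, the difference of densities is $O(|t|(\log|t|)^{2k})$ times controlled factors, and the extra power $|t| = e^{-2\pi\mathrm{Im}(z)}$ yields exponential decay dominating any polynomial growth of the Hodge-norm factors—here I would invoke the refined derivative bounds for $\|v\|^2_t$ (the sharpened form of Schmid's norm estimate). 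This establishes convergence of $Z_\mathbb{V}(\tau)_\mu$. The same cusp analysis applied at fixed $m$ shows each $\Theta^\circ_\mathbb{V}(y)_{m,\mu}$ is integrable: for $m \le 0$ one has $h(s_v) = \|v^{1,1}\|_\mathcal{V}^2 - Q(v,v) \ge -2m$, so the Gaussian already forces decay, while for $m > 0$ only finitely many weight-two vectors occur and the zero-mode cancellation applies. For the identity $Z_\mathbb{V}(\tau)_\mu = \sum_m \bigl(\int_S \Theta^\circ_\mathbb{V}(y)_{m,\mu}\bigr)q^m$ one interchanges sum and integral; recombining $e^{-2\pi\mathrm{Im}(\tau)h(s_v)}$ with $|q^m| = e^{-2\pi\mathrm{Im}(\tau)m}$ gives precisely $e^{-\pi\mathrm{Im}(\tau)\|v\|_\mathcal{V}^2}$, so the rearrangement is legitimate once the cusp asymptotics above are used to control the summed densities uniformly.

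Finally, modularity requires no new work once integrability is known, since $\mathrm{Mp}_2(\mathbb{Z})$ acts on $\tau$ and not on $S$. The Kudla--Millson series $\Theta_{\mathrm{KM}}(\tau)$ transforms, as a form on $\Gamma\backslash\mathbb{D}$, like a $\rho_{\mathcal{V}_\mathbb{Z}}$-valued modular form of weight $\mathrm{rk}(\mathcal{V}_\mathbb{Z})/2$; pulling back by $\Phi_\mathbb{V}$ gives the transformation law \eqref{eq:vv_mod_form} for $\Theta_\mathbb{V}(\tau)$ pointwise on $S$. Integrating that identity over the fixed domain $S$—legitimate because both sides are now $L^1$—yields the corresponding law for $Z_\mathbb{V}(\tau)$, and smoothness in $\tau$ follows by differentiating under the integral sign, justified by the uniform-in-$\tau$ cusp estimates. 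Hence $Z_\mathbb{V}(\tau)$ is a (possibly non-holomorphic) modular form of weight $1 + n/2$ valued in $\rho_{\mathcal{V}_\mathbb{Z}}$.
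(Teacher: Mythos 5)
Your strategy is essentially the paper's: reduce to a punctured disk with unipotent monodromy, discard the vectors with a nonzero component of weight $>2$ using the Hodge norm estimates, compare $\Theta_\mathbb{V}$ with the nilpotent orbit via the gauge factor $e^{zN}\psi_\mathbb{V}(t)e^{-zN}$ and the estimate \eqref{eq:dist_to_nilp_estimate}, and then kill the dangerous contribution of the isotropic directions by Poisson summation, observing that the zero Fourier mode of $\varphi^\circ_\mathbb{V}$ vanishes because the $-\Omega$ and $ih(s_v)\theta\wedge\overline\theta$ pieces cancel in the Gaussian integral (this is exactly the paper's computations \eqref{eq:Four_transf_const_term_type_II} and \eqref{eq:Four_transf_const_term_type_III}). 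You correctly identify the main obstacle and the mechanism that resolves it.

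There is one genuine gap. The explicit formulas you invoke --- \eqref{eq:Hodge_norm_explicit_2_type_II}, \eqref{eq:Chern_form_type_II}, \eqref{eq:Hodge_norm_explicit_2_type_III}, \eqref{eq:Chern_form_type_III} --- are derived in Sections \ref{subsection:type_ii_degenerations} and \ref{subsection:type_iii_degenerations} \emph{only under the assumption that the limit mixed Hodge structure is $\mathbb{R}$--split}, i.e.\ that $I^{a,b}=F^a_{\mathrm{lim}}\cap\overline{F^b_{\mathrm{lim}}}\cap W_{a+b,\mathbb{C}}$, which fails for a general degeneration. You therefore cannot apply them directly to $\mathbb{V}^{\mathrm{nilp}}$; you need the further comparison with the associated $\mathbb{R}$--split nilpotent orbit $\tilde{\mathbb{V}}^{\mathrm{nilp}}=e^{i\delta}\mathcal{F}^\bullet$, which the paper carries out in Lemmas \ref{lemma:bound_Hodge_norm_nilp_to_split_nilp_orbits} and \ref{lemma:bound_varphi_nilp_to_split_nilp_orbits} using Schmid's $\mathrm{SL}_2$--orbit theorem. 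This step is not of the same "soft" nature as the comparison of $\mathbb{V}$ with $\mathbb{V}^{\mathrm{nilp}}$: the error there is only $O((-\log|t|)^{-1})$ rather than $O(|t|^B)$, so one does not get exponential decay for free and must combine the bound with the a priori estimate $|\Theta'_\mu|_t=O(-\log|t|)$ to conclude that the difference is merely bounded (hence integrable). A smaller imprecision: your claim that for $m\le 0$ "the Gaussian already forces decay" is not right as stated --- for type III degenerations $W_2$ contains vectors of negative norm on which $h(s_v)$ is bounded but does not grow, and one still needs the Poisson-summation/zero-mode argument there; the paper handles this uniformly via \eqref{eq:Theta'_decomposition_type_III_3}.
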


The following remarks reduce the proof of this theorem to the analogous local question around each cusp. Moreover, they show that when addressing the local question we may assume the local monodromy to be unipotent and non-trivial.

\begin{enumerate}
\item Fix a point $s_0 \in S$ and a simply connected neighbourhood $U$ of $s_0$ and choose a trivialization of $\mathcal{V}_\mathbb{Z}|_{U}$. The Hodge metric on $\mathcal{V}|_{U}$ can then be identified with a smooth map from $U$ to the space of hermitian metrics on $\mathbb{C}^{n+2}$. Hence, after possibly shrinking $U$, the Hodge metric on $\mathcal{V}|_{U}$ is uniformly bounded below by some constant metric $v \mapsto |v|_0$ on $\mathbb{C}^{n+2}$. It follows that on such a neighbourhood we can find $\epsilon>0$ so that
$$
|\varphi_{\mathbb{V}}(v)|_s < e^{-\epsilon |v|_0^2}
$$
for every $s \in U$ and every flat section $v \in \mathcal{V}_\mathbb{Z}$ over $U$. By dominated convergence, this implies that the above proposition holds locally, that is, replacing $S$ by a small enough neighbourhood of any given point $s_0 \in S$. Taking a finite covering of $\overline{S}$ shows that it suffices to prove the proposition for a coordinate neighbourhood of each cusp, i.e. for $S \simeq \Delta^*$. In the rest of Section \ref{section:Integrability} we will assume that $S= \Delta^*$ and denote by $T$ the local monodromy as in Section \ref{subsection:local_monodromy}.

\item Recall that $T$ is quasi--unipotent: there exist positive integers $e$ and $m$ such that $(T^e-\mathrm{id})^m=0$. Let $\pi: \Delta^* \to \Delta^*$ be the covering map of degree $e$. Then $\pi^*\mathbb{V}$ is a PVHS over $\Delta^*$ with unipotent monodromy $T^e$. Since
$$
\Theta_{\pi^*\mathbb{V}}(\tau) = \pi^*\Theta_{\mathbb{V}}(\tau),
$$
the integrability of $\Theta_{\pi^*\mathbb{V}}(\tau)$ over $\Delta^*$ implies that of $\Theta_{\mathbb{V}}(\tau)$. It follows that we may assume that $T$ is unipotent.

\item Finally, recall from Section \ref{subsection:local_monodromy} that if $T=\mathrm{id}$, then the period map associated to the PVHS $\mathbb{V}$ extends to $\Delta$; in this case the argument in (1) proves the proposition. So in the rest of Section \ref{section:Integrability} we will assume that $T$ is unipotent and $T \neq 1$.

\end{enumerate}

So we will prove the proposition for $S=\Delta^*$ and a PVHS $\mathbb{V} \to \Delta^*$ with unipotent non-trivial monodromy. In order to ensure that the only degeneration of $\mathbb{V}$ happens as $t \to 0$, we may and do assume that $\mathbb{V}$ extends to a punctured disk centered at $0$ of radius strictly larger than one.

The proof is based on Schmid's Hodge norm estimates and his nilpotent orbit and $\mathrm{SL}_2$--orbit theorems. As a first step, let us write 
$$
\Theta_\mathbb{V}(\tau)_\mu = \Theta_\mathbb{V}(\tau)'_\mu + \Theta_\mathbb{V}(\tau)''_\mu,
$$ 
with
\begin{equation}
\begin{split}
\Theta_\mathbb{V}(\tau)'_\mu &= \sum_{\substack{ v \in \mu + \mathcal{V}_\mathbb{Z} \\ v \in W_2}} \varphi_\mathbb{V}(y^{1/2} v) \ e^{\pi i Q(v,v)x} \\
\Theta_\mathbb{V}(\tau)''_\mu &= \sum_{\substack{ v \in \mu + \mathcal{V}_\mathbb{Z} \\ v \notin W_2}} \varphi_\mathbb{V}(y^{1/2}v) \ e^{\pi i Q(v,v)x},
\end{split}
\end{equation}
and similarly we write $\Theta_\mathbb{V}^\circ(y)_{m,\mu} = \Theta_\mathbb{V}^\circ(y)'_{m,\mu} + \Theta_\mathbb{V}^\circ(y)''_{m,\mu}$.

It turns out that the integrability of $\Theta_\mathbb{V}(\tau)''_\mu$ and $\Theta^\circ_\mathbb{V}(y)''_{m,\mu}$ is easier to prove: it is a straightforward consequence of the Hodge norm estimates. The integrability of $\Theta_\mathbb{V}(\tau)'_\mu$ and $\Theta^\circ_\mathbb{V}(y)'_{m,\mu}$ is more delicate: our proof proceeds by bounding the difference $\varphi_{\mathbb{V}}(v)-\varphi_{\mathbb{V}^{\mathrm{nilp}}}(v)$ and computing $\varphi_{\mathbb{V}^{\mathrm{nilp}}}(v)$ explicitly.

\subsection{Integrability of $\Theta_{\mathbb{V}}''$} \label{subsection:Integrability_Theta''} Let us fix a $\mathbb{Z}$-PVHS $\mathbb{V}$ with $h^{2,0}=1$ over the punctured unit disk 
$$
\Delta^* = \{ t \in \mathbb{C} \ | \ 0 < |t|<1\}
$$
with unipotent non-trivial monodromy. We will first establish the integrability of $\Theta_\mathbb{V}(\tau)''$ and $\Theta^\circ_\mathbb{V}(\tau)''_m$. 

For this it suffices to work on a fixed angular sector 
\begin{equation}
U:= \{t \in \Delta^*| \epsilon < \arg(t) <2\pi-\epsilon\}  \subset \Delta^*.    
\end{equation}
In order to estimate the size of differential forms on $\Delta^*$, we work with the Poincar\'e metric, defined by declaring that the coframe $\tfrac{dt}{t \log |t|}$ and $\tfrac{d\overline{t}}{\overline{t}\log|t|}$ is unitary. In particular, a form $\alpha \in \Omega^{1,1}(\Delta^*)$ can be written as
$$
\alpha=\alpha_{11} \frac{dt d\overline{t}}{|t|^2 (\log |t|)^2}
$$
for a unique smooth function $\alpha_{11}$ on $\Delta^*$, and for $t \in \Delta^*$ we set
$$
|\alpha|_t := |\alpha_{11}(t)|.
$$
We say that $\alpha$ is rapidly decreasing if $|\alpha|_t=O(t^\epsilon)$ for some $\epsilon>0$ and $t$ in a given $U$.

Fix a basis $v_1,\ldots,v_{n+2}$ adapted to the weight filtration as in \ref{section:canonical extension} giving a trivialization $\mathcal{V}|_{U} \simeq \underline{\mathbb{C}}^{n+2}$  and denote by $h(t)=(h_{ij}(t))$ the matrix of the Hodge metric in this basis: for a flat section $v=a_1 v_1 + \ldots a_{n+2} v_{n+2}$ and $t \in U$ we have
\begin{equation} \label{eq:Hodge_norm_matrix_form}
\|v\|^2_t = a^* h(t) a =  \sum_{i,j} \overline{a_i}a_j h_{ij}(t).
\end{equation}

The basis $v_1,\ldots, v_{n+2}$ gives a splitting of the complexified weight filtration: writing
\begin{equation} \label{eq:weight_filt_splitting}
Y_k = \langle  v_{1+\dim W_{k-1}}, \ldots v_{\dim W_k} \rangle \subset V_\mathbb{C},
\end{equation}
we have $W_{k,\mathbb{C}} = Y_k \oplus W_{{k-1},\mathbb{C}}$ for all $k \geq 0$. Following K\'ollar \cite[Definition 5.3.(v)]{Kollar}, denote by 
$$
e: \mathcal{V}|_U \to \mathcal{V}|_U
$$ 
the endomorphism of the vector bundle $\mathcal{V}|_U$ that acts on the fiber $\mathcal{V}_t$ by
$$
v \mapsto (-\log|t|)^{(k-2)/2} v \quad \text{ if } v \in Y_k
$$
and set
$$
\tilde{h}={^t}e^{-1} h e^{-1}.
$$
It follows from the Hodge norm estimates that the entries $\tilde{h}_{ij}$ of $\tilde{h}$ and $(\det \tilde{h})^{-1}$ are bounded. More precisely, write $\mathcal{C}^\omega(\Delta)$ for the set of real analytic functions on $\Delta$ and $L$ for the set of Laurent polynomials in $(-\log |t|)^{1/2}$ with complex coefficients and define
$$
B\Delta = \{ f \in \mathcal{C}^\omega(\Delta) \otimes L \ | \ f \text{ bounded} \}.
$$
Then
\begin{equation} \label{eq:tilde_h_is_bounded}
\tilde{h}_{ij}, \ (\det \tilde{h})^{-1} \in B\Delta
\end{equation}
(cf. \cite[Prop. 5.4]{Kollar}). Moreover, $B\Delta$ is closed under the operators $t \log |t| \tfrac{d}{dt}$ and $\overline{t} \log |t| \tfrac{d}{d\overline{t}}$. Hence, in the coframe given by $\tfrac{dt}{t \log |t|}$ and $\tfrac{d\overline{t}}{\overline{t}\log|t|}$, the forms
\begin{equation} \label{eq:Hodge_components_nearly_bdd}
\partial \tilde{h}_{ij}, \overline{\partial} \tilde{h}_{ij}, \partial \overline{\partial} \tilde{h}_{ij}
\end{equation}
have components that belong to $B\Delta$; we will refer to forms with this property as nearly bounded (loc. cit., Def. 5.3). Note that the product of two nearly bounded forms is nearly bounded.

We can use the bounds \eqref{eq:tilde_h_is_bounded} and \eqref{eq:Hodge_components_nearly_bdd} to give an estimate for the form $\varphi_{\mathbb{V}}(v)$.

\begin{lemma}  \label{lemma:bound_varphi_V}
There exists a positive constant $C$ such that
$$
|\varphi_{\mathbb{V}}(v)|_t < C e^{-\pi \|v\|_t^2} \left(1+\|v\|_t^2 \right)
$$
for any $t \in U$ and any $v \in V_\mathbb{R}$.
\end{lemma}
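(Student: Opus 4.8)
The plan is to pull the Gaussian factor out of $\varphi_{\mathbb{V}}(v) = e^{-\pi\|v\|_t^2}\bigl(-\Omega + ih(s_v)\,\theta\wedge\overline\theta\bigr)$ and bound the remaining $(1,1)$-form by $C(1+\|v\|_t^2)$ in the Poincar\'e metric, uniformly in $t\in U$ and $v\in V_\mathbb{R}$. The first and most important step is to remove the apparent singularity coming from $\theta = \partial h(s_v)/h(s_v)$. Writing $ih(s_v)\theta\wedge\overline\theta = i\,\partial h(s_v)\wedge\overline\partial h(s_v)/h(s_v)$ suggests a blow-up where $h(s_v)\to 0$, but this is illusory: since $s_v$ is a holomorphic section of $\mathcal L^\vee$ with $h(s_v) = 2\|s_v\|^2_{\mathcal L^\vee}$, the Poincar\'e--Lelong formula gives $\partial\overline\partial\log h(s_v) = -2\pi i\,\Omega$ away from the isolated zeros of $s_v$. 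Combined with the algebraic identity $\partial\overline\partial\log h = h^{-1}\partial\overline\partial h - \theta\wedge\overline\theta$ this produces the rewriting
\[
-\Omega + ih(s_v)\,\theta\wedge\overline\theta = -\bigl(1+2\pi h(s_v)\bigr)\Omega + i\,\partial\overline\partial h(s_v),
\]
which holds as smooth forms on all of $U$ (both sides are smooth and agree off a discrete set) and in which no division by $h(s_v)$ survives. It then suffices to bound $|\Omega|_t$ by a constant and $|\partial\overline\partial h(s_v)|_t$ by $C\|v\|_t^2$: indeed the relation $\|v\|_t^2 = Q(v,v) + 2h(s_v)$ coming from the Hodge decomposition gives $0\le h(s_v)\le\|v\|_t^2$, so that $|1+2\pi h(s_v)|\le C(1+\|v\|_t^2)$ and the triangle inequality finishes the job.

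For the second-derivative term I would work in the flat frame $v_1,\dots,v_{n+2}$ adapted to the weight filtration, so that $\|v\|_t^2 = a^*h(t)a$ with $a$ a constant vector and $h(s_v) = \tfrac12\bigl(a^*h(t)a - Q(v,v)\bigr)$; hence $\partial\overline\partial h(s_v) = \tfrac12\,a^*(\partial\overline\partial h)\,a$. Substituting $h = {}^t e\,\tilde h\,e$ and expanding by the Leibniz rule, each resulting term is sandwiched by $a$ on both sides. A direct computation from $e_{kk} = (-\log|t|)^{(k-2)/2}$ shows that $e^{-1}\partial e$, $e^{-1}\overline\partial e$ and $e^{-1}\partial\overline\partial e$ have coefficients bounded in the Poincar\'e coframe, while $\tilde h$ and the forms in \eqref{eq:Hodge_components_nearly_bdd} are nearly bounded. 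Writing $b = ea$, every term therefore takes the shape $b^*M b$ with $M$ a matrix-valued $(1,1)$-form whose Poincar\'e coefficients lie in $B\Delta$, so that $|\partial\overline\partial h(s_v)|_t\le C\|b\|^2$. Finally the boundedness of the $\tilde h_{ij}$ together with that of $(\det\tilde h)^{-1}$ forces $\tilde h$ to be uniformly comparable to the identity, whence $\|b\|^2\asymp b^*\tilde h b = a^*h(t)a = \|v\|_t^2$ and $|\partial\overline\partial h(s_v)|_t\le C\|v\|_t^2$.

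It remains to bound $|\Omega|_t$. Choosing a holomorphic frame $\sigma$ of the canonical extension of $\mathcal L = \mathcal F^2$, the Hodge norm estimates give $\log\|\sigma\|_{\mathcal L}^2 = c\log(-\log|t|) + g$ with $g\in B\Delta$ and $c$ determined by the weight of the generator; since $\partial\overline\partial\log(-\log|t|)$ and $\partial\overline\partial g$ are nearly bounded, so is $\Omega = (2\pi i)^{-1}\partial\overline\partial\log\|\sigma\|^2_{\mathcal L}$, and in particular $|\Omega|_t\le C$. Combining the three estimates yields $|{-}\Omega + ih(s_v)\theta\wedge\overline\theta|_t\le C(1+\|v\|_t^2)$, and multiplying by $e^{-\pi\|v\|_t^2}$ gives the Lemma. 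The main obstacle is really the bookkeeping in the expansion of $a^*(\partial\overline\partial h)a$ and the passage $\|b\|^2\asymp\|v\|_t^2$; the genuinely delicate input, however, is Schmid's and K\'ollar's near-boundedness of $\tilde h$ and of the Hodge-bundle curvature, on which both the $\partial\overline\partial h(s_v)$ and the $\Omega$ estimates ultimately rest.
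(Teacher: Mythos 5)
Your proposal is correct and follows essentially the same route as the paper's proof: the same rewriting $-\Omega + ih(s_v)\theta\wedge\overline\theta = -(1+2\pi h(s_v))\Omega + i\,\partial\overline\partial h(s_v)$, the elementary bound $h(s_v)\le 2\|v\|_t^2$, and the estimate $|\partial\overline\partial h(s_v)|_t = O(\|v\|_t^2)$ obtained by conjugating by the rescaling matrix $e$ and using the near-boundedness of $\tilde h$ and its derivatives. The only difference is that you re-derive in place two facts the paper simply cites, namely K\'ollar's bound $\sum|a_i a_j|e_{ij}(t)=O(\|v\|_t^2)$ (via the uniform comparability $\tilde h \asymp I$ coming from boundedness of $\tilde h_{ij}$ and $(\det\tilde h)^{-1}$) and Zucker's boundedness of $\Omega$ in the Poincar\'e metric, which is fine.
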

\begin{proof}
Let 
\begin{equation} \label{eq:def_poly_p_V}
p_\mathbb{V}(v) = \left(-\Omega+ i h(s_v)\theta \wedge \overline{\theta} \right), \quad \theta := \frac{\partial h(s_v)}{h(s_v)}.
\end{equation}
Since $\varphi_\mathbb{V}(v) = e^{-\pi \|v\|_t^2} p_\mathbb{V}(v)$, it suffices to show that
$$
|p_\mathbb{V}(v)|_t < C(1+\|v\|_t^2)
$$
for $t \in U$ and $v \in V_\mathbb{R}$. Now the form $\Omega$ is the first Chern form of the hermitian line bundle $F^2$. It is known to be bounded when the monodromy is unipotent \cite[Prop. 1.11]{Zucker}; that is, $|\Omega|_t$ is bounded on $\Delta^*$. To estimate the term $h(s_v) \theta \wedge \overline{\theta}$, recall that
$$
-2\pi i \Omega = \partial \overline{\partial} \log h(s_v) = \frac{\partial \overline{\partial}h(s_v)}{h(s_v)} - \theta \wedge \overline{\theta}.
$$
Multiplying by $h(s_v)$ gives
\begin{equation} \label{eq:simplify_p_V}
h(s_v) \theta \wedge \overline{\theta} = 2\pi i h(s_v) \Omega + \partial \overline{\partial} h(s_v).
\end{equation}
Since $h(s_v)=2 \|v^{2,0}\|_t^2 \leq 2 \|v\|^2_t$, we have
$
|h(s_v)\Omega|_t \leq 2\|v\|_t^2 |\Omega|_t
$,
and so it remains to estimate the term $\partial \overline{\partial} h(s_v)$. Writing $v_t=\Sigma v^{p,q}_t$ for the Hodge decomposition of $v_t \in \mathcal{V}_t$, we  have $h(s_v)=-2Q(v_t^{2,0},v_t^{0,2})$ and hence
\begin{equation} \label{eq:relation_Q_h_and_majorant}
\|v\|_t^2 = Q(v^{1,1}_t,v^{1,1}_t)- 2Q(v_t^{2,0},v_t^{0,2}) = Q(v,v) + 2h(s_v)
\end{equation}
and
$$
\partial \overline{\partial}h(s_v) = 2^{-1} \partial \overline{\partial} \|v\|^2_t = 2^{-1} \sum \overline{a_i} a_j \partial \overline{\partial}h_{ij}(t).
$$
Let us next give a bound for the forms $\partial \overline{\partial}h_{ij}(t)$. We have $\tilde{h}_{ij}=e^{-1}_{ij}h_{ij}$, where $e_{ij}(t)=(-\log |t|)^{a_{ij}/2}$ and $a_{ij}$ is the integer defined by $a_{ij}=(m-2)+(l-2)$ if $v_i \in W_{m}-W_{m-1}$ and $v_j \in W_{l}-W_{l-1}$. A direct computation shows that the forms $e_{ij}^{-1} \partial e_{ij}$, $e_{ij}^{-1} \overline{\partial} e_{ij}$ and $e_{ij}^{-1} \partial \overline{\partial} e_{ij}$ are nearly bounded; writing
\begin{equation*}
\begin{split}
\partial \overline{\partial} h_{ij} = \tilde{h}_{ij} \partial \overline{\partial} e_{ij} + \partial \tilde{h}_{ij} \overline{\partial} e_{ij} - \overline{\partial}\tilde{h}_{ij} \partial e_{ij} + e_{ij} \partial \overline{\partial} \tilde{h}_{ij}
\end{split}
\end{equation*}
and applying \eqref{eq:Hodge_components_nearly_bdd} shows that $e_{ij}^{-1} \partial \overline{\partial}h_{ij}$ is nearly bounded too. Thus
$$
|\partial \overline{\partial} h(s_v)|_t = O(\sum |a_i  a_j| e_{ij}(t)),
$$
and by \cite[Lemma 5.6]{Kollar}, we have 
\begin{equation} \label{eq:Kollar_bound_1}
\sum |a_i a_j| e_{ij}(t) = O(\|v\|_t^2).
\end{equation}
This finishes the proof.
\end{proof}

The lemma implies the (very coarse) bound
\begin{equation} \label{eq:theta_no_w_2_bound_1}
\begin{split}
|\Theta_\mathbb{V}(\tau)''_\mu|_t & \leq C  \sum_{\substack{v \in \mu + \mathcal{V}_\mathbb{Z} \\ v \notin W_2}}
 e^{-\pi y  \|v\|_t^2} (1+ y \|v\|_t^2) \\
& \leq C' \sum_{ \substack{v \in \mathcal{V}_\mathbb{Z}^\vee \\ v \notin W_2}} e^{-\pi y \|v\|_t^2/2},
\end{split}
\end{equation}
for some constant $C'>0$ depending only on $\mathbb{V}$ and the basis $(v_i)$. Using this bound we can prove the integrability of $\Theta_\mathbb{V}(\tau)''_\mu$.
\begin{proposition} \label{prop:theta_no_w_2_bound_1}
For any $m$ and $\mu$, the forms $\Theta_\mathbb{V}(\tau)''_\mu$ and $\Theta_\mathbb{V}^\circ(y)_{m,\mu}''$ are rapidly decreasing as $t \to 0$, uniformly on any angular sector $U$. We have
$$
\int_{\Delta^*} \Theta_\mathbb{V}(\tau)''_\mu = \sum_{m} \left( \int_{\Delta^*} \Theta_\mathbb{V}^\circ(y)''_{m,\mu} \right) \cdot q^m.
$$
\end{proposition}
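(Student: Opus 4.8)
The plan is to deduce both assertions from the majorant bound \eqref{eq:theta_no_w_2_bound_1} together with Schmid's Hodge norm estimates. The starting point is the two--sided comparison of the Hodge norm with its majorant that already underlies Lemma~\ref{lemma:bound_varphi_V}: by \eqref{eq:tilde_h_is_bounded} the rescaled Gram matrix $\tilde h$ and its inverse are bounded on the sector $U$, so, writing $v=\sum_i a_i v_i$ in the basis adapted to the weight filtration with $v_i\in W_{k_i}-W_{k_i-1}$, there are constants $0<c\le C$ uniform on $U$ with
\[
c\sum_i |a_i|^2(-\log|t|)^{k_i-2}\ \le\ \|v\|_t^2\ \le\ C\sum_i |a_i|^2(-\log|t|)^{k_i-2},\qquad t\in U.
\]
I will exploit this in two complementary ways. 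First, since $v\notin W_2$ precisely when the coordinates $a_i$ with $k_i\ge 3$ do not all vanish, the image $\bar v$ of such a $v$ in $V/W_2$ is a nonzero vector of the lattice $\bar\Lambda$ (a lattice because $W_2$ is rational), and the lower bound gives $\|v\|_t^2\ge c'(-\log|t|)\,|\bar v|_0^2$ for a fixed norm $|\cdot|_0$ on $V/W_2$ and $|t|$ small. Second, comparing $\mathcal V_\mathbb{Z}^\vee$ with a lattice $\Lambda'\supseteq\mathcal V_\mathbb{Z}^\vee$ which is a product in the $a_i$--coordinates (clear denominators), the full Gaussian sum $\sum_{v\in\mathcal V_\mathbb{Z}^\vee}e^{-\pi y\|v\|_t^2/4}$ factors over the coordinates and grows at most polynomially in $-\log|t|$, the growth arising only from the directions in $W_0$ and $W_1$ along which $(-\log|t|)^{k_i-2}\to 0$.

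Combining these, I would bound the majorant in \eqref{eq:theta_no_w_2_bound_1} by writing $e^{-\pi y\|v\|_t^2/2}=e^{-\pi y\|v\|_t^2/4}\cdot e^{-\pi y\|v\|_t^2/4}$ and estimating the second factor, via $|\bar v|_0^2\ge\lambda_0>0$ for nonzero $\bar v\in\bar\Lambda$, by the $t$--independent quantity $e^{-\pi y c'\lambda_0(-\log|t|)/4}=|t|^{\epsilon}$ with $\epsilon=\pi y c'\lambda_0/4>0$; the remaining sum is controlled by the product--lattice estimate. This yields $|\Theta_\mathbb{V}(\tau)''_\mu|_t\le C''|t|^{\epsilon}(-\log|t|)^{d}$ uniformly on $U$ for some $d\ge 0$, so $\Theta_\mathbb{V}(\tau)''_\mu$ is rapidly decreasing. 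The same estimate applies to the subsum $\Theta^\circ_\mathbb{V}(y)''_{m,\mu}$ once one notes, using $\varphi^\circ_\mathbb{V}=e^{\pi Q(v,v)}\varphi_\mathbb{V}$ and $Q(v,v)=2m$, that $|\varphi^\circ_\mathbb{V}(y^{1/2}v)|_t\lesssim e^{2\pi y m}e^{-\pi y\|v\|_t^2}(1+y\|v\|_t^2)$. Integrability over $\Delta^*$ is then immediate, the only issue being near the puncture, where rapid decay against the finite Poincar\'e volume forces convergence.

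Finally, to justify the stated $q$--expansion I would interchange the sum over $m$ with the integral by dominated convergence. The key observation is that $q^m$ is constant in $t$ with $|q^m|=e^{-2\pi m y}$, and on the locus $Q(v,v)=2m$ this factor cancels exactly the $e^{2\pi y m}$ in $\varphi^\circ_\mathbb{V}$, so that $e^{-2\pi m y}|\varphi^\circ_\mathbb{V}(y^{1/2}v)|_t=|\varphi_\mathbb{V}(y^{1/2}v)|_t$. Hence $\sum_m |q^m|\,|\Theta^\circ_\mathbb{V}(y)''_{m,\mu}|_t\le\sum_{v\notin W_2}|\varphi_\mathbb{V}(y^{1/2}v)|_t$, which is the majorant already bounded by $O(|t|^\epsilon(-\log|t|)^d)$ and hence integrable; Fubini--Tonelli then permits the exchange and gives the formula. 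I expect the genuinely delicate point to be the majorant estimate of the first two paragraphs: one must extract the decaying factor $|t|^{\epsilon}$ forced by the condition $v\notin W_2$ while simultaneously controlling the mild polynomial growth in $-\log|t|$ produced by the directions in $W_0\oplus W_1$ along which the Hodge norm shrinks. The interchange of summation and integration is then a routine application of Fubini's theorem.
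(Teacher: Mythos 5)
Your proposal is correct and follows essentially the same route as the paper: the lower bound for the Hodge norm in terms of the graded pieces of the weight filtration (the paper's \eqref{eq:Hodge_metric_lower_bound_1}), the observation that $v \notin W_2$ forces a factor $(-\log|t|)^{\geq 1}$ in the exponent yielding the decay $|t|^{\epsilon}$, Poisson summation to tame the polynomial growth from the $W_0$, $W_1$ directions, and dominated convergence for the interchange of sum and integral. Your device of splitting the Gaussian into two halves and minimizing the quotient norm on $V/W_2$ is only a cosmetic variant of the paper's direct observation that the sum over the nonzero components in $Y_{4-l}^{\mathbb{Z}}$ is rapidly decreasing.
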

\begin{proof}
Fix a basis $v_1,\ldots,v_{n+2}$ of $\mathcal{V}^\vee_\mathbb{Z}|_U$ adapted to the weight filtration and denote by $|\cdot|$ the metric on $\mathcal{V}$ obtained from the standard metric on $\mathbb{C}^{n+2}$ via the corresponding trivialization $\mathcal{V}|_U \simeq \underline{\mathbb{C}}^{n+2}$. Define $Y_k$ as in \eqref{eq:weight_filt_splitting}. For $v \in \mathcal{V}_\mathbb{Q}$, let us write $v=\sum v_k$ with $v_k \in Y_k$. It follows from \eqref{eq:tilde_h_is_bounded} that there is a positive constant $c$, depending only on $\mathbb{V}$ and the basis $(v_i)$, such that for all $v \in \mathcal{V}_\mathbb{Q}$ we have
\begin{equation} \label{eq:Hodge_metric_lower_bound_1}
\|v\|_t^2 > 2c \sum_k |v_k|^2 (-\log |t|)^{k-2}.
\end{equation}

Combined with \eqref{eq:theta_no_w_2_bound_1}, this immediately implies the following bound: let us write $Y_k^\mathbb{Z} = Y_k \cap \mathcal{V}_\mathbb{Z}^\vee$ and let $l=1$ if $\mathbb{V}$ is of type II and $l=0$ if $\mathbb{V}$ is of type III; then  
$$
W_2 \cap \mathcal{V}_\mathbb{Z}^\vee = Y_2^\mathbb{Z} \oplus (W_1 \cap \mathcal{V}_\mathbb{Z}^\vee) = Y_2^\mathbb{Z} \oplus Y_{l}^\mathbb{Z}
$$
and
$$
\mathcal{V}_\mathbb{Z}^\vee = Y_{4-l}^\mathbb{Z} \oplus (W_2 \cap \mathcal{V}_\mathbb{Z}^\vee) = Y_{4-l}^\mathbb{Z} \oplus Y_2^\mathbb{Z}  \oplus Y_{l}^\mathbb{Z}
$$
and we have
\begin{equation} \label{eq:theta_no_w_2_bound_2}
\begin{split}
|\Theta_\mathbb{V}(\tau)''_\mu|_t & \leq C' \sum_{ \substack{v \in \mathcal{V}_\mathbb{Z}^\vee \\ v \notin W_2}}  e^{-\pi y \|v\|_t^2/2} \\
& = C' \sum_{\substack{0 \neq u \in Y_{4-l}^\mathbb{Z} \\ v \in Y_2^\mathbb{Z} \\ w \in Y_{l}^\mathbb{Z}}} e^{-\pi y \|u+v+w\|_t^2/2} \\
& <  C' \sum_{0 \neq u \in Y_{4-l}^\mathbb{Z}} e^{-\pi c y |u|^2 (-\log |t|)^{2-l}} \\ & \quad \times \sum_{v \in Y_2^\mathbb{Z}} e^{-\pi c y |v|^2} \\ & \quad \times \sum_{w \in Y_{l}^\mathbb{Z}} e^{-\pi c y |w|^2 (-\log |t|)^{l-2}}.
\end{split}
\end{equation}
In the last expression, the sum over $u$ is clearly rapidly decreasing as $t \to 0$, and the sum over $v$ is independent of $t$. It remains to estimate the sum over $w$. This can be done by Poisson summation: since $Y_{l}^\mathbb{Z} = W_1 \cap \mathcal{V}_\mathbb{Z}^\vee$ is a lattice of rank $l+1$, we have
\begin{equation} \label{eq:trivial_Poisson_estimate}
\sum_{w \in Y_{l}^\mathbb{Z}} e^{-\pi c y |w|^2 (-\log |t|)^{l-2}} = O\left(-\log |t| \right).
\end{equation}
This shows that $\Theta_\mathbb{V}(\tau)''_\mu$ and $\Theta_\mathbb{V}^\circ(y)''_{m,\mu}$ are rapidly decreasing as $t \to 0$. The identity in the statement follows by dominated convergence.
\end{proof}

Note that \eqref{eq:theta_no_w_2_bound_2} and \eqref{eq:trivial_Poisson_estimate} give the bound
\begin{equation}\label{eq:trivial_estimate_Theta'}
\begin{split}
|\Theta_\mathbb{V}(\tau)'_\mu|_t &= O\left( \sum_{v \in \mathcal{V}^\vee_\mathbb{Z} \cap W_2} e^{-\pi y \|v\|_t^2/2} \right) \\
&= O(-\log|t|).
\end{split}
\end{equation}
This estimate will be useful later but it is not enough to guarantee the integrability of $\Theta_\mathbb{V}(\tau)'_\mu$.

\subsection{Reduction to nilpotent orbits}
\label{subsection:reduction_to_nilpotent_orbits}
Following the strategy outlined at the end of \ref{section:main_convergence}, we must now consider the integrability of $\Theta_{\mathbb{V}}(\tau)'$ and $\Theta^\circ_{\mathbb{V}}(y)'_m$. Our next goal is to prove the following Proposition, which shows that it is enough to consider the case where $\mathbb{V}$ is a nilpotent orbit.

\begin{proposition} \label{prop:theta'_difference_bound}
Let $\mathbb{V} \to \Delta^*$ be a weight two polarized variation of Hodge structure with $h^{2,0}=1$. Assume that the monodromy is unipotent and non--trivial and let $\mathbb{V}^\mathrm{nilp}$ be the corresponding nilpotent orbit. For any $\tau \in \mathbb{H}$ and any $m$ and $\mu$, the forms
$$
\Theta_{\mathbb{V}}(\tau)'_\mu - \Theta_{\mathbb{V}^\mathrm{nilp}}(\tau)'_\mu \ 
\text{ and } \ \Theta_{\mathbb{V}}^\circ(y)'_{m,\mu} - \Theta_{\mathbb{V}^\mathrm{nilp}}^\circ(y)'_{m,\mu}  \in \Omega^{1,1}(\Delta^*)
$$
are rapidly decreasing as $t \to 0$. We have
$$
\int_{\Delta^*} \Theta_{\mathbb{V}}(\tau)'_\mu - \Theta_{\mathbb{V}^\mathrm{nilp}}(\tau)'_\mu = \sum_m \left( \int_{\Delta^*} \Theta_{\mathbb{V}}^\circ(y)'_{m,\mu} - \Theta_{\mathbb{V}^\mathrm{nilp}}^\circ(y)'_{m,\mu} \right) \cdot q^m.
$$
\end{proposition}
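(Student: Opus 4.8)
The plan is to reduce the whole estimate to the infinitesimal comparison \eqref{eq:map_relating_V_and_nilp_orbit} between the Hodge filtration of $\mathbb{V}$ and that of its nilpotent orbit. Recall that $\varphi_\mathbb{V}(v)=e^{-\pi\|v\|_\mathcal{V}^2}(-\Omega+ih(s_v)\,\theta\wedge\overline\theta)$, so each summand of $\Theta_\mathbb{V}(\tau)'_\mu-\Theta_{\mathbb{V}^\mathrm{nilp}}(\tau)'_\mu$ is the difference of two such expressions, evaluated at $y^{1/2}v$ for $v\in(\mu+\mathcal{V}_\mathbb{Z})\cap W_2$ and at a fixed modular parameter $\tau=x+iy$. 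All of the building blocks $\|v\|_\mathcal{V}^2$, $h(s_v)$, $\Omega$ and $\theta$ are manufactured from the $(2,0)$--line of the relevant Hodge structure at the base point $t\in\Delta^*$, and by \eqref{eq:map_relating_V_and_nilp_orbit} the two $(2,0)$--lines differ by the operator $g(t)=e^{zN}\psi_\mathbb{V}(t)e^{-zN}$, which by \eqref{eq:dist_to_nilp_estimate} satisfies $|g(t)-1|=O(|t|(\log|t|)^{2k})$. First I would record the corresponding estimate for the Poincaré--coframe components of $\partial g$: since $g$ is holomorphic with $g(0)=1$, a direct computation (using $dz/dt=1/2\pi it$ and the nilpotence of $N$) shows that $t\log|t|\,\partial g/\partial t$ is again of the form $O(|t|(\log|t|)^{2k'})$, i.e.\ it is nearly bounded in the sense of \autoref{subsection:Integrability_Theta''} \emph{and} carries an extra factor $|t|$. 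The Hodge metric depends on both $g$ and $\overline g$, so the $\partial$--, $\overline\partial$-- and $\partial\overline\partial$--derivatives of the Hodge data all inherit this gain.

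Second, I would propagate these estimates through the individual terms. Writing $v^{2,0}_{\mathbb{V},t}$ and $v^{2,0}_{\mathbb{V}^\mathrm{nilp},t}$ for the two $(2,0)$--projections of a fixed flat $v$, one has $v^{2,0}_{\mathbb{V},t}-v^{2,0}_{\mathbb{V}^\mathrm{nilp},t}=O(|t|(\log|t|)^{2k})$ times a nearly bounded quantity, and hence the same bound holds for $h(s_v)_\mathbb{V}-h(s_v)_{\mathbb{V}^\mathrm{nilp}}$ and $\|v\|^2_{\mathcal{V}}-\|v\|^2_{\mathcal{V}^\mathrm{nilp}}$, and—after applying $\partial$ and $\overline\partial$ and invoking the nearly--bounded formalism exactly as in the proof of \autoref{lemma:bound_varphi_V}—for $\Omega_\mathbb{V}-\Omega_{\mathbb{V}^\mathrm{nilp}}$ and $\theta_\mathbb{V}-\theta_{\mathbb{V}^\mathrm{nilp}}$ as well. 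Because $g$ is close to the identity the two Hodge norms are uniformly comparable, so a mean--value argument applied to $\varphi_\mathbb{V}(y^{1/2}v)-\varphi_{\mathbb{V}^\mathrm{nilp}}(y^{1/2}v)$ yields a pointwise bound of the shape
\begin{equation*}
\left|\varphi_\mathbb{V}(y^{1/2}v)-\varphi_{\mathbb{V}^\mathrm{nilp}}(y^{1/2}v)\right|_t\le C\,|t|(\log|t|)^{2k}\,e^{-\pi y\|v\|_t^2/2}\,(1+y\|v\|_t^2),
\end{equation*}
uniformly for $t$ in a fixed angular sector $U$, where $\|\cdot\|_t$ denotes either Hodge norm and $C$ depends only on $\mathbb{V}$, the chosen basis and the fixed $\tau$.

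Third, I would sum over $v\in(\mu+\mathcal{V}_\mathbb{Z})\cap W_2$. For such $v$ the Hodge norm does not blow up as $t\to0$—this is exactly why the $W_2$--part is delicate and the Gaussian alone does not force decay—so the decay must come from the explicit factor $|t|(\log|t|)^{2k}$. Splitting $W_2\cap\mathcal{V}_\mathbb{Z}^\vee=Y_2^\mathbb{Z}\oplus(W_1\cap\mathcal{V}_\mathbb{Z}^\vee)$ as in the proof of \autoref{prop:theta_no_w_2_bound_1} and using \eqref{eq:Hodge_metric_lower_bound_1}, the $Y_2^\mathbb{Z}$--sum is bounded by a convergent $t$--independent Gaussian series while the $W_1$--sum is controlled by Poisson summation as in \eqref{eq:trivial_Poisson_estimate}, contributing at worst a factor $O(-\log|t|)$ (the extra polynomial weight $1+y\|v\|_t^2$ is absorbed into the same Gaussian sums at the cost of further powers of $-\log|t|$). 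The total bound is therefore $|t|(\log|t|)^{2k}\cdot O(-\log|t|)=O(|t|^\epsilon)$ for any $\epsilon<1$, proving that both displayed differences are rapidly decreasing; applying the estimate coefficient--by--coefficient in $x$ handles $\Theta^\circ_\mathbb{V}(y)'_{m,\mu}-\Theta^\circ_{\mathbb{V}^\mathrm{nilp}}(y)'_{m,\mu}$, and the interchange of sum and integral in the $q$--expansion then follows by dominated convergence exactly as in \autoref{prop:theta_no_w_2_bound_1}.

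The main obstacle is the second step: one must upgrade the scalar estimate \eqref{eq:dist_to_nilp_estimate} to a genuinely nearly--bounded estimate for $g(t)-1$ and its derivatives carrying the extra factor $|t|$, and then track how this gain survives the two applications of $\partial\overline\partial$ hidden in $\Omega$ and in $\theta\wedge\overline\theta$. Once this bookkeeping is in place, the lattice--sum estimate is routine, being a minor variant of the argument already carried out for $\Theta_\mathbb{V}(\tau)''_\mu$.
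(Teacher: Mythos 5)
Your overall strategy is the same as the paper's: establish a pointwise bound of the form $|\varphi_\mathbb{V}(y^{1/2}v)-\varphi_{\mathbb{V}^\mathrm{nilp}}(y^{1/2}v)|_t\le C|t|^B e^{-\pi y\|v\|_t^2/2}(1+\text{poly}(\|v\|_t^2))$, then sum over $(\mu+\mathcal{V}_\mathbb{Z})\cap W_2$ using the splitting $Y_2^{\mathbb{Z}}\oplus(W_1\cap\mathcal{V}_\mathbb{Z}^\vee)$ and the Poisson-summation bound \eqref{eq:trivial_Poisson_estimate}, and conclude by dominated convergence. That skeleton is exactly the paper's (Lemma \ref{lemma:bound_diff_varphi_V_V_nilp} followed by the short proof of the Proposition), and your third step is fine.

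The weak point is the second step, specifically the sentence ``Because $g$ is close to the identity the two Hodge norms are uniformly comparable.'' This is precisely the content of the paper's Lemma \ref{lemma:dist_V_to_V_nilp}, and it does not follow from \eqref{eq:dist_to_nilp_estimate} alone. The estimate $|g(t)-1|=O(|t|(\log|t|)^{2k})$ is in a \emph{fixed} norm on $\mathrm{End}(V_\mathbb{C})$, whereas the quantity you need to control is the \emph{relative} error $\bigl|\|v\|^2_{\mathcal{V},t}-\|v\|^2_{\mathcal{V}^\mathrm{nilp},t}\bigr|/\|v\|^2_{\mathcal{V}^\mathrm{nilp},t}$, and for $v\in W_1$ (resp.\ $W_0$) the denominator degenerates like $(-\log|t|)^{-1}$ (resp.\ $(-\log|t|)^{-2}$). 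Moreover $g(t)\in G_\mathbb{C}$ rather than $G_\mathbb{R}$, so the two Hodge norms are not related by the naive equivariance $\|v\|_{g x}=\|g^{-1}v\|_{x}$; to compare them one must pass to real lifts $\phi_i:\mathbb{H}\to G_\mathbb{R}$ of the two period maps and bound the operator norm of $\phi_1(t)^{-1}\phi_2(t)-1$, which costs conjugation by a frame whose operator norm grows polynomially in $-\log|t|$. The loss is only log-polynomial and is absorbed by the factor $|t|$, so your route can be repaired, but as written the crucial relative bound is asserted rather than proved; the paper instead quotes Schmid's distance estimate \cite[Thm 4.12]{Schmid} (together with the comparison between operator norm and Riemannian distance on $\mathbb{D}$), which delivers exactly the needed statement. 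A secondary instance of the same issue occurs in your treatment of $\Omega_{\mathbb{V}}-\Omega_{\mathbb{V}^\mathrm{nilp}}$ and of $\theta$, where one divides by $h(s_v)\sim(-\log|t|)^{-j}$; the paper's proof of \eqref{lemma:proof_bound_diff_varphi_V_V_nilp_identity_2} handles this by writing $\Omega$ as $\partial\overline\partial\log\|v\|^2$ for a fixed $v\in W_1$ and invoking the Hodge norm asymptotics, a step your ``nearly bounded with an extra factor $|t|$'' bookkeeping would need to make explicit.
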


As a first step towards the proof let us establish an estimate for the difference between the Hodge norms $\|v\|_{\mathcal{V},t}$ and $\|v\|_{\mathcal{V}^\mathrm{nilp},t}$ of a flat section $v \in V_\mathbb{R}$ as $t \to 0$.

\begin{lemma}  \label{lemma:dist_V_to_V_nilp}
There are positive constants $A$, $B$ such that
$$
\left| \|v\|_{\mathcal{V},t}^2 - \|v\|_{\mathcal{V}^{\mathrm{nilp}},t}^2 \right| \leq  A|t|^B \|v\|_{\mathcal{V}^{\mathrm{nilp}},t}^2
$$
for every $t \in U$ and $v \in V_\mathbb{R}$.
\end{lemma}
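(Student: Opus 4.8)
The plan is to reduce the statement to a comparison of the functions $h(s_v)$ attached to the two variations and to control that comparison by the estimate \eqref{eq:dist_to_nilp_estimate} for the distance of $\mathbb{V}$ to its nilpotent orbit. Since the flat form $Q(v,v)$ is common to $\mathbb{V}$ and $\mathbb{V}^{\mathrm{nilp}}$, the identity \eqref{eq:relation_Q_h_and_majorant} gives
\[
\|v\|_{\mathcal{V},t}^2 - \|v\|_{\mathcal{V}^{\mathrm{nilp}},t}^2 = 2\bigl(h_{\mathcal{V}}(s_v)_t - h_{\mathcal{V}^{\mathrm{nilp}}}(s_v)_t\bigr),
\]
so it suffices to bound the right-hand side. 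First I would record the closed formula
\[
h(s_v)_t = \frac{2\,|Q(v,\overline{\omega_t})|^2}{-Q(\omega_t,\overline{\omega_t})},
\]
valid for any generator $\omega_t$ of the line $\mathcal{F}^2_t$ (both sides being invariant under rescaling $\omega_t$); it follows by projecting $v$ onto $\mathcal{V}^{2,0}_t=\langle\omega_t\rangle$ along $\mathcal{V}^{1,1}_t\oplus\mathcal{V}^{0,2}_t$ and using $h(s_v)=2\|v^{2,0}\|^2$. By \eqref{eq:map_relating_V_and_nilp_orbit} I may take $\omega_t=g(t)\,\omega^{\mathrm{nilp}}_t$ with $g(t)=e^{zN}\psi_{\mathbb{V}}(t)e^{-zN}$ and $\omega^{\mathrm{nilp}}_t$ a generator of $\mathcal{F}^2_{\mathrm{nilp},t}$.

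The heart of the argument is the following operator estimate. Writing $\delta=g(t)-1$, I claim that the operator norm $|\delta|_{\mathrm{nilp},t}$ of $\delta$ with respect to the nilpotent Hodge metric $\|\cdot\|_{\mathcal{V}^{\mathrm{nilp}},t}$ satisfies $|\delta|_{\mathrm{nilp},t}\le A'|t|^{B'}$ on $U$ for some $A',B'>0$. Indeed, \eqref{eq:dist_to_nilp_estimate} bounds the flat operator norm of $\delta$ by $O(|t|(\log|t|)^{2k})$ ($\mathrm{Re}(z)$ being bounded on $U$, the estimate applies), while Schmid's Hodge norm estimates show that on a graded piece of weight $j$ the Hodge metric differs from a fixed flat metric by a factor $(-\log|t|)^{(j-2)/2}$; since the weights satisfy $0\le j\le 4$, passing from the flat to the Hodge operator norm multiplies the bound by at most $(-\log|t|)^{2}$, and the resulting $O(|t|(\log|t|)^{2k+2})$ is $O(|t|^{B})$ for any $B<1$. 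In particular $\|\delta\,\omega^{\mathrm{nilp}}_t\|_{\mathcal{V}^{\mathrm{nilp}},t}\le A'|t|^{B'}\|\omega^{\mathrm{nilp}}_t\|_{\mathcal{V}^{\mathrm{nilp}},t}$.

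With this in hand the conclusion is elementary algebra. Abbreviating $P=Q(v,\overline{\omega^{\mathrm{nilp}}_t})$ and $D=-Q(\omega^{\mathrm{nilp}}_t,\overline{\omega^{\mathrm{nilp}}_t})=\|\omega^{\mathrm{nilp}}_t\|_{\mathcal{V}^{\mathrm{nilp}},t}^2$, and writing $Q(v,\overline{\omega_t})=P+P'$, $-Q(\omega_t,\overline{\omega_t})=D+D'$, the Cauchy–Schwarz inequality for the nilpotent Hodge metric (in the form $|Q(a,\overline b)|\le\|a\|_{\mathcal{V}^{\mathrm{nilp}},t}\|b\|_{\mathcal{V}^{\mathrm{nilp}},t}$, which follows from $Q(a,\overline b)=\langle C^{\mathrm{nilp}}_t a,b\rangle$) together with the operator estimate gives $|P|\le\|v\|_{\mathcal{V}^{\mathrm{nilp}},t}\sqrt D$, $|P'|\le A'|t|^{B'}\|v\|_{\mathcal{V}^{\mathrm{nilp}},t}\sqrt D$ and $|D'|\le C|t|^{B'}D$; for $t$ small this yields $D+D'\ge D/2$ and, expanding
\[
h_{\mathcal{V}}(s_v)_t-h_{\mathcal{V}^{\mathrm{nilp}}}(s_v)_t=2\Bigl(\frac{|P+P'|^2}{D+D'}-\frac{|P|^2}{D}\Bigr),
\]
a term-by-term bound produces $\le A|t|^{B}\|v\|_{\mathcal{V}^{\mathrm{nilp}},t}^2$, as desired. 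I expect the main obstacle to be the second paragraph—converting the flat-norm estimate \eqref{eq:dist_to_nilp_estimate} into a bound on the Hodge operator norm of $\delta$ uniform on $U$—since this is precisely where the anisotropy of the degenerating Hodge metric (the weight-dependent powers of $-\log|t|$) must be shown to be harmless, being dominated by the genuine decay $|t|$.
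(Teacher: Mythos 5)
Your proof is correct, but it follows a genuinely different route from the paper's. The paper lifts the two period maps to maps $\phi_1,\phi_2:\mathbb{H}\to G_\mathbb{R}$, uses the equivariance $\|gv\|_{gx}=\|v\|_x$ to reduce the lemma to the bound $|\,\|\phi_1(t)^{-1}\phi_2(t)\|^2-1|\le A|t|^B$ for the operator norm at a fixed base point, and then quotes the quantitative form of Schmid's nilpotent orbit theorem (distance $O(|t|^B)$ between $\Phi_\mathbb{V}(t)$ and the nilpotent orbit in the invariant metric on $\mathbb{D}$, together with the comparison between Riemannian distance and operator norm). You instead exploit $h^{2,0}=1$ to express everything through the generator of $\mathcal{F}^2$ via the closed formula $h(s_v)=2|Q(v,\overline{\omega})|^2/(-Q(\omega,\overline{\omega}))$ and \eqref{eq:relation_Q_h_and_majorant}, relate the two generators by the $G_\mathbb{C}$-valued factor $g(t)=e^{zN}\psi_\mathbb{V}(t)e^{-zN}$ of \eqref{eq:map_relating_V_and_nilp_orbit}, and upgrade the flat estimate \eqref{eq:dist_to_nilp_estimate} to a Hodge-metric operator bound by paying a factor $(-\log|t|)^2$ coming from the weight-graded comparison of the Hodge metric with a flat metric (which is exactly the content of \eqref{eq:tilde_h_is_bounded} and \eqref{eq:Hodge_metric_lower_bound_1}, so is available); the concluding perturbation algebra with Cauchy--Schwarz is sound. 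What each approach buys: the paper's argument is shorter, independent of the Hodge numbers, and conceptually cleaner, but it leans on the harder quantitative nilpotent orbit theorem and an implicit distance-versus-operator-norm comparison on $\mathbb{D}$; yours needs only the holomorphy of $\psi_\mathbb{V}$ at $0$ (the ``trivial'' estimate) plus the Hodge norm growth estimates already used elsewhere in the paper, at the cost of being tied to $h^{2,0}=1$ and of a more explicit computation. Two small points worth making explicit if you write this up: the two-sided comparison $\|w\|_{\mathcal{V}^{\mathrm{nilp}},t}^2\asymp\sum_j(-\log|t|)^{j-2}|w_j|^2$ that your norm conversion uses must be invoked for the nilpotent orbit (it holds, since $\mathbb{V}^{\mathrm{nilp}}$ is itself a PVHS with unipotent monodromy and the same weight filtration), and the case of $t\in U$ bounded away from $0$ should be dispatched by continuity and compactness, as both metrics are smooth and mutually comparable there.
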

\begin{proof}

Let $\pi:\mathbb{H} \to \Delta^*$ be the uniformizing map $z \mapsto t=e^{2\pi i z}$ and let $\Phi_1:\mathbb{H} \to \mathbb{D}$ and $\Phi_2:\mathbb{H} \to \mathbb{D}$ be the period maps induced by $\pi^*\mathbb{V}$ and $\pi^*\mathbb{V}^{\mathrm{nilp}}$ respectively. Let us write $G_\mathbb{R}=\mathrm{SO}(V_\mathbb{R},Q)$ and fix $z_0 \in \mathbb{D}$. Pick differentiable lifts
$$
\phi_1, \phi_2: \mathbb{H} \to G_\mathbb{R}
$$
of $\Phi_1$ and $\Phi_2$, i.e. maps $\phi_i$ that satisfy
$$
\Phi_i(z) = \phi_i(z)x_0, \quad z \in \mathbb{H}, \quad i=1,2.
$$
(For example, use the Iwasawa decomposition of $G_\mathbb{R}$.) Writing $\|\cdot\|_x$ for the euclidean metric on $V_\mathbb{R}$ corresponding to $x \in \mathbb{D}$, we have the equivariance property $\|gv\|_{gx}=\|v\|_
x$ for all $g \in G_\mathbb{R}$. Hence
$$
\|v\|^2_{\mathcal{V},t} = \|v\|^2_{\Phi_1(t)} = \|\phi_1(t)^{-1}v\|^2_{x_0}.
$$
and similarly $\|v\|^2_{\mathcal{V}^\mathrm{nilp},t}=\|\phi_2(t)^{-1}v\|_{x_0}^2$. Thus it suffices to prove that
$$
|\|\phi_1(t)^{-1}\phi_2(t) v\|_{x_0}^2-\|v\|^2_{x_0}| \leq A|t|^B \|v\|^2_{x_0}
$$
for all $v \in V_\mathbb{R}$, or equivalently to show that the norm $\|\phi_1(t)^{-1}\phi_2(t)\|$ of the operator $\phi_1(t)^{-1}\phi_2(t) \in \mathrm{End}(V_\mathbb{R})$ satisfies
$$
|\|\phi_1(t)^{-1}\phi_2(t)\|^2-1| \leq A|t|^B.
$$
This follows directly from Schmid's nilpotent orbit theorem \cite[Thm 4.12]{Schmid} (see also p. 244 of loc. cit. for a comparison between the operator norm and Riemannian distance on $\mathbb{D}$).
\end{proof}

Lemma \ref{lemma:dist_V_to_V_nilp} leads to the following upper bound for the difference between $\varphi_{\mathbb{V}}$ and $\varphi_{\mathbb{V}^\mathrm{nilp}}$.

\begin{lemma} \label{lemma:bound_diff_varphi_V_V_nilp}
There exist positive constants $A$, $B$ and $C$ such that
$$
|\varphi_\mathbb{V}(v) - \varphi_{\mathbb{V}^\mathrm{nilp}}(v)|_t < C |t|^B e^{-\pi(1-A|t|^B)\|v\|_{\mathcal{V},t}^2} (1+\|v\|_{\mathcal{V},t}^4)
$$
for any $t \in U$ and any $v \in V_\mathbb{R}$.
\end{lemma}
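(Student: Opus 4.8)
The plan is to compare the two forms factor by factor, separating the Gaussian factor from the polynomial prefactor and estimating each contribution to the difference. Writing $\varphi_\mathbb{V}(v) = e^{-\pi\|v\|_{\mathcal{V},t}^2}p_\mathbb{V}(v)$ with $p_\mathbb{V}(v)$ as in \eqref{eq:def_poly_p_V}, and likewise for $\mathbb{V}^\mathrm{nilp}$, I would telescope
\begin{align*}
\varphi_\mathbb{V}(v) - \varphi_{\mathbb{V}^\mathrm{nilp}}(v) &= \bigl(e^{-\pi\|v\|_{\mathcal{V},t}^2} - e^{-\pi\|v\|_{\mathcal{V}^\mathrm{nilp},t}^2}\bigr)p_\mathbb{V}(v) \\
&\quad + e^{-\pi\|v\|_{\mathcal{V}^\mathrm{nilp},t}^2}\bigl(p_\mathbb{V}(v) - p_{\mathbb{V}^\mathrm{nilp}}(v)\bigr)
\end{align*}
and bound the two summands separately. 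Throughout I would invoke Lemma \ref{lemma:dist_V_to_V_nilp} to pass freely between the two Hodge norms, which are comparable with ratio $1 + O(|t|^B)$; in particular $\|v\|_{\mathcal{V}^\mathrm{nilp},t}^2 \ge (1 - A|t|^B)\|v\|_{\mathcal{V},t}^2$, which already explains the exponent $(1-A|t|^B)$ in the statement.

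For the first summand the estimate is essentially immediate. The mean value inequality $|e^{-\pi a} - e^{-\pi b}| \le \pi|a-b|\,e^{-\pi\min(a,b)}$ combined with Lemma \ref{lemma:dist_V_to_V_nilp} yields a factor $|t|^B\|v\|_{\mathcal{V},t}^2$ from the difference of exponents and a factor $e^{-\pi(1-A|t|^B)\|v\|_{\mathcal{V},t}^2}$ from the minimum. Multiplying by the bound $|p_\mathbb{V}(v)|_t < C(1+\|v\|_{\mathcal{V},t}^2)$ established inside the proof of Lemma \ref{lemma:bound_varphi_V} produces exactly the quartic growth $\|v\|_{\mathcal{V},t}^2 + \|v\|_{\mathcal{V},t}^4$ together with the desired power of $|t|$. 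This step is routine.

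The substance of the lemma lies in the second summand, i.e. in an estimate of the shape $|p_\mathbb{V}(v) - p_{\mathbb{V}^\mathrm{nilp}}(v)|_t < C|t|^B(1+\|v\|_{\mathcal{V},t}^2)$. Here I would rewrite the prefactor via \eqref{eq:simplify_p_V} as $p_\mathbb{V}(v) = -(1+2\pi h(s_v))\Omega + i\,\partial\overline{\partial}h(s_v)$, so that the difference reduces to controlling three quantities between $\mathbb{V}$ and $\mathbb{V}^\mathrm{nilp}$: the Chern forms $\Omega$, the functions $h(s_v)$, and the forms $\partial\overline{\partial}h(s_v)$. All three are governed by the single operator $e^{zN}\psi_\mathbb{V}(t)e^{-zN}$ relating the two Hodge filtrations through \eqref{eq:map_relating_V_and_nilp_orbit}, for which \eqref{eq:dist_to_nilp_estimate} gives $|e^{zN}\psi_\mathbb{V}(t)e^{-zN}-1| = O(|t|(\log|t|)^{2k}) = O(|t|^B)$ for any $B<1$. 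Since $h(s_v)$ is built from the $(2,0)$-projection of $v$, determined by $\mathcal{F}^2$ and $\overline{\mathcal{F}^1}$, and these subspaces move by the operator above and its conjugate, I expect $|h_\mathbb{V}(s_v) - h_{\mathbb{V}^\mathrm{nilp}}(s_v)| = O(|t|^B\|v\|_{\mathcal{V},t}^2)$, and analogously for the Chern forms after applying $\partial\overline{\partial}\log$.

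The main obstacle is the term $\partial\overline{\partial}(h_\mathbb{V}(s_v) - h_{\mathbb{V}^\mathrm{nilp}}(s_v))$: bounding it requires not merely $|e^{zN}\psi_\mathbb{V}(t)e^{-zN}-1|$ but also estimates on its derivatives in the Poincar\'e coframe $\tfrac{dt}{t\log|t|}, \tfrac{d\overline{t}}{\overline{t}\log|t|}$. I would obtain these from the holomorphy of $\psi_\mathbb{V}$ near $t=0$ (with $\psi_\mathbb{V}(0)=1$) together with the fact that conjugation by $e^{zN}$ only introduces polynomial factors in $\log|t|$, and then feed them into the ``nearly bounded'' calculus of Koll\'ar already used in Lemma \ref{lemma:bound_varphi_V} (the estimates \eqref{eq:tilde_h_is_bounded}, \eqref{eq:Hodge_components_nearly_bdd}, and \eqref{eq:Kollar_bound_1}) to absorb the growth of the Hodge metric against the polynomial growth in $v$. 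The extra powers of $\log|t|$ produced at each differentiation are harmless, as they can always be traded for a slightly smaller exponent $B$. Collecting the two summands then gives the stated bound.
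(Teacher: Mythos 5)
Your decomposition, the two bounds you reduce to (the exponential-factor estimate via $|e^x-1|\leq|x|e^{|x|}$ plus Lemma \ref{lemma:dist_V_to_V_nilp}, and $|p_\mathbb{V}(v)-p_{\mathbb{V}^\mathrm{nilp}}(v)|_t=O(|t|^B(1+\|v\|^2_{\mathcal{V},t}))$ via \eqref{eq:simplify_p_V}, the Chern-form difference, and $\partial\overline{\partial}$ of the difference of Hodge norms), and the appeal to Koll\'ar's nearly-bounded calculus all match the paper's proof. The only divergence is in how the derivative control on $h_\mathcal{V}-h_{\mathcal{V}^\mathrm{nilp}}$ is sourced: you extract it from the holomorphy of $\psi_\mathbb{V}$ through \eqref{eq:map_relating_V_and_nilp_orbit}, while the paper instead records that $t^{-B}(h_\mathcal{V}(t)_{ij}-h_{\mathcal{V}^\mathrm{nilp}}(t)_{ij})$ lies in the class $B\Delta$, which is stable under the logarithmic derivative operators --- a cosmetic rather than substantive difference, since both rest on the nilpotent orbit theorem.
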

\begin{proof}
Let $p_\mathbb{V}$ be as in \eqref{eq:def_poly_p_V}. The proof of Lemma \ref{lemma:bound_varphi_V} shows that
\begin{equation} \label{eq:easy_bound_p_V}
|p_\mathbb{V}(v)|_t = O(1+\|v\|_{\mathcal{V},t}^2).
\end{equation}
By Lemma \ref{lemma:dist_V_to_V_nilp} the same upper bound holds for $|p_{\mathbb{V}^\mathrm{nilp}}(v)|_t$. Hence it suffices to establish the bounds
\begin{align}
\label{eq:exp_V_V_nilp_diff_bound}
|e^{-\pi \|v\|_{\mathcal{V},t}^2} - e^{-\pi \|v\|_{\mathcal{V}^\mathrm{nilp},t}^2}| &< C|t|^B e^{-\pi (1-A|t|^B) \|v\|_{\mathcal{V},t}^2}\|v\|_{\mathcal{V},t}^2 \\
|p_{\mathbb{V}}(v)-p_{\mathbb{V}^\mathrm{nilp}}(v)|_t &< C |t|^B (1+\|v\|_{\mathcal{V},t}^2).
\end{align}
The first bound is equivalent to
$$
|e^{-\pi(\|v\|^2_{\mathcal{V}^\mathrm{nilp},t} - \|v\|^2_{\mathcal{V},t})}-1| < C|t|^B e^{\pi A|t|^B \|v\|_{\mathcal{V},t}^2} \|v\|_{\mathcal{V},t}^2,
$$
which follows readily from Lemma \ref{lemma:dist_V_to_V_nilp} and the inequality $|e^x-1| \leq |x| e^{|x|}$ valid for all real $x$. As to the second bound, let us use \eqref{eq:simplify_p_V} and \eqref{eq:relation_Q_h_and_majorant} to write $p_\mathbb{V}(v)$ as
\begin{equation}
\begin{split}
p_\mathbb{V}(v) &= -(1+2\pi h(s_v))\Omega_\mathcal{L} +i\partial \overline{\partial}h(s_v) \\
&= -(1+\pi \|v\|_\mathcal{V}^2-\pi Q(v,v))\Omega_\mathcal{L} + i\partial \overline{\partial} \|v\|_\mathcal{V}^2/2
\end{split}
\end{equation}
and similarly
$$
p_{\mathbb{V}^{\mathrm{nilp}}}(v) = -(1+\pi \|v\|_{\mathcal{V}^\mathrm{nilp}}^2-\pi Q(v,v))\Omega_{\mathcal{L}^\mathrm{nilp}} + i\partial \overline{\partial} \|v\|_{\mathcal{V}^\mathrm{nilp}}^2/2.
$$
By Lemma \ref{lemma:dist_V_to_V_nilp} and the fact that $\Omega_\mathcal{L}$ and $\Omega_{\mathcal{L}^\mathrm{nilp}}$ are nearly bounded, it suffices to show that 
\begin{equation}\label{lemma:proof_bound_diff_varphi_V_V_nilp_identity_2}
|\Omega_{\mathcal{L}}-\Omega_{\mathcal{L}^\mathrm{nilp}}|_t=O(|t|^B)
\end{equation}
and 
\begin{equation}\label{lemma:proof_bound_diff_varphi_V_V_nilp_identity_1}
|\partial \overline{\partial}(\|v\|_{\mathcal{V},t}^2 - \|v\|_{\mathcal{V}^\mathrm{nilp},t}^2)|_t=O(|t|^B \|v\|^2_{\mathcal{V},t}).
\end{equation}
Both bounds follow from \eqref{eq:tilde_h_is_bounded}. Namely, let us write $\|v\|^2_{\mathcal{V},t}=a^*h_\mathcal{V}(t)a$ and $\|v\|^2_{\mathcal{V}^{\mathrm{nilp}},t}=a^*h_{\mathcal{V}^\mathrm{nilp}}(t)a$ as in \eqref{eq:Hodge_norm_matrix_form} and let $f_{ij}(t)=e_{ij}^{-1}(h_\mathcal{V}(t)_{ij}-h_{\mathcal{V}^\mathrm{nilp}}(t)_{ij})$; then
$$
\|v\|^2_{\mathcal{V},t}-\|v\|^2_{\mathcal{V}^{\mathrm{nilp}},t}=\sum_{i,j} a_i a_j e_{ij}(t) \cdot f_{ij}(t).
$$
Let us write $a^* e a = \Sigma_{i,j} a_i a_j e_{ij}$; then $\left|a^* e(t) a \right| = O(\|v\|^2_{\mathcal{V},t})$ by \eqref{eq:Kollar_bound_1}. Since the forms $e_{ij}^{-1} \partial e_{ij}$, $e_{ij}^{-1} \overline{\partial} e_{ij}$ and $e_{ij}^{-1} \partial \overline{\partial} e_{ij}$ are nearly bounded, the expressions $|\partial(a^* e a)|_t$, $|\overline{\partial}(a^* e a)|_t$ and $ |\partial \overline{\partial}(a^* e a)|_t$ are all $O(\|v\|^2_{\mathcal{V},t})$. Hence to establish the bound \eqref{lemma:proof_bound_diff_varphi_V_V_nilp_identity_1} it suffices to prove that the expressions $|f_{ij}(t)|$, $|\partial f_{ij}|_t$, $|\overline{\partial}f_{ij}|_t$ and $|\partial \overline{\partial}f_{ij}|_t$ are all $O(|t|^B)$ for some positive constant $B$.
% $$
% |\partial(a^* e a)|_t, \quad |\overline{\partial}(a^* e a)|_t, \quad |\partial \overline{\partial}(a^* e a)|_t = O(\|v\|^2_{\mathcal{V},t}).
% $$
Now Lemma \ref{lemma:dist_V_to_V_nilp} gives the bound
\begin{equation} \label{eq:h_V_h_V_nilp_bound}
t^{-B}(h_\mathcal{V}(t)_{ij}-h_{\mathcal{V}^\mathrm{nilp}}(t)_{ij})) \in B\Delta
\end{equation}
for some $B>0$, and so the required bounds on $f_{ij}$ and its derivatives follow from the fact that $B\Delta$ is closed under the operators $t\log|t|\tfrac{d}{dt}$ and $\overline{t}\log|t|\tfrac{d}{d\overline{t}}$.

It remains to prove the bound \eqref{lemma:proof_bound_diff_varphi_V_V_nilp_identity_2}. To see this, pick a non-zero element $v \in W_1$; then $Q(v,v)=0$ and hence $\|v\|^2_\mathcal{V}=2h_\mathcal{V}(s_v)$ and $\|v\|^2_{\mathcal{V}^{\mathrm{nilp}}}=2 h_{\mathcal{V}^{\mathrm{nilp}}}(s_v)$ by \eqref{eq:relation_Q_h_and_majorant}. We can then write $-2\pi i \Omega_{\mathcal{L}} = \partial \overline{\partial} \log \|v\|^2_{\mathcal{V}}$  and similarly $-2\pi i \Omega_{\mathcal{L}^{\mathrm{nilp}}} = \partial \overline{\partial} \log \|v\|^2_{\mathcal{V}^{\mathrm{nilp}}}$ and so to prove \eqref{lemma:proof_bound_diff_varphi_V_V_nilp_identity_2} it suffices to establish that
$$
\left| \frac{\partial \overline{\partial} \|v\|^2_{\mathcal{V}}}{\|v\|^2_{\mathcal{V}}} - \frac{\partial \overline{\partial} \|v\|^2_{\mathcal{V}^{\mathrm{nilp}}}}{\|v\|^2_{\mathcal{V}^{\mathrm{nilp}}}} \right|_t \text{ and } \left| \frac{\partial \|v\|^2_{\mathcal{V}}}{\|v\|^2_{\mathcal{V}}} - \frac{\partial  \|v\|^2_{\mathcal{V}^{\mathrm{nilp}}}}{\|v\|^2_{\mathcal{V}^{\mathrm{nilp}}}} \right|_t
$$
are of the form $O(|t|^B)$ for some $B>0$. This follows the Hodge norm estimates $\|v\|^2_{\mathcal{V}}$, $\|v\|^2_{\mathcal{V}^{\mathrm{nilp}}} \sim (-\log |t|)^{k-2}$ (for $v \in W_k-W_{k-1}$) together with the bounds provided by Lemma \ref{lemma:dist_V_to_V_nilp} and \eqref{lemma:proof_bound_diff_varphi_V_V_nilp_identity_1}.
\end{proof}

\begin{proof}[Proof of Proposition \ref{prop:theta'_difference_bound}] 

Lemma \ref{lemma:bound_diff_varphi_V_V_nilp} implies that for $|t|$ small enough we have
$$
|\varphi_\mathbb{V}(v)-\varphi_{\mathbb{V}^{\mathrm{nilp}}}(v)|_t < C |t|^B e^{-\pi \|v\|_{\mathcal{V},t}^2/2}
$$
for all $v \in V_\mathbb{R}$. Then
$$
|\Theta_\mathbb{V}(\tau)_\mu' - \Theta_{\mathbb{V}^\mathrm{nilp}}(\tau)_\mu'|_t < C |t|^B \sum_{v \in \mathcal{V}_\mathbb{Z}^\vee \cap W_2} e^{-\pi y \|v\|^2_{\mathcal{V},t}/2}
$$
With the notation of the proof of Proposition \ref{prop:theta_no_w_2_bound_1} we have
\begin{equation} \label{proof:bound_Theta'_V_to_V_nilp}
\begin{split}
\sum_{v \in \mathcal{V}_\mathbb{Z}^\vee \cap W_2} e^{-\pi y \|v\|^2_{\mathcal{V},t}/2} &< \sum_{v \in Y_2^\mathbb{Z}} e^{-\pi c y |v|^2} \\
& \ \times \sum_{w \in Y_l^\mathbb{Z}} e^{-\pi c y |w|^2 (-\log |t|)^{l-2}}
\end{split}
\end{equation}
and as argued in that proof the last expression is $O(-\log |t|)$. This gives 
$$
|\Theta_\mathbb{V}(\tau)'_\mu-\Theta_{\mathbb{V}^\mathrm{nilp}}(\tau)'_\mu|_t =O(-|t|^B \log |t|)
$$
and a similar upper bound for $|\Theta_{\mathbb{V}}^\circ(y)'_{m,\mu}-\Theta^\circ_{\mathbb{V}^\mathrm{nilp}}(y)'_{m,\mu}|_t$. The identity in the statement follows from dominated convergence.
\end{proof}

It will convenient to pass from an arbitrary nilpotent orbit $\mathbb{V}^\mathrm{nilp}$ to a special type of nilpotent orbit that we denote by $\tilde{\mathbb{V}}^{\mathrm{nilp}}$. The special feature of $\tilde{\mathbb{V}}^{\mathrm{nilp}}$ is that the corresponding limiting mixed Hodge structure splits over $\mathbb{R}$; one might refer to $\tilde{\mathbb{V}}^{\mathrm{nilp}}$ as an ``$\mathbb{R}$--split nilpotent orbit''.

To a nilpotent orbit $\mathbb{V}^\mathrm{nilp}$ one can canonically attach an $\mathbb{R}$--split nilpotent orbit $\tilde{\mathbb{V}}^\mathrm{nilp}$. The Hodge filtration $\tilde{\mathcal{F}}^\bullet$ of $\tilde{\mathbb{V}}^\mathrm{nilp}$ lives in the same complex vector space as the Hodge filtration $\mathcal{F}^\bullet$ of $\mathbb{V}^\mathrm{nilp}$, and both are related by
$$
\tilde{\mathcal{F}}^\bullet = e^{i\delta} \mathcal{F}^\bullet
$$
for a certain element $\delta$ defined by Deligne (see \cite[Prop. 2.20, p. 480]{CKS}). The two orbits $\mathbb{V}^{\mathrm{nilp}}$ and $\tilde{\mathbb{V}}^{\mathrm{nilp}}$ are close in a sense made precise by Schmid's $\mathrm{SL}_2$--orbit theorem \cite[Thm 3.25]{CKS}. As a result one obtains the following bound for the difference between the forms $\varphi_{\mathbb{V}^\mathrm{nilp}}(v)$ and $\varphi_{\tilde{\mathbb{V}}^\mathrm{nilp}}(v)$.

\begin{lemma} \label{lemma:bound_Hodge_norm_nilp_to_split_nilp_orbits}
There exists a positive constant $C$ such that
\begin{equation}
\label{eq:bound_Hodge_norm_nilp_to_split_nilp_orbits}
| \|v\|_{\mathcal{V}^{\mathrm{nilp}},t}^2 - \|v\|_{\tilde{\mathcal{V}}^{\mathrm{nilp}},t}^2 | \leq C(-\log|t|)^{-1}\|v\|_{\mathcal{V}^{\mathrm{nilp}},t}^2
\end{equation}
for all $t \in U$ and all $v \in V_\mathbb{R}$.
\end{lemma}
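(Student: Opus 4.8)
The plan is to follow the proof of Lemma \ref{lemma:dist_V_to_V_nilp} verbatim in its overall shape, replacing the input of the nilpotent orbit theorem by Schmid's $\mathrm{SL}_2$--orbit theorem. Since $\mathbb{V}^{\mathrm{nilp}}$ and $\tilde{\mathbb{V}}^{\mathrm{nilp}}$ are nilpotent orbits for the same $N$ and the same lattice $V_\mathbb{Z}$, differing only through their limit Hodge filtrations via $\tilde{F}^\bullet_{\mathrm{lim}} = e^{i\delta} F^\bullet_{\mathrm{lim}}$, I would again write $\pi:\mathbb{H}\to\Delta^*$, $z\mapsto t=e^{2\pi i z}$, let $\Phi_1,\Phi_2:\mathbb{H}\to\mathbb{D}$ be the period maps of $\pi^*\mathbb{V}^{\mathrm{nilp}}$ and $\pi^*\tilde{\mathbb{V}}^{\mathrm{nilp}}$, and pick differentiable real lifts $\phi_1,\phi_2:\mathbb{H}\to G_\mathbb{R}$. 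Using the equivariance $\|gv\|_{gx}=\|v\|_x$ exactly as before, and setting $g(z)=\phi_2(z)^{-1}\phi_1(z)$ and $w=\phi_1(z)^{-1}v$, one has
$$
\|v\|^2_{\tilde{\mathcal{V}}^{\mathrm{nilp}},t}-\|v\|^2_{\mathcal{V}^{\mathrm{nilp}},t}=\langle (g(z)^*g(z)-1)w,w\rangle_{x_0},
$$
so that the claim reduces to the operator--norm estimate $|\,\|g(z)\|^2-1\,|=O(y^{-1})$, uniform for $\mathrm{Re}(z)$ in a bounded interval, where $y=\mathrm{Im}(z)=-\log|t|/2\pi$. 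The factor $y^{-1}$ is then exactly the asserted $(-\log|t|)^{-1}$.

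The operator--norm estimate is precisely what the $\mathrm{SL}_2$--orbit theorem provides. Here the two filtrations are $\Phi_1(z)=e^{zN}F^\bullet_{\mathrm{lim}}$ and $\Phi_2(z)=e^{zN}e^{i\delta}F^\bullet_{\mathrm{lim}}=\exp\!\big(i\,\mathrm{Ad}(e^{zN})\delta\big)\,\Phi_1(z)$, so the discrepancy between the orbits is controlled by the single group element $\exp(i\,\mathrm{Ad}(e^{zN})\delta)$. Since Deligne's $\delta$ lies in $\bigoplus_{p,q<0}\mathfrak{g}^{p,q}$, its components have $\mathrm{ad}\,Y$--weight $p+q\le -2$ for the grading element $Y$ of the weight filtration; after passing to real lifts and absorbing the scaling $y^{\mathrm{ad}\,Y/2}$, a component of weight $m$ contributes at order $y^{m/2}$, so the whole element deviates from $1$ by $O(y^{-1})$, the leading contribution coming from the $\mathfrak{g}^{-1,-1}$--part of $\delta$. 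This is exactly the content of \cite[Thm. 3.25]{CKS}, and it furnishes $|\,\|g(z)\|^2-1\,|=O(y^{-1})$. Note that this is genuinely weaker than the exponential bound of Lemma \ref{lemma:dist_V_to_V_nilp}: the $\mathbb{R}$--split and non--split orbits differ at polynomial rate in $y^{-1}$, not exponentially in $-\log|t|$.

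The main point requiring care is to extract the sharp rate $y^{-1}$, rather than mere $o(1)$ decay, together with uniformity in $\mathrm{Re}(z)$ across the angular sector $U$. For the uniformity I would use that a shift of $\mathrm{Re}(z)$ acts through the real unipotent $e^{\mathrm{Re}(z)\,N}\in G_\mathbb{R}$, which drops out of the Hodge--norm comparison and leaves the estimate depending only on $y$; the range $\epsilon<\arg(t)<2\pi-\epsilon$ corresponds under $\arg(t)=2\pi\,\mathrm{Re}(z)$ to $\mathrm{Re}(z)$ confined to a compact subinterval of $(0,1)$, which is the regime where \cite[Thm. 3.25]{CKS} applies with uniform constants. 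Once $|\,\|g(z)\|^2-1\,|=O(y^{-1})$ is established, passing back to $\|g(z)^*g(z)-1\|$ (using the symmetric roles of $\phi_1$ and $\phi_2$ to control $g(z)^{-1}$ as well) and then to the displayed difference of Hodge norms is identical to the concluding steps of Lemma \ref{lemma:dist_V_to_V_nilp}.
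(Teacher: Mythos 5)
Your proposal is correct and follows essentially the same route as the paper: reduce the Hodge–norm comparison to an operator–norm estimate for the real group element relating the two period maps, and extract the rate $O(y^{-1})=O((-\log|t|)^{-1})$ from \cite[Thm.~3.25]{CKS} by weighing the filtration shifts of that element against the growth $\|v\|_t^2\sim(-\log|t|)^{k-2}$ on $W_k$. The only point where the paper is slightly more explicit than your weight heuristic for $\delta$ is that the real element $\tilde g(y)=e^{-xN}g_ze^{xN}$ has a nontrivial limit $\tilde g(\infty)\neq 1$, so one must separately invoke that $\tilde g(\infty)-1$ maps $W_n$ into $W_{n-2}$ (CKS Thm.~3.25(ii),(iv)) and that the coefficients $\tilde g_k$ of the $y^{-k}$ tail map $W_n$ into $W_{n+k-1}$, each of which then gives the $O((-\log|t|)^{-1})$ contribution in the Hodge norm.
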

\begin{proof}
As in the proof of Lemma \ref{lemma:dist_V_to_V_nilp}, this bound is equivalent to a bound for operator norm of an element $g_z \in G_\mathbb{R}$ relating $\Phi_{\mathbb{V}^\mathrm{nilp}}(t)$ and $\Phi_{\tilde{\mathbb{V}}^{\mathrm{nilp}}}(t)$. The relevant bound is proved in \cite[pp.  480-481]{CKS}: in the notation of that paper, the element
$$
g_z=e^{xN}\tilde{g}(y)e^{-xN} \in G_\mathbb{R}
$$
(cf. loc. cit., eq. (3.19)) relates both filtrations, i.e. it satisfies
$$
\Phi_{\mathbb{V}^\mathrm{nilp}}(t) = g_z\Phi_{\tilde{\mathbb{V}}^{\mathrm{nilp}}}(t).
$$
The bound \eqref{eq:bound_Hodge_norm_nilp_to_split_nilp_orbits} is then equivalent to 
\begin{equation} \label{eq:SL_2_bound_1}
\sup_{v \in V_{\mathbb{R}}-0} \left|  \frac{\|g_z v\|_{\mathcal{V}^\mathrm{nilp},t}}{\|v\|_{\mathcal{V}^{\mathrm{nilp}},t}}-1 \right| \leq C (-\log |t|)^{-1}.
\end{equation}
Schmid's $\mathrm{SL}_2$--orbit theorem \cite[Thm 3.25]{CKS} shows that $\tilde{g}(y)$ admits a convergent expansion
$$
\tilde{g}(y)=\tilde{g}(\infty)(1+\tilde{g}_1 y^{-1}+\tilde{g}_2 y^{-2}+\cdots).
$$
To prove \eqref{eq:SL_2_bound_1} it suffices to establish that
$$
\sup_{v \in V_{\mathbb{R}}-0}   \frac{\|(g_z \tilde{g}(\infty)^{-1}-1) v\|_{\mathcal{V}^\mathrm{nilp},t}}{\|v\|_{\mathcal{V}^{\mathrm{nilp}},t}} \, \text{ and } \, \sup_{v \in V_{\mathbb{R}}-0}  \frac{\|(\tilde{g}(\infty)-1) v\|_{\mathcal{V}^\mathrm{nilp},t}}{\|v\|_{\mathcal{V}^{\mathrm{nilp}},t}}
$$
are $O((-\log |t|)^{-1})$. For the first expression this follows directly from the above expansion for $\tilde{g}(y)$ together with the fact that $\tilde{g}_k$ maps $W_{n,\mathbb{R}}$ to $W_{n+k-1,\mathbb{R}}$. For the second expression one uses that $\tilde{g}(\infty)-1$ maps $W_{n,\mathbb{R}}$ to $W_{n-2,\mathbb{R}}$ (cf. \cite[Thm. 3.25.(ii) and (iv)]{CKS}).
\end{proof}

\begin{lemma}
\label{lemma:bound_varphi_nilp_to_split_nilp_orbits}
There exists a positive constant $C$ such that
$$
|\varphi_{\mathbb{V}^{\mathrm{nilp}}}(v)-\varphi_{\tilde{\mathbb{V}}^{\mathrm{nilp}}}(v)|_t <  C(-\log |t|)^{-1}e^{-\frac{\pi}{2} \|v\|^2_{\mathcal{V}^\mathrm{nilp},t}}
$$
for any $v \in V_\mathbb{R}$ and any $t \in U$ with $|t|$ sufficiently small. 
\end{lemma}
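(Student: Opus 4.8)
The plan is to run the proof of Lemma \ref{lemma:bound_diff_varphi_V_V_nilp} essentially verbatim, replacing Lemma \ref{lemma:dist_V_to_V_nilp} (the nilpotent orbit theorem input) by Lemma \ref{lemma:bound_Hodge_norm_nilp_to_split_nilp_orbits} (the $\mathrm{SL}_2$--orbit input); the only structural change is that the rapidly decaying factor $|t|^B$ is everywhere replaced by $(-\log|t|)^{-1}$, reflecting that the $\mathrm{SL}_2$--orbit theorem furnishes only polynomial rather than exponential control. Writing $\varphi_{\mathbb{V}^\mathrm{nilp}}(v)=e^{-\pi\|v\|^2_{\mathcal{V}^\mathrm{nilp}}}p_{\mathbb{V}^\mathrm{nilp}}(v)$ and likewise for $\tilde{\mathbb{V}}^\mathrm{nilp}$, with $p$ as in \eqref{eq:def_poly_p_V}, and recalling from the proof of Lemma \ref{lemma:bound_varphi_V} that $|p_{\mathbb{V}^\mathrm{nilp}}(v)|_t$ and $|p_{\tilde{\mathbb{V}}^\mathrm{nilp}}(v)|_t$ are $O(1+\|v\|^2_{\mathcal{V}^\mathrm{nilp}})$, it suffices to establish the two bounds
\[
|e^{-\pi\|v\|^2_{\mathcal{V}^\mathrm{nilp}}}-e^{-\pi\|v\|^2_{\tilde{\mathcal{V}}^\mathrm{nilp}}}|<C(-\log|t|)^{-1}e^{-\pi(1-C(-\log|t|)^{-1})\|v\|^2_{\mathcal{V}^\mathrm{nilp}}}\|v\|^2_{\mathcal{V}^\mathrm{nilp}}
\]
and $|p_{\mathbb{V}^\mathrm{nilp}}(v)-p_{\tilde{\mathbb{V}}^\mathrm{nilp}}(v)|_t<C(-\log|t|)^{-1}(1+\|v\|^2_{\mathcal{V}^\mathrm{nilp}})$.

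First I would treat the exponential factor. By Lemma \ref{lemma:bound_Hodge_norm_nilp_to_split_nilp_orbits} the two Hodge norms differ by at most $C(-\log|t|)^{-1}\|v\|^2_{\mathcal{V}^\mathrm{nilp}}$, so the elementary inequality $|e^x-1|\le |x|e^{|x|}$ gives the first bound immediately. Once $|t|$ is small enough that $C(-\log|t|)^{-1}<\tfrac12$, the exponent is $\le -\tfrac{\pi}{2}\|v\|^2_{\mathcal{V}^\mathrm{nilp}}$; combined with the $O(1+\|v\|^2)$ control on $p$, the polynomial prefactors $\|v\|^2$ are absorbed into a marginally larger Gaussian, producing the stated form $C(-\log|t|)^{-1}e^{-\frac{\pi}{2}\|v\|^2_{\mathcal{V}^\mathrm{nilp}}}$.

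The substantive part is the bound on $p_{\mathbb{V}^\mathrm{nilp}}(v)-p_{\tilde{\mathbb{V}}^\mathrm{nilp}}(v)$. As in Lemma \ref{lemma:bound_diff_varphi_V_V_nilp} I would rewrite each $p$ via \eqref{eq:simplify_p_V} and \eqref{eq:relation_Q_h_and_majorant}, reducing the problem to the two estimates $|\Omega_{\mathcal{L}^\mathrm{nilp}}-\Omega_{\tilde{\mathcal{L}}^\mathrm{nilp}}|_t=O((-\log|t|)^{-1})$ and $|\partial\overline\partial(\|v\|^2_{\mathcal{V}^\mathrm{nilp}}-\|v\|^2_{\tilde{\mathcal{V}}^\mathrm{nilp}})|_t=O((-\log|t|)^{-1}\|v\|^2_{\mathcal{V}^\mathrm{nilp}})$; the $\Omega$--estimate is then deduced exactly as in that lemma by taking $v\in W_1$ with $Q(v,v)=0$ and writing $\Omega$ in terms of $\partial\overline\partial\log\|v\|^2$. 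Both reduce to the analogue of \eqref{eq:h_V_h_V_nilp_bound}: writing $h_{\mathcal{V}^\mathrm{nilp}}$, $h_{\tilde{\mathcal{V}}^\mathrm{nilp}}$ for the Hodge metric matrices in the weight-adapted basis, one must show that the normalized differences $(-\log|t|)\,e_{ij}^{-1}(h_{\mathcal{V}^\mathrm{nilp}}(t)_{ij}-h_{\tilde{\mathcal{V}}^\mathrm{nilp}}(t)_{ij})$ lie in $B\Delta$, after which the derivative bounds follow since $B\Delta$ is closed under $t\log|t|\tfrac{d}{dt}$ and $\overline t\log|t|\tfrac{d}{d\overline t}$.

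This last point is where Schmid's $\mathrm{SL}_2$--orbit theorem enters, and it is the main obstacle. In the notation of Lemma \ref{lemma:bound_Hodge_norm_nilp_to_split_nilp_orbits} the two metrics are related by $g_z=e^{xN}\tilde g(y)e^{-xN}$ with $y=-\tfrac{1}{2\pi}\log|t|$, and I would expand $\tilde g(y)=\tilde g(\infty)(1+\tilde g_1 y^{-1}+\cdots)$: the tail $\tilde g_1 y^{-1}+\cdots$ contributes terms of order $y^{-1}$ directly, while the weight-lowering operator $\tilde g(\infty)-1$ (which sends $W_{n,\mathbb{R}}$ into $W_{n-2,\mathbb{R}}$), although of order one, contributes to each normalized entry an extra factor $(-\log|t|)^{-1}$ through the Hodge norm estimate $\|\cdot\|^2\sim(-\log|t|)^{k-2}$, exactly as in the proof of Lemma \ref{lemma:bound_Hodge_norm_nilp_to_split_nilp_orbits}; on the angular sector $U$ the conjugating factors $e^{\pm xN}$ are bounded and cause no harm. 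The delicate verification is that this $y^{-1}$--decay survives differentiation by the Poincaré-adapted operators: this holds because those operators act on the relevant functions of $y$ by $t\log|t|\tfrac{d}{dt}f(y)=\tfrac{y}{2}f'(y)$, which sends each power $y^{-k}$ to a constant multiple of itself and hence preserves the order in $(-\log|t|)^{-1}$. Granting this, the normalized metric difference and all its first and second Poincaré derivatives are $O((-\log|t|)^{-1})$, which yields the two reduced estimates and completes the proof.
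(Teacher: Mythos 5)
Your proposal is correct and follows essentially the same route as the paper's proof, which likewise proceeds by repeating the argument of Lemma \ref{lemma:bound_diff_varphi_V_V_nilp} with Lemma \ref{lemma:dist_V_to_V_nilp} replaced by the $\mathrm{SL}_2$--orbit estimate \eqref{eq:bound_Hodge_norm_nilp_to_split_nilp_orbits}, the key point being that $(-\log|t|)^{-1}B\Delta$ is stable under the operators $t\log|t|\tfrac{d}{dt}$ and $\overline{t}\log|t|\tfrac{d}{d\overline{t}}$. Your discussion of how the expansion of $\tilde{g}(y)$ and the weight--lowering property of $\tilde{g}(\infty)-1$ produce the $(-\log|t|)^{-1}$ gain, and of why this decay survives the Poincar\'e--adapted derivatives, fills in details the paper leaves implicit but matches its intended argument.
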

\begin{proof}

The proof follows closely that of Lemma \ref{lemma:bound_diff_varphi_V_V_nilp}, replacing the use of Lemma \ref{lemma:dist_V_to_V_nilp} by \eqref{eq:bound_Hodge_norm_nilp_to_split_nilp_orbits}. The needed bounds
$$
|\Omega_{\mathcal{L}^{\mathrm{nilp}}}-\Omega_{\tilde{\mathcal{L}}^{\mathrm{nilp}}}|_t = O((-\log|t|)^{-1})
$$
and
$$
|\partial \overline{\partial}(\|v\|^2_{\mathcal{V}^{\mathrm{nilp}},t}-\|v\|^2_{\tilde{\mathcal{V}}^{\mathrm{nilp}},t})|_t = O((-\log |t|)^{-1} \|v\|^2_{\mathcal{V}^{\mathrm{nilp}},t} )
$$
follow in the same way as \eqref{lemma:proof_bound_diff_varphi_V_V_nilp_identity_2} and \eqref{lemma:proof_bound_diff_varphi_V_V_nilp_identity_1} using the fact that the subspace
$$
(-\log |t|)^{-1} B \Delta \subset B\Delta
$$
is stable under the operators $t \log|t| \tfrac{d}{dt}$ and $\overline{t} \log|t| \tfrac{d}{d\overline{t}}$.
\end{proof}

Combined with the estimate \eqref{eq:trivial_estimate_Theta'}, the lemma implies the bound
\begin{equation}
\begin{split}
|\Theta_{\mathbb{V}^\mathrm{nilp}}(\tau)'_\mu - \Theta_{\tilde{\mathbb{V}}^\mathrm{nilp}}(\tau)'_\mu|_t & \leq \sum_{v \in \mathcal{V}^\vee_\mathbb{Z} \cap W_2} |\varphi_{\mathbb{V}^{\mathrm{nilp}}}(v)-\varphi_{\tilde{\mathbb{V}}^{\mathrm{nilp}}}(v)|_t \\
&\leq C(-\log|t|)^{-1} \left( \sum_{v \in \mathcal{V}^\vee_\mathbb{Z} \cap W_2} e^{-\tfrac{\pi}{2}\|v\|^2_{\mathcal{V}^\mathrm{nilp},t}} \right) \\
&= O(1),
\end{split}    
\end{equation}
and similarly that $|\Theta_{\mathbb{V}^\mathrm{nilp}}^\circ(y)'_{m,\mu} - \Theta_{\tilde{\mathbb{V}}^\mathrm{nilp}}^\circ(y)'_{m,\mu}|_t$ is also bounded for all $m$ and $\mu$. 

It follows that to prove Theorem \ref{thm:convergence_main} it suffices to show that $\Theta_{\tilde{\mathbb{V}}^\mathrm{nilp}}(\tau)'_\mu$ and $\Theta_{\tilde{\mathbb{V}}^\mathrm{nilp}}^\circ(y)'_{m,\mu}$ are integrable over $\Delta^*$ for all $m$ and $\mu$ and satisfy
\begin{equation}\label{eq:main_convergence_split_nilp}
\int_{\Delta^*} \Theta_{\tilde{\mathbb{V}}^\mathrm{nilp}}(\tau)'_\mu = \sum_{m} \left( \int_{\Delta^*} \Theta_{\tilde{\mathbb{V}}^\mathrm{nilp}}^\circ(y)'_{m,\mu} \right) \cdot q^m.
\end{equation}

We will prove \eqref{eq:main_convergence_split_nilp} in the next two sections by distinguishing the nilpotent orbits of types II and III, using the explicit nature of $\varphi_{\tilde{\mathbb{V}}^\mathrm{nilp}}(v)$ in each case.

\subsection{Integrability for type II nilpotent orbits} \label{subsection:integrability_type_ii}

\subsubsection{} Let us first determine explicitly the form $\varphi_\mathbb{V}(v)$ corresponding to a type II nilpotent orbit $\mathbb{V}=\tilde{\mathbb{V}}^{\mathrm{nilp}}$. The setting is that of \autoref{subsection:type_ii_degenerations}; in particular, we assume that the associated limiting mixed Hodge structure is $\mathbb{R}$--split.

For a vector $v \in W_{2,\mathbb{R}}$, we can write $v=v_2+ae^{1,0}+\overline{a}e^{0,1}$ with $v_2 \in V_2$, and by \eqref{eq:Hodge_norm_explicit_2_type_II} we have
\begin{equation} \label{eq:phi_type_I_explicit_0}
\varphi_\mathbb{V}(v)=e^{-\pi Q(v_2,v_2)} \varphi_\mathbb{V}(a e^{1,0}+\overline{a} e^{0,1})
\end{equation} 
and
\begin{equation} \label{eq:type_I_explicit_metric}
\|ae^{1,0}+\overline{a} e^{0,1}\|_z^2=2|a|^2/\mathrm{Im}(z).
\end{equation}
For $v \in W_1$, we have 
$$
Q(v,v)=0 \Rightarrow \|v_z^{1,1}\|_z^2=2\|v_z^{2,0}\|_z^2
$$ 
and hence
$$
h(s_v)=2\|v^{2,0}\|_z^2 = (\|v^{1,1}\|_z^2+2\|v^{2,0}\|_z^2)/2=\|v\|_z^2/2.
$$
For $v \in W_1$, we conclude using \eqref{eq:Chern_form_type_II} that
\begin{align*}
\theta &= \frac{\partial h(s_v)}{h(s_v)} = \frac{\partial \|v\|_z^2}{\|v\|_z^2} = -\frac{\partial 
\mathrm{Im}(z)}{\mathrm{Im}(z)} = \frac{i dz}{2\mathrm{Im}(z)} \\
%\Omega &= \frac{\partial \overline{\partial} h(s_v)}{h(s_v)} + \theta \wedge \overline{\theta} \\
\theta \wedge \overline{\theta} &= \frac{dz \wedge d\overline{z}}{4\mathrm{Im}(z)^2}=-2\pi i \Omega
\end{align*}
and hence
\begin{equation} \label{eq:phi_type_I_explicit_1}
\begin{split}
\varphi_\mathbb{V}(v)&=e^{-\pi \|v\|_z^2}(-\Omega + ih(s_v) \theta \wedge \overline \theta) \\
%&= e^{-\pi \|v\|_z^2}(-\frac{1}{2\pi}+ h(s_v)) i\theta \wedge \overline \theta \\
&= e^{-\pi \|v\|_z^2}(\pi \|v\|_z^2 -1)\Omega
%&=e^{-(a^2+b^2)h_z^{-1}}(-1 + \frac{a^2+b^2}{4h_z})\frac{i}{2\pi} \frac{dz \wedge d\overline{z}}{h_z^2} \\
\end{split}
\end{equation}
For $v=ae^{1,0} + \overline{a}e^{0,1}$, by \eqref{eq:type_I_explicit_metric} we obtain
\begin{equation} \label{eq:phi_type_I_explicit_2}
\varphi_\mathbb{V}(ae^{1,0}+\overline{a}e^{0,1}) = \phi\left(\frac{a}{(\mathrm{Im}(z)/2)^{1/2}}\right)\Omega,
\end{equation}
where $\phi:\mathbb{C} \to \mathbb{R}$ is the Schwartz form defined by
\begin{equation} \label{eq:phi_type_II_explicit_3}
\phi(a) = e^{-\pi |a|^2}(\pi |a|^2-1).
\end{equation}
Let us write $\mathcal{F}\phi$ for the Fourier transform of $\phi$. In order to estimate $\Theta_{\mathbb{V}}(\tau)'_\mu$ we will need to compute $\mathcal{F}\phi(0)$. Using polar coordinates we find
\begin{equation} \label{eq:Four_transf_const_term_type_II}
\begin{split}
\mathcal{F}\phi(0) &= \int_{\mathbb{C}} \phi(a)da \\ &= \int_0^\infty \int_0^{2\pi} \phi(r \cos \theta, r \sin\theta) r dr d\theta \\
&= 2\pi \int_0^\infty e^{-\pi r^2 } (\pi r^2  -1) r dr \\
&= 2\pi \left(-e^{-\pi r^{2}} \frac{r^2}{2} \right|_{0}^\infty = 0.
\end{split}
\end{equation}

\subsubsection{} Using the above description of $\varphi_{\mathbb{V}}$ we can compute $\Theta_{\mathbb{V}}(\tau)'$ explicitly. We write $W_j^{\mathbb{Z}} = V_\mathbb{Z} \cap W_j$ and obtain a filtration
$$
0 = W_0^\mathbb{Z} \subset W_1^\mathbb{Z} \subset W_2^{\mathbb{Z}} \subset W_3^{\mathbb{Z}}=V_\mathbb{Z}
$$
of the local system $V_\mathbb{Z}$. The associated quotients
$$
\mathrm{Gr}^W_j V_\mathbb{Z} := W_{j}^{\mathbb{Z}}/W_{j-1}^{\mathbb{Z}}, \quad j=1,2,3,
$$
are local systems of free abelian groups of ranks $2$, $n-2$ and $2$ respectively. Note that if $\mu \notin W_2+V_\mathbb{Z}$, then $(\mu + V_\mathbb{Z})\cap W_2=\emptyset$ and $\Theta_\mathbb{V}(\tau)'_\mu = 0$.

Recall the Deligne splitting introduced in  \eqref{subsection:type_ii_degenerations}: we have
$$
W_1 \otimes \mathbb{C} = I^{1,0} \oplus I^{0,1}, \quad W_2 \otimes \mathbb{C} = I^{1,0} \oplus I^{0,1} \oplus I^{1,1}.
$$
Since $I^{1,1}$ is stable under complex conjugation, this induces a splitting of the filtration $W_1 \otimes \mathbb{R} \subset W_2 \otimes \mathbb{R}$:
$$
W_2 \otimes \mathbb{R} = W_1 \otimes \mathbb{R} \oplus (I^{1,1} \cap V_\mathbb{R}).
$$
We denote by $\pi_1: W_2 \otimes \mathbb{R} \to W_1 \otimes \mathbb{R}$ and $\pi_2: W_2 \otimes \mathbb{R} \to (I^{1,1} \cap V_{\mathbb{R}})$ the resulting projections. By \eqref{eq:phi_type_I_explicit_0}, for $v \in W_2^\mathbb{Z}$ we have
\begin{equation} 
\varphi_{\mathbb{V}}(v) = e^{-\pi Q(\pi_2(v),\pi_2(v))} \varphi_\mathbb{V}(\pi_1(v)).
\end{equation}
Since $Q(W_1,W_2)=0$, we can rewrite this as
\begin{equation}\label{eq:V_split_phi_identity_1}
\varphi_{\mathbb{V}}(v) = e^{-\pi Q(v,v)} \varphi_\mathbb{V}(\pi_1(v)).
\end{equation}
For the theta series $\Theta_{\mathbb{V}}(\tau)'_\mu$ with $\mu \in W_2 + V_\mathbb{Z}$ we have $(\mu+V_\mathbb{Z}) \cap W_2=\mu+W_2^\mathbb{Z}$ and hence
\begin{equation}
\begin{split}
\Theta_{\mathbb{V}}(\tau)'_\mu &= \sum_{v \in \mu
+ W_2^\mathbb{Z}} \varphi_{\mathbb{V}}(y^{1/2} v) e^{\pi i Q(v,v)x} \\
&= \sum_{v \in (\mu + W_2^\mathbb{Z})/W_1^\mathbb{Z}} q^{Q(v,v)/2} \sum_{v_1 \in W_1^\mathbb{Z}} \varphi_{\mathbb{V}}(y^{1/2}(v_1 + \pi_1(v))).
\end{split}
\end{equation}
We will now use Poisson summation to give an upper bound for the inner sum. Let us fix flat sections $\lambda_1$, $\lambda_{2}$ of $W_1^\mathbb{Z}$ giving a trivialization
\begin{equation} \label{eq:trivialization_W_1_type_II}
\Phi: \underline{\mathbb{Z}}^{2} \overset{\sim}{\to} W_1^\mathbb{Z}, \quad \Phi((a_1,a_2))=a_1 \lambda_1 + a_2 \lambda_2.
\end{equation}
Write
\begin{equation}
\begin{split}
\lambda_i &= \alpha_i e^{1,0} + \overline{\alpha_i} e^{0,1}, \quad i=1,2,
\end{split}
\end{equation}
for some complex numbers $\alpha_1$, $\alpha_2$. By \eqref{eq:Hodge_norm_explicit_2_type_II}, for a vector $a=(a_1,a_2) \in \mathbb{R}^2$, the Hodge metric of $\Phi(a)$ is given by
$$
\|\Phi(a)\|_z =  2|a_1 \alpha_1+a_2 \alpha_2|^2/ \mathrm{Im}(z),
$$
Let us define a Schwartz function $\tilde{\phi}:\mathbb{R}^2 \to \mathbb{C}$ by
$$
\tilde{\phi}(x_1,x_2) = \phi \left( x_1 \alpha_1 + x_2 \alpha_2 \right).
$$
Writing $\pi_1(v) = \Phi(a)$ for some $a \in \mathbb{R}^2$ and using \eqref{eq:phi_type_I_explicit_2} and \eqref{eq:phi_type_II_explicit_3} we have
\begin{equation}
\begin{split}
\sum_{v_1 \in W_1^\mathbb{Z}} \varphi_\mathbb{V}(y^{1/2}(v_1+\pi_1(v))) &= \left(  \sum_{n \in \mathbb{Z}^2} \tilde{\phi} \left(\frac{a+n}{\sqrt{\mathrm{Im}(z)/2y}} \right)\right)\Omega.
%(a_1+n_1)\alpha_1 + (a_2+n_2)\alpha_2)
\end{split}
\end{equation}

Writing $\mathcal{F}\tilde{\phi}$ for the Fourier transform of $\tilde{\phi}$, an application of Poisson summation gives
\begin{equation}
\sum_{n \in \mathbb{Z}^2} \tilde{\phi}\left(\frac{a+n}{\sqrt{\mathrm{Im}(z)/2y}}\right) = \frac{\mathrm{Im}(z)}{2y} \sum_{m \in \mathbb{Z}^2} e^{2\pi i m \cdot a} \mathcal{F}\tilde{\phi}( \sqrt{\mathrm{Im}(z)/2y} \cdot m ).
\end{equation}

% \begin{equation}
% \sum_{n \in \mathbb{Z}^2} \phi(h_z^{-1/2}A(a+n)) = \frac{h_z}{|\det A|} \sum_{m \in \mathbb{Z}^2} e^{2\pi i m \cdot (h_z^{-1/2}A a)} \mathcal{F}\phi(h_z {^t}A^{-1}m).
% \end{equation}
By \eqref{eq:Four_transf_const_term_type_II}, the term corresponding to $m=0$ vanishes, and we obtain the upper bound, uniform in $a$,
\begin{equation}
\left| \sum_{v_1 \in W_1^\mathbb{Z}} \varphi_\mathbb{V}(\sqrt{y}(v_1+\pi_1(v))) \right|_t \leq \frac{\mathrm{Im}(z)}{2y} \sum_{m \in \mathbb{Z}^2-0} |\mathcal{F}\tilde{\phi}(\sqrt{\mathrm{Im}(z)/2y} \cdot m)| \cdot |\Omega|_t
\end{equation}
This expression decreases rapidly as  $\mathrm{Im}(z) \to \infty$, which implies the integrability of $\Theta_{\mathbb{V}}(\tau)'_\mu$. Similarly, using $Q(W_1,W_2)=0$, we may write
\begin{equation}
\begin{split}
\Theta_{\mathbb{V}}^\circ(y)'_{m,\mu} &= \sum_{\substack{ v \in \mu + W_2^\mathbb{Z} \\ Q(v,v)=2m}} \varphi^\circ_\mathbb{V}(y^{1/2}v) \\
&= \sum_{\substack{ v \in (\mu + W_2^\mathbb{Z})/W_1^\mathbb{Z} \\ Q(v,v)=2m}} \sum_{v_1 \in W_1^\mathbb{Z}} \varphi^\circ_\mathbb{V}(y^{1/2}(v_1 + \pi_1(v))).
\end{split}
\end{equation}
Note that the outer sum is finite since $Q$ polarizes the pure Hodge structure $\mathrm{Gr}_2^W \mathbb{V}$, which is purely of type $(1,1)$. The above bound establishes that $\Theta_{\mathbb{V}}^\circ(y)'_{m,\mu}$ is integrable for all $m$ and implies the identity
$$
\int_{\Delta^*} \Theta_{\mathbb{V}}(\tau)'_\mu =\sum_{m} \left( \int_{\Delta^*} \Theta_{\mathbb{V}}^\circ(y)'_{m,\mu} \right) \cdot q^m
$$
by dominated convergence.

\subsection{Integrability for type III nilpotent orbits} \label{subsection:integrability_type_iii} We now determine the form $\varphi_\mathbb{V}(v)$ and theta series $\Theta_{\mathbb{V}}(\tau)'_\mu$ for a type III nilpotent orbit $\mathbb{V}=\tilde{\mathbb{V}}^{\mathrm{nilp}}$. We will work in the setting of \autoref{subsection:type_iii_degenerations}; in particular, we assume that the associated limiting mixed Hodge structure is $\mathbb{R}$--split.

\subsubsection{} Let $v \in W_{2,\mathbb{R}}$. As in \eqref{eq:vectors_W_2_type_iii}, we may write
$v=v_U+aNe^{2,2}+bN^2 e^{2,2}$ with $v_U \in U$ and real numbers $a$ and $b$. By \eqref{eq:Hodge_norm_explicit_1_type_III} and \eqref{eq:Hodge_norm_explicit_2_type_III}, we have
\begin{equation} \label{eq:phi_type_III_explicit_0}
\varphi_\mathbb{V}(v) = e^{-\pi Q(v_U,v_U)} \varphi_\mathbb{V}(aNe^{2,2}+bN^2e^{2,2}).
\end{equation}
Differentiating $h(s_v)=|b-az|^2/\mathrm{Im}(z)^2$ gives
$$
\theta(s_v) = \frac{\partial h(s_v)}{h(s_v)} =  \left(- \frac{a}{b-az} + \frac{i}{\mathrm{Im}(z)} \right)dz
$$
and hence
$$
\theta \wedge \overline{\theta} = \left| \frac{b-a\mathrm{Re}(z)}{(b-az)\mathrm{Im}(z)} \right|^2 dz \wedge d\overline{z}.
$$
Using \eqref{eq:Chern_form_type_III} we obtain
\begin{equation}
\begin{split}
i h(s_v) \theta \wedge \overline{\theta} &= \frac{|b-a\mathrm{Re}(z)|^2}{\mathrm{Im}(z)^2} \cdot \frac{i dz \wedge d\overline{z}}{\mathrm{Im}(z)^2} \\
&= \left(\frac{b-a\mathrm{Re}(z)}{\mathrm{Im}(z)}\right)^2 4\pi \Omega.
\end{split}
\end{equation}

For $v=aNe^{2,2}+bN^2e^{2,2}$ we obtain the explicit formula
\begin{equation} \label{eq:phi_type_III_explicit_1}
\varphi_\mathbb{V}(a Ne^{2,2}+b N^2 e^{2,2}) = e^{-\pi a^2} \phi\left(\frac{b-a\mathrm{Re}(z)}{\mathrm{Im}(z)} \right) \Omega,
\end{equation}
where $\phi:\mathbb{R} \to \mathbb{R}$ is the Schwartz function defined by
\begin{equation} \label{eq:phi_type_III_explicit_2}
\phi(b) = e^{-2\pi b^2}(4\pi b^2-1).
\end{equation}
The most important consequence of this explicit description of $\phi$ is that its Fourier transform $\mathcal{F}\phi$ satisfies 
\begin{equation}\label{eq:Four_transf_const_term_type_III}
\mathcal{F}\phi(0)=0,
\end{equation}
as follows from the identity $\tfrac{d}{db}(-b e^{-2\pi b^2}) = e^{-2\pi b^2}(4\pi b^2-1)$.

\subsubsection{} Let us now compute $\Theta_\mathbb{V}(\tau)'$ explicitly using the above description of $\varphi_\mathbb{V}$. As in \ref{subsection:integrability_type_ii}, we define $W_j^\mathbb{Z}=W_j \cap V_\mathbb{Z}$ and obtain a filtration
$$
0 \subset W_0^\mathbb{Z}=W_1^\mathbb{Z} \subset W_2^\mathbb{Z}=W_3^\mathbb{Z} \subset W_4^\mathbb{Z} = V_\mathbb{Z}
$$
whose associated quotients $\mathrm{Gr}_k^W V_\mathbb{Z}:=W_k^\mathbb{Z}/W_{k-1}^\mathbb{Z}$ are free abelian groups (of rank one in the case of $\mathrm{Gr}_0^W V_\mathbb{Z}$ and $\mathrm{Gr}_4^W V_\mathbb{Z}$). Let us define
$$
W_{2,\mathrm{prim}}^\mathbb{Z} = W_2^\mathbb{Z} \cap \ker N.
$$
Fix a generator $v_0$ of $W_0^\mathbb{Z}$ and vectors $v_1,\ldots,v_{n-1}$ of $W_{2,\mathrm{prim}}^\mathbb{Z}$ such that 
$$
W_{2,\mathrm{prim}}^\mathbb{Z} = \langle v_0, v_1,\ldots, v_{n-1} \rangle,
$$
i.e. so that $Y_{2,\mathrm{prim}}^\mathbb{Z}:=\langle v_1,\ldots,v_{n-1} \rangle$ is a complement of $W_0^\mathbb{Z}$ in $W_{2,\mathrm{prim}}^\mathbb{Z}$. We also fix a vector $v_n' \in V_\mathbb{Z}$ mapping to a generator of $\mathrm{Gr}_4^W V_\mathbb{Z}$ and a vector $v_n \in W_2^\mathbb{Z}$ such that $v_n \equiv Nv_n' \mod W_0$; then the image of $v_n$ in $\mathrm{Gr}_2^W V_\mathbb{Z}$ generates the rank one lattice
$$
N(\mathrm{Gr}_4^W V_\mathbb{Z}) = \mathrm{im}(N:\mathrm{Gr}_4^W V_\mathbb{Z} \to \mathrm{Gr}_2^W V_\mathbb{Z})
$$
(cf. Lemma \ref{lemma:integrality_N_type_iii}.(ii)). Define
$$
Y_2^\mathbb{Z} := \langle v_1,\ldots, v_n \rangle = Y_{2,\mathrm{prim}}^\mathbb{Z} \oplus \langle v_n \rangle
$$
(orthogonal sum). Then $W_0^\mathbb{Z} + Y_2^\mathbb{Z}$ is a (finite index) sublattice of $W_2^\mathbb{Z}$ and the quotient map $W_2 \to \mathrm{Gr}_2^W V$ induces an isometry
\begin{equation} \label{eq:graded_isometry_type_iii}
Y_2^\mathbb{Z} \simeq \mathrm{Gr}_{2,\mathrm{prim}}^W V_\mathbb{Z} \oplus N(\mathrm{Gr}_4^W V_\mathbb{Z})
\end{equation}
onto a (finite-index) sublattice of $\mathrm{Gr}_2^W V_\mathbb{Z}$. With the notation of \ref{subsubsection:Theta_MHS_type_III} we may then write $\mu=\mu_2+\mu_0$ with $\mu_2 \in Y_2^\mathbb{Z} \otimes \mathbb{Q}$ and $\mu_0 \in W_0$ and
\begin{equation} \label{eq:Theta'_decomposition_type_III_1}
\Theta_\mathbb{V}(\tau)'_\mu =  \sum_{\substack{N\lambda+\nu  \equiv \mu_2 \\ \mod W_2^\mathbb{Z}}} \Theta_{\mathbb{V}}(\tau)'_{\lambda \otimes \nu}
\end{equation}
with
\begin{equation} \label{eq:Theta'_decomposition_type_III_2}
\Theta_{\mathbb{V}}(\tau)'_{\lambda \otimes \nu} := \sum_{\substack{v' \in N\lambda + \langle v_n \rangle \\ v \in \nu + Y_{2,\mathrm{prim}}^\mathbb{Z} \\ w \in \mu_0 + W_0^\mathbb{Z}} } \varphi_\mathbb{V}(y^{1/2}(v'+v+w))e^{\pi i x (Q(v',v')+Q(v,v))}.
\end{equation}
To estimate this sum, we use \eqref{eq:vectors_W_2_type_iii} to write
\begin{equation}
    \begin{split}
        v' &= a(v')Ne^{2,2} + b(v')N^2e^{2,2} \\
        v &= \pi_U(v) + b(v)N^2 e^{2,2} \\
        w &= b(w) N^2e^{2,2},
    \end{split}
\end{equation}
where $\pi_U:W_{2,\mathbb{R}} \to U$ is the projection to $U$ and $a, b$ are linear functionals on $W_{2,\mathbb{R}}$. Since $W_0$ is anisotropic and $Q(W_0,W_2)=0$, we have $Q(\pi_U(v),\pi_U(v))=Q(v,v)$ and $Q(v',v')=-a(v')^2$ and hence, by \eqref{eq:phi_type_III_explicit_0} and \eqref{eq:phi_type_III_explicit_1}, 
\begin{equation}
    \begin{split}
\varphi_{\mathbb{V}}(v'+v+w) &= e^{-\pi Q(v,v)}  \\
& \quad \times \varphi_\mathbb{V}(a(v')Ne^{2,2}+b(v+v')N^2e^{2,2}+w). \\ 
&= e^{-\pi (Q(v,v)-Q(v',v'))} \\
& \quad \times \phi\left(\frac{b(v+v'+w)-a(v')\mathrm{Re}(z)}{\mathrm{Im}(z)}\right)\Omega,
    \end{split}
\end{equation}
with $\phi$ given by \eqref{eq:phi_type_III_explicit_2}.

For the theta series $\Theta_\mathbb{V}(\tau)'_{\lambda \otimes \nu}$ this gives
\begin{equation} \label{eq:theta_nilp_'_type_III_explicit_1}
\begin{split}
\Theta_{\mathbb{V}}(\tau)'_{\lambda \otimes \nu} &= \sum_{\substack{v' \in N\lambda+\langle v_n \rangle \\ v \in \nu + Y_{2,\mathrm{prim}}^\mathbb{Z}}} q^{Q(v,v)/2} \overline{q}^{(-Q(v',v')/2)} \\
& \qquad \times \sum_{w \in \mu_0 +  W_0^\mathbb{Z}} \phi\left(\sqrt{y}\frac{b(v+v'+w)-a(v')\mathrm{Re}(z)}{\mathrm{Im}(z)}\right) \Omega.
\end{split}
\end{equation}
To estimate the sum over $\mu_0 + W_0^\mathbb{Z}=\mathbb{Z}v_0$ we apply Poisson summation: writing $\mathcal{F}\phi$ for the Fourier transform of $\phi$ and 
$$
A = \frac{b(v+v' + \mu_0)-a(v')\mathrm{Re}(z)}{\mathrm{Im}(z)/\sqrt{y}},
$$ 
we have
$$
\sum_{n \in \mathbb{Z}} \phi\left(A+\frac{n b(v_0)}{\mathrm{Im}(z)/\sqrt{y}} \right) = \frac{\mathrm{Im}(z)}{|b(v_0)|\sqrt{y}} \sum_{m \in \mathbb{Z}} e^{2\pi i mA} \mathcal{F}\phi\left(\frac{\mathrm{Im}(z)}{b(v_0)\sqrt{y}} m \right).
$$
The term corresponding to $m=0$ vanishes by \eqref{eq:Four_transf_const_term_type_III}; this gives the upper bound, uniform in $A$,

\begin{equation} \label{eq:bound_phi_type_III}
    \begin{split}
& \left| \sum_{w \in W_0^\mathbb{Z}} \phi  \left(\sqrt{y}\frac{b(v+v'+w)-a(v')\mathrm{Re}(z)}{\mathrm{Im}(z)}\right) \right|_t \\
 & \qquad \qquad \leq \frac{\mathrm{Im}(z)}{|b(v_0)|\sqrt{y}} \sum_{m \in \mathbb{Z}-0} \left| \mathcal{F}\phi\left(\frac{\mathrm{Im}(z)}{b(v_0)\sqrt{y}} m \right)\right| |\Omega|_t
    \end{split}
\end{equation}
The right hand side is rapidly decreasing as $\mathrm{Im}(z) \to \infty$. This implies the integrability of $\Theta_\mathbb{V}(\tau)'_\mu$. Similarly, using $Q(W_0,W_2)=0$, we have
$
\Theta^\circ_{\mathbb{V}}(y)'_{m,\mu} = \sum_{\substack{N\lambda+\nu  \equiv \mu_2 \\ \mod W_2^\mathbb{Z}}} \Theta^\circ_{\mathbb{V}}(y)'_{m,\lambda \otimes \nu}
$
with
\begin{equation} \label{eq:Theta'_decomposition_type_III_3}
\begin{split}
\Theta^\circ_{\mathbb{V}}(y)'_{m,\lambda \otimes \nu} \cdot q^m = & \sum_{\substack{v' \in N\lambda+\langle v_n \rangle \\ v \in \nu + Y_{2,\mathrm{prim}}^\mathbb{Z} \\ Q(v+v',v+v')=2m}} q^{Q(v,v)/2} \overline{q}^{(-Q(v',v')/2)} \\
& \qquad \times \sum_{w \in \mu_0 + W_0^\mathbb{Z}} \phi\left(\sqrt{y}\frac{b(v+v'+w)-a(v')\mathrm{Re}(z)}{\mathrm{Im}(z)}\right) \Omega.
\end{split}
\end{equation}
The bound \eqref{eq:bound_phi_type_III} and the fact that $Q$ is positive definite on $Y_{2,\mathrm{prim}}^\mathbb{Z}$ and negative definite on $N(\mathrm{Gr}_4^W V_\mathbb{Z})$ show that $\Theta_\mathbb{V}^\circ(y)'_{m,\lambda \otimes \nu}$ is integrable for all $m$. The identity
$$
\int_{\Delta^*} \Theta_{\mathbb{V}}(\tau)'_\mu =\sum_{m} \left( \int_{\Delta^*} \Theta_{\mathbb{V}}^\circ(y)'_{m,\mu} \right) \cdot q^m
$$
follows by dominated convergence.

\section{Generating series of Noether--Lefschetz numbers}

The goal of this section is to determine the Fourier expansion of the non--holomorphic modular forms $Z_\mathbb{V}(\tau)_\mu$ in Theorem \ref{thm:convergence_main}. We will see that their Fourier coefficients can be expressed in terms of the degrees of the Noether--Lefschetz loci $\mathrm{NL}_\mathbb{V}(m)_\mu$ defined below and some discrete invariants of the limiting mixed Hodge structures arising from the degeneration of $\mathbb{V}$ around each point $P$ in $\overline{S}-S$.

More precisely, to $\mathbb{V}$ one can attach the $q$-series
$$
Z_\mathbb{V}^+(\tau)_\mu := -\deg(\overline{\mathcal{L}}) \delta_{\mu,0} + \sum_{m>0} \mathrm{deg} \  \mathrm{NL}_\mathbb{V}(m)_\mu \cdot q^m, \quad q = e^{2\pi i \tau}
$$
as well as theta series $Z_{\mathbb{V},P}^-(\tau)$ for each $P \in \overline{S} \, \backslash \, S$ (see \eqref{eq:def_theta_MHS_type_II_2} and \eqref{eq:def_theta_MHS_type_III_2}). We will prove the following theorem which implies Theorem \ref{thm:main_thm_intro}.

\begin{theorem} \label{thm:Z_Four_expansion}
Assume that $\mathbb{V}$ satisfies \ref{hypothesis:hyp_on_V}. For all $\tau \in \mathbb{H}$ and $\mu \in \mathcal{V}_\mathbb{Z}^\vee/\mathcal{V}_\mathbb{Z}$,
$$
Z_\mathbb{V}(\tau)_\mu = Z_\mathbb{V}^+(\tau)_\mu + \sum_{P \in \overline{S}\backslash S} Z^-_{\mathbb{V},P}(\tau)_\mu.
$$
\end{theorem}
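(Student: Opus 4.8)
The plan is to match the two sides coefficient by coefficient in the $q$-expansion. By Theorem~\ref{thm:convergence_main} the coefficient of $q^m$ in $Z_\mathbb{V}(\tau)_\mu$ is $\int_S\Theta^\circ_\mathbb{V}(y)_{m,\mu}$, so it suffices to prove, for each $m\in\tfrac12 Q(\mu,\mu)+\mathbb{Z}$, that
\[
\int_S\Theta^\circ_\mathbb{V}(y)_{m,\mu}
= c^+_m(\mu) + \sum_{P\in\overline S\setminus S} c^-_{m,P}(y,\mu),
\]
where $c^+_m(\mu)$ is the ($y$-independent) coefficient of $q^m$ in $Z^+_\mathbb{V}(\tau)_\mu$ and $c^-_{m,P}(y,\mu)$ the coefficient in $Z^-_{\mathbb{V},P}(\tau)_\mu$. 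The bridge between the analytic left-hand side and the geometric degrees is the Kudla--Millson Green function $\mathcal G_\mathbb{V}(m,y)_\mu$ on $S$, obtained by summing Kudla's local Green germs $\xi(\sqrt y\,v)$ over $v\in(\mu+\mathcal V_\mathbb{Z})$ with $Q(v,v)=2m$, which satisfies a current identity $dd^c\mathcal G_\mathbb{V}(m,y)_\mu+\delta_{\mathrm{NL}_\mathbb{V}(m)_\mu}=\Theta^\circ_\mathbb{V}(y)_{m,\mu}$; for $m=0$ the summand $v=0$ contributes the smooth form $-\Omega$, and $\int_S\Omega=\deg\overline{\mathcal L}$ by Mumford's theorem that the Hodge metric is good along Deligne's extension $\overline{\mathcal L}$.

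First I would excise small coordinate disks $\Delta_\epsilon(P)$ around each puncture and apply Stokes. Since $\mathrm{NL}_\mathbb{V}(m)_\mu$ is a finite subset of $\overline S$ by Cattani--Deligne--Kaplan, the cycle term integrates to $\deg\mathrm{NL}_\mathbb{V}(m)_\mu$ (together with $-\deg\overline{\mathcal L}\,\delta_{\mu,0}$ at $m=0$), giving exactly $c^+_m(\mu)$, while the exact term collapses to boundary integrals, so that
\[
\int_S\Theta^\circ_\mathbb{V}(y)_{m,\mu}-c^+_m(\mu)
= -\sum_P\lim_{\epsilon\to0}\oint_{\partial\Delta_\epsilon(P)}d^c\mathcal G_\mathbb{V}(m,y)_\mu
=:\sum_P \mathrm{Res}_P .
\]
The whole statement thus reduces to identifying each residue $\mathrm{Res}_P$ with $c^-_{m,P}(y,\mu)$. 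Working on $\Delta^*$, I would note that the contribution of vectors outside $W_2$ is irrelevant: by Proposition~\ref{prop:theta_no_w_2_bound_1} the piece $\Theta^\circ_\mathbb{V}(y)''_{m,\mu}$ is rapidly decreasing and its Green function carries no residue. I would then reduce in two steps to the $\mathbb{R}$-split orbit $\tilde{\mathbb{V}}^{\mathrm{nilp}}$, for which alone the explicit formulas of Section~\ref{subsection:type_ii_degenerations}--\ref{subsection:type_iii_degenerations} apply: since $\Theta^\circ_\mathbb{V}(y)'_{m,\mu}-\Theta^\circ_{\mathbb{V}^{\mathrm{nilp}}}(y)'_{m,\mu}$ is rapidly decreasing by Proposition~\ref{prop:theta'_difference_bound} and Lemma~\ref{lemma:bound_diff_varphi_V_V_nilp}, the residue is unchanged on passing to $\mathbb{V}^{\mathrm{nilp}}$; and although $\Theta^\circ_{\mathbb{V}^{\mathrm{nilp}}}-\Theta^\circ_{\tilde{\mathbb{V}}^{\mathrm{nilp}}}$ is only $O(1)$ rather than rapidly decreasing, I would use the sharper $O((-\log|t|)^{-1})$ decay of the summands from Lemma~\ref{lemma:bound_varphi_nilp_to_split_nilp_orbits} to argue that this bounded difference contributes nothing to the boundary residue.

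For the $\mathbb{R}$-split orbit the residue becomes a concrete computation. In the type II case I would use the factorization \eqref{eq:V_split_phi_identity_1} of $\varphi^\circ_{\tilde{\mathbb{V}}^{\mathrm{nilp}}}$ along $W_2=W_1\oplus\mathrm{Gr}_2^W$, the explicit Chern form \eqref{eq:Chern_form_type_II}, and Poisson summation over the rank-two isotropic lattice $W_1\cap V_\mathbb{Z}$; the vanishing $\mathcal F\phi(0)=0$ from \eqref{eq:Four_transf_const_term_type_II} kills the naive leading term, and the surviving boundary term assembles, up to the arithmetic factor $\left(r_1(V_\mathbb{Z},N)/\mathrm{disc}(\mathrm{Gr}^W_{3,1}Q)\right)^{1/2}$, into $\tfrac{1}{4\pi y}\,\Theta_{\mathrm{Gr}_2^W V_\mathbb{Z}}(\tau)_\mu$, matching \eqref{eq:def_theta_MHS_type_II_2}. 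In the type III case the analogous computation uses \eqref{eq:Chern_form_type_III}, \eqref{eq:phi_type_III_explicit_1}, the integrality statements of Lemma~\ref{lemma:integrality_N_type_iii}, Poisson summation over the rank-one lattice $W_0\cap V_\mathbb{Z}$ and the vanishing \eqref{eq:Four_transf_const_term_type_III}, the graded pieces now separating into a $\mathrm{Gr}_{2,\mathrm{prim}}^W$ factor and a $\mathrm{Gr}_4^W$ factor linked by $N$ as in \eqref{eq:graded_isometry_type_iii}, ultimately producing \eqref{eq:def_theta_MHS_type_III_2}.

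The hard part is twofold. The first difficulty is the $\mathbb{R}$-split reduction itself: showing that the $O(1)$ difference $\Theta^\circ_{\mathbb{V}^{\mathrm{nilp}}}-\Theta^\circ_{\tilde{\mathbb{V}}^{\mathrm{nilp}}}$ does not contribute to $\mathrm{Res}_P$ requires leveraging the $(-\log|t|)^{-1}$ improvement of Lemma~\ref{lemma:bound_varphi_nilp_to_split_nilp_orbits} at the level of the Green function's $\log$-slope rather than merely its size, where a bounded perturbation could a priori alter the residue. The second, and probably the more laborious, is the exact type III evaluation: one must show that the remaining $y_z$-integral of the explicit Schwartz profile against the rank-one lattice $\mathrm{Gr}_4^W V_\mathbb{Z}$ reconstructs the incomplete-Gamma weight $\beta_{3/2}$, hence the non-holomorphic unary theta series $R_{\mathrm{Gr}_4^W V_\mathbb{Z}}$ (equivalently the Eichler integral of $\Theta_{\mathrm{Gr}_4^W V_\mathbb{Z}}$), and that the factors $r_2(V_\mathbb{Z},N)$ and $\mathrm{disc}(\mathrm{Gr}^W_{4,0}Q)$ combine with the intertwiners $\iota$ to give precisely the normalization in \eqref{eq:def_theta_MHS_type_III_2}. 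Keeping track of these constants, and of the regularization implicit in the definition of $\mathrm{Res}_P$, is where the genuine work lies.
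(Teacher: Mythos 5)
Your proposal follows essentially the same route as the paper: the same Kudla Green function and current equation, the same Stokes/excision argument reducing everything to residues $\mathrm{res}_P\,\partial\mathfrak{g}^\circ_\mathbb{V}(y)_{m,\mu}$ at the punctures, the same two-step reduction (first to $\mathbb{V}^{\mathrm{nilp}}$ via Lemma \ref{lemma:bound_diff_varphi_V_V_nilp}, then to the $\mathbb{R}$--split orbit via the $O((-\log|t|)^{-1})$ improvement of Lemma \ref{lemma:bound_Hodge_norm_nilp_to_split_nilp_orbits}), and the same Poisson-summation evaluation in types II and III; these are exactly Lemmas \ref{lemma:residue_current_1}--\ref{lemma:residue_current_3}. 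You have also correctly isolated the two genuinely delicate points (the residue-invariance under the merely bounded $\mathbb{R}$--split perturbation, and the reassembly of the $u$-integral into the $\beta_{3/2}$-weighted unary theta series with the arithmetic normalizations), so the plan is sound as stated.
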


The proof proceeds by checking that both sides have the same Fourier coefficients. That is, let
$$
Z^-_{\mathbb{V},P}(\tau)_\mu = \sum_{m} Z^-_{\mathbb{V},P}(y)_{m,\mu} \cdot q^m
$$
be the Fourier expansion of $Z^-_{\mathbb{V},P}(\tau)_\mu$ and write similarly
$$
Z^+_\mathbb{V}(\tau)_{m,\mu} = \left\{ \begin{array}{rr}
     \mathrm{deg} \ \mathrm{NL}_\mathbb{V}(m)_\mu & \text{ if } m>0, \\
     -\mathrm{deg}(\overline{\mathcal{L}}), & \text{ if } (m,\mu) = (0,0), \\
     0, & \text{ otherwise,}
\end{array} \right.
$$
for the Fourier coefficients of $Z^+_\mathbb{V}(\tau)_\mu$. Theorem \ref{thm:Z_Four_expansion} is then equivalent to the identity
\begin{equation} \label{thm:Z_Four_coeffs_comparison}
\int_S \Theta^\circ_{\mathbb{V}}(y)_{m,\mu} = Z^+_\mathbb{V}(\tau)_{m,\mu} + \sum_{P \in \overline{S} \, \backslash \, S} Z^-_{\mathbb{V},P}(y)_{m,\mu}
\end{equation}
for all $m$ and $\mu$.

\subsection{Kudla--Millson forms and Noether--Lefschetz loci}

The main input needed to prove Theorem \ref{thm:Z_Four_expansion} is the computation of the residues at the boundary of certain Green functions $\mathfrak{g}^\circ(y)_{m,\mu}$ for the Noether--Lefschetz loci obtained by pulling back Green functions for special divisors on orthogonal Shimura varieties. The latter Green functions were introduced by Kudla in \cite{KudlaAnnals}. Let us briefly recall their definition. Consider the Kudla--Millson theta series
$$
\Theta_{\mathrm{KM}}(\tau)_\mu =  \sum_{v \in \mu + V_\mathbb{Z}} \varphi_{\mathrm{KM}}(y^{1/2} v) e^{\pi i x Q(v,v)}
$$
and let us write 
$$
\Theta_{\mathrm{KM}}(\tau)_\mu = \sum_{m \in \tfrac{1}{2}Q(\mu,\mu)+\mathbb{Z}} \Theta_{\mathrm{KM}}^\circ(y)_{m,
\mu} \cdot q^m
$$
for its Fourier expansion. One of the main properties of  $\Theta_{\mathrm{KM}}(\tau)_\mu$ is that it defines a closed differential form and its Fourier coefficients $\Theta^\circ_{\mathrm{KM}}(y)_{m,\mu}$ are Poincar\'e dual to a certain special divisor $Z(m,\mu)$ (see \cite{KudlaDuke}) whose intersection with $S$ gives the Noether--Lefschetz locus $\mathrm{NL}_\mathbb{V}(m)_\mu$. 

In \cite{KudlaAnnals}, Kudla introduced a Green function $\mathfrak{g}^\circ(y)_{m,\mu}$ for the special divisor $Z(m,\mu)$, i.e. a smooth function on $\Gamma \backslash \mathbb{D}-|Z(m,\mu)|$ satisfying Green's equation
\begin{equation}
\mathrm{dd^c}[\mathfrak{g}^\circ(y)_{m,\mu}] + \delta_{Z(m,\mu)} = [\Theta_\mathrm{KM}^\circ(y)_{m,\mu}-\varphi_{\mathrm{KM}}(0)\delta_{(m,\mu)=(0,0)}].
\end{equation}
Here $\mathrm{d^c}=(4\pi i)^{-1}(\partial - \overline{\partial})$, so that $\mathrm{dd^c}=-(2\pi i)^{-1} \partial\overline{\partial}$, and the term $\delta_{(m,\mu) = (0,0)}$ equals one if $(m,\mu) = (0,0)$ and vanishes otherwise.

Pulling back $\mathfrak{g}^\circ(y)_{m,\mu}$ by the period map $\Phi_\mathbb{V}$ associated with $\mathbb{V}$, we obtain a function $\mathfrak{g}^\circ_\mathbb{V}(y)_{m,\mu}$ whose main properties are summarized in the following Proposition.

\begin{proposition}
For $v$ a local section of $\mathcal{V}_\mathbb{R}$, define
\begin{equation}
\nu_\mathbb{V}^\circ(v) = e^{-2\pi h(s_v)}
\end{equation}
and $\nu_\mathbb{V}(v)=e^{-\pi Q(v,v)} \nu_\mathbb{V}^\circ(v)$. Then
\begin{equation} \label{def:Green_form}
\mathfrak{g}^\circ_\mathbb{V}(y)_{m,\mu} := \int_1^
\infty \left( \sum_{\substack{0 \neq v \in \mu + \mathcal{V}_\mathbb{Z} \\ Q(v,v)=2m }} \nu^\circ_{\mathbb{V}}((yu)^{1/2} v) \right) \frac{du}{u}
\end{equation}
defines a smooth function on $S-|\mathrm{NL}_\mathbb{V}(m)_\mu|$ that satisfies the differential equation
\begin{equation}
\mathrm{dd^c}[\mathfrak{g}^\circ_{\mathbb{V}}(y)_{m,\mu}] + \delta_{\mathrm{NL}_\mathbb{V}(m)_\mu} = [\Theta_\mathbb{V}^\circ(y)_{m,\mu} + \Omega \delta_{(m,\mu)=(0,0)}]
\end{equation}
as currents on $S$. (Here $\delta_{\mathrm{NL}_\mathbb{V}(m)_\mu}$ denotes the current of integration against the divisor associated with $\mathrm{NL}_\mathbb{V}(m)_\mu$, understood to vanish if $m \leq 0$.)
\end{proposition}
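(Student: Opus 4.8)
The plan is to recognize the asserted identity as the pullback, under the holomorphic period map $\Phi_\mathbb{V}\colon S \to \Gamma\backslash\mathbb{D}$, of Kudla's Green current equation on $\Gamma\backslash\mathbb{D}$ recalled just above; equivalently, one can prove it directly on $S$. I would present the direct route, which is self-contained and makes the multiplicities transparent. The two ingredients are a pointwise ``fundamental relation'' between $\nu_\mathbb{V}^\circ$ and $\varphi_\mathbb{V}^\circ$ that lets one carry out the $u$-integral by the fundamental theorem of calculus, and a Poincar\'e--Lelong analysis near $|\mathrm{NL}_\mathbb{V}(m)_\mu|$ that produces the divisor term.

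First I would record the fundamental relation. Writing $h=h(s_v)$ and using $\nu_\mathbb{V}^\circ(u^{1/2}v)=e^{-2\pi u h}$ together with
$$
\varphi_\mathbb{V}^\circ(u^{1/2}v)=e^{-2\pi u h}\bigl(-(1+2\pi u h)\Omega + iu\,\partial\overline{\partial}h\bigr),
$$
the latter obtained from the definition of $\varphi_\mathbb{V}$ and the identity $h(s_v)\theta\wedge\overline{\theta}=2\pi i\,h(s_v)\Omega+\partial\overline{\partial}h(s_v)$, a direct computation of $\mathrm{dd^c}=-(2\pi i)^{-1}\partial\overline{\partial}$ in the $S$--variables yields, as smooth forms,
$$
\mathrm{dd^c}\bigl[\nu_\mathbb{V}^\circ(u^{1/2}v)\bigr]=-u\frac{\partial}{\partial u}\varphi_\mathbb{V}^\circ(u^{1/2}v).
$$
Replacing $u$ by $yu$ and integrating $\tfrac{du}{u}$ over $[1,\infty)$, the right-hand side telescopes: off $|\mathrm{NL}_\mathbb{V}(m)_\mu|$ one has $h(s_v)>0$ for every $v$ with $Q(v,v)=2m$, so $\varphi_\mathbb{V}^\circ((yu)^{1/2}v)\to 0$ as $u\to\infty$ and the boundary term at $u=1$ is $\varphi_\mathbb{V}^\circ(y^{1/2}v)$. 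Summing over $0\neq v\in\mu+\mathcal{V}_\mathbb{Z}$ with $Q(v,v)=2m$ gives $\mathrm{dd^c}\mathfrak{g}_\mathbb{V}^\circ(y)_{m,\mu}=\Theta_\mathbb{V}^\circ(y)_{m,\mu}$ as smooth forms on $S-|\mathrm{NL}_\mathbb{V}(m)_\mu|$; for $(m,\mu)=(0,0)$ the sum omits $v=0$, so one instead recovers $\Theta_\mathbb{V}^\circ(y)_{0,0}-\varphi_\mathbb{V}^\circ(0)=\Theta_\mathbb{V}^\circ(y)_{0,0}+\Omega$ since $\varphi_\mathbb{V}(0)=-\Omega$, which is exactly the constant-term correction in the statement. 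Convergence of the integral and the sum, hence smoothness on the complement of $\mathrm{NL}$, follows from the Hodge-norm estimates of Section \ref{section:Integrability}, as $h(s_v)$ is bounded below on compacta away from $\mathrm{div}(s_v)$ while the Hodge norm grows.

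Next I would treat the divisor term. Near a point $z_0\in|\mathrm{NL}_\mathbb{V}(m)_\mu|$ only finitely many $v$ satisfy $s_v(z_0)=0$, and for each such $v$ the holomorphic section $s_v$ of $\mathcal{L}^\vee$ vanishes to some order $k_v$, so $h(s_v)\asymp|z-z_0|^{2k_v}$; the asymptotics of $\int_1^\infty e^{-cu|z-z_0|^{2k_v}}\tfrac{du}{u}$ show that $\mathfrak{g}_\mathbb{V}^\circ(y)_{m,\mu}=-\bigl(\sum_v k_v\bigr)\log|z-z_0|^2+(\text{smooth})$. By Poincar\'e--Lelong, $\mathrm{dd^c}$ of this logarithm contributes $-\bigl(\sum_v k_v\bigr)\delta_{z_0}$, and $\sum_v k_v$ is the natural multiplicity of $\mathrm{NL}_\mathbb{V}(m)_\mu$ at $z_0$, the same one entering $\deg\mathrm{NL}_\mathbb{V}(m)_\mu$. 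Combined with the smooth identity on the complement, this upgrades the pointwise equality to the asserted current equation.

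The main obstacle is this last upgrade: promoting the two separate computations, namely the smooth identity off $\mathrm{NL}$ and the logarithmic singularity along $\mathrm{NL}$, to a single equality of currents on all of $S$. This requires justifying the interchange of $\mathrm{dd^c}$ with both $\int_1^\infty$ and the infinite sum over $v$ in a neighbourhood of the singular locus, where the naive interchange fails, and verifying that the multiplicity $\sum_v k_v$ produced by Poincar\'e--Lelong coincides with the scheme-theoretic multiplicity of $\mathrm{NL}_\mathbb{V}(m)_\mu$ furnished by the Cattani--Deligne--Kaplan theorem. Concretely, I would test against a compactly supported $(0,0)$--form, split the sum over $v$ into the finitely many sections vanishing on the support (handled by Poincar\'e--Lelong) and the remainder (a uniformly convergent family of smooth forms controlled by the Hodge-norm estimates), and pass to the limit; alternatively one invokes the functoriality of Kudla's Green currents under the holomorphic pullback $\Phi_\mathbb{V}$, using that $\Phi_\mathbb{V}(S)\not\subseteq|Z(m,\mu)|$ by properness of the Noether--Lefschetz locus.
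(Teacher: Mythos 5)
Your proposal is correct, but it takes a different route from the paper: the paper gives no proof of this proposition at all, presenting it as the record of properties inherited by pulling back Kudla's Green function $\mathfrak{g}^\circ(y)_{m,\mu}$ from \cite{KudlaAnnals} along the period map $\Phi_\mathbb{V}$ (your ``alternative'' route, compressed to one sentence), whereas you give a self-contained computation on $S$. Your key identity $\mathrm{dd^c}\bigl[\nu^\circ_\mathbb{V}(u^{1/2}v)\bigr]=-u\,\partial_u\varphi^\circ_\mathbb{V}(u^{1/2}v)$ checks out: expanding $\varphi^\circ_\mathbb{V}(u^{1/2}v)=e^{-2\pi uh}\bigl(-(1+2\pi uh)\Omega+iu\,\partial\overline{\partial}h\bigr)$ via the paper's relation $h\,\theta\wedge\overline{\theta}=2\pi i h\Omega+\partial\overline{\partial}h$ and differentiating in $u$ reproduces exactly $-(2\pi i)^{-1}\partial\overline{\partial}e^{-2\pi uh}$, and the boundary terms at $u=1$ and $u=\infty$, the constant-term correction $\varphi_\mathbb{V}(0)=-\Omega$, and the Poincar\'e--Lelong contribution $\sum_v k_v\,\delta_{z_0}$ all come out with the right signs against the convention $\mathrm{d^c}=(4\pi i)^{-1}(\partial-\overline{\partial})$. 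What your route buys is transparency about the multiplicities (the local vanishing orders $k_v$ of the sections $s_v$ of $\mathcal{L}^\vee$ are visibly the multiplicities defining the divisor $\mathrm{NL}_\mathbb{V}(m)_\mu$, so no appeal to functoriality of Green currents of log type under a holomorphic map is needed); what the paper's route buys is brevity and the reuse of Kudla's already-established current equation on $\Gamma\backslash\mathbb{D}$. You correctly identify the genuine technical content --- justifying the interchange of $\mathrm{dd^c}$ with the $u$-integral and the lattice sum near $|\mathrm{NL}_\mathbb{V}(m)_\mu|$ --- and your proposed remedy (test against compactly supported forms, isolate the finitely many $v$ with $s_v$ vanishing on the support, control the rest by $h(s_v)=(\|v\|^2-2m)/2\to\infty$) is the standard and adequate one. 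The only minor imprecision is attributing the convergence of the sum on compacta of $S-|\mathrm{NL}_\mathbb{V}(m)_\mu|$ to the degeneration estimates of Section 4; all that is needed there is local uniform comparability of the Hodge metric with a fixed metric, as in remark (1) following Theorem \ref{thm:convergence_main}.
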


Let us fix $m$ and $\mu$ and choose small disks $D_{P,\epsilon}$ around each point $P$ in the support of $\mathrm{NL}_\mathbb{V}(m)_\mu$ as well as in $\overline{S} \, \backslash \, S$ whose radii tend to zero as $\epsilon \to 0$. By the above proposition and the integrability of $\Theta_{\mathbb{V}}^\circ(y)_{m,\mu}$ we have
\begin{equation}\label{eq:Section_5_reduction_to_local_residue}
\begin{split}
\int_S \Theta^\circ_\mathbb{V}(y)_{m,\mu} &+ \mathrm{deg}(\overline{\mathcal{L}})\delta_{(m,\mu)=(0,0)} \\ &= \lim_{\epsilon \to 0} \int_{S-\cup D_{P,\epsilon}} (\Theta^\circ_\mathbb{V}(y)_{m,\mu}+\Omega \delta_{(m,\mu)=(0,0)}) \\
&= \lim_{\epsilon \to 0} \int_{\partial(S-\cup D_{P,\epsilon})} \mathrm{d^c} \mathfrak{g}^\circ_{\mathbb{V}}(y)_{m,\mu} \\
&= \mathrm{deg} \ \mathrm{NL}_\mathbb{V}(m)_\mu - \sum_{P \in \overline{S} \, \backslash \, S} \lim_{\epsilon \to 0} \int_{\partial D_{P,\epsilon}}\mathrm{d^c}\mathfrak{g}^\circ_\mathbb{V}(y)_{m,\mu} \\
&= \mathrm{deg} \ \mathrm{NL}_\mathbb{V}(m)_\mu - \sum_{P \in \overline{S} \, \backslash \, S} \mathrm{res}_P \ \partial \mathfrak{g}^\circ_\mathbb{V}(y)_{m,\mu},
\end{split}
\end{equation}
where in the last line $\mathrm{res}_P$ denotes the residue at $P$ of the $(1,0)$--form $\partial \mathfrak{g}_\mathbb{V}^\circ(y)_{m,\mu}$. Thus to establish \eqref{thm:Z_Four_coeffs_comparison} it suffices to prove the identity
\begin{equation}
\label{thm:Z_Four_coeffs_comparison_2}
-\mathrm{res}_P \ \partial \mathfrak{g}^\circ_\mathbb{V}(y)_{m,\mu} =  Z^-_{\mathbb{V},P}(y)_{m,\mu}
\end{equation}
for all $m$ and $\mu$ and all $P \in \overline{S} \, \backslash \, S$. Note that the residue $\mathrm{res}_P$ depends only on the restriction of $\mathbb{V}$ to a small disk centered at $P$. 

\subsection{Local residue computations} It follows from \eqref{thm:Z_Four_coeffs_comparison_2} that to prove Theorem \ref{thm:Z_Four_expansion} it suffices to prove the following three lemmas. In their statements we assume that $\mathbb{V}$ is an arbitrary $\mathbb{Z}$-PVHS of weight two with $h^{2,0}=1$ satisfying \ref{hypothesis:hyp_on_V} on the punctured unit disk $S=\Delta^*$. With the notation of Section \ref{subsection:local_monodromy} we define
$$
\mathfrak{g}^\circ_{\mathbb{V}}(y)'_{m,\mu} = \int_1^\infty \left( \sum_{\substack{0 \neq v \in (\mu +\mathcal{V}_\mathbb{Z}) \cap W_2 \\ Q(v,v)=2m }} \nu^\circ_{\mathbb{V}}((yu)^{1/2}v) \right) \frac{du}{u}.
$$
The strategy is now the same as for the proof of Theorem \ref{thm:convergence_main}: one first shows that the residue of $\partial \mathfrak{g}^\circ_{\mathbb{V}}(y)_{m,\mu}$ agrees with that of $\partial \mathfrak{g}^\circ_{\tilde{\mathbb{V}}^\mathrm{nilp}}(y)'_{m,\mu}$ and then one computes the latter residue  using the explicit formulas for the ``$\mathbb{R}$--split nilpotent orbit'' $\tilde{\mathbb{V}}^\mathrm{nilp}$ in Sections \ref{subsection:type_ii_degenerations} and \ref{subsection:type_iii_degenerations}.

\begin{lemma}\label{lemma:residue_current_1} For any $m$ and $\mu$, we have
$$
\mathrm{res}_{t=0} \ ( \partial \mathfrak{g}^\circ_{\mathbb{V}}(y)_{m,\mu} - \partial \mathfrak{g}^\circ_{\mathbb{V}^\mathrm{nilp}}(y)'_{m,\mu} )= 0. 
$$
\end{lemma}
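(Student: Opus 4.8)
The plan is to adapt to the Green functions the ``reduction to the nilpotent orbit'' carried out for the theta forms in \S\ref{subsection:reduction_to_nilpotent_orbits}. Carrying out the $u$--integral and using that $h(s_{\lambda v})=|\lambda|^2 h(s_v)$, both Green functions become sums of exponential integrals,
$$
\mathfrak g^\circ_{\mathbb V}(y)_{m,\mu}=\sum_{\substack{0\neq v\in\mu+\mathcal V_\mathbb Z\\ Q(v,v)=2m}}\beta\bigl(2\pi y\,h_{\mathbb V}(s_v)\bigr),\qquad \beta(x)=\int_1^\infty e^{-xu}\,\tfrac{du}{u},
$$
and likewise $\mathfrak g^\circ_{\mathbb V^{\mathrm{nilp}}}(y)'_{m,\mu}$ with the sum over $v\in W_2$ and $h_{\mathbb V}$ replaced by $h_{\mathbb V^{\mathrm{nilp}}}$. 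I would split the difference as $\mathfrak g^\circ_{\mathbb V}(y)_{m,\mu}-\mathfrak g^\circ_{\mathbb V^{\mathrm{nilp}}}(y)'_{m,\mu}=f_a+f_b$, where $f_a$ collects the terms with $v\notin W_2$ and $f_b=\sum_{v\in W_2}\bigl(\beta(2\pi y\,h_{\mathbb V}(s_v))-\beta(2\pi y\,h_{\mathbb V^{\mathrm{nilp}}}(s_v))\bigr)$. Since for any function $f$ on $\Delta^*$ one has the elementary bound $|\mathrm{res}_{t=0}\partial f|=\tfrac1{2\pi}\,\bigl|\lim_{\epsilon\to0}\oint_{|t|=\epsilon}\partial f\bigr|\le \limsup_{\epsilon\to0}\sup_{|t|=\epsilon}|t\,\partial_t f|$, it suffices to show that $f_a$, $f_b$ and their first $t$--derivatives are rapidly decreasing as $t\to0$ (differentiation under the $u$--integral being justified by the same bounds).

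The term $f_a$ is the easy one and follows the pattern of Proposition \ref{prop:theta_no_w_2_bound_1}. Any $v\notin W_2$ has a nonzero component in $Y_3\oplus Y_4$, so the lower bound \eqref{eq:Hodge_metric_lower_bound_1} gives $h_{\mathbb V}(s_v)=\tfrac12(\|v\|_t^2-2m)\gtrsim(-\log|t|)$; as $\beta(x)=O(e^{-x})$ for $x\to\infty$, the Poisson--summation estimates \eqref{eq:theta_no_w_2_bound_2}--\eqref{eq:trivial_Poisson_estimate} show $f_a$ is rapidly decreasing, and since $\partial_t$ only introduces factors $u\,\partial_t h_{\mathbb V}(s_v)$ controlled by the nearly bounded structure of the Hodge metric, so is $t\,\partial_t f_a$.

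The heart of the proof is the estimate for $f_b$, where I would use the Hodge--norm comparison of Lemma \ref{lemma:dist_V_to_V_nilp} in the form $|h_{\mathbb V}(s_v)-h_{\mathbb V^{\mathrm{nilp}}}(s_v)|\le\tfrac A2|t|^B\|v\|^2_{\mathcal V^{\mathrm{nilp}},t}$ together with the mean value bound $|\beta(a)-\beta(b)|\le|a-b|/\min(a,b)$ coming from $|\beta'(x)|=e^{-x}/x$. The obstacle is that, for $v\in W_2$, the arguments $h_{\mathbb V^{\mathrm{nilp}}}(s_v)$ are not bounded away from zero: exactly the classes that become Hodge ``at infinity'' (those projecting into $\mathrm{Gr}_{2,\mathrm{prim}}^{W}V$, and all of $\mathrm{Gr}_2^W V$ in the type II case) have $h_{\mathbb V^{\mathrm{nilp}}}(s_v)\to0$, so $\beta$ sits at its logarithmic singularity and a naive Lipschitz bound diverges. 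What rescues the argument is the mismatch of decay rates: the explicit formulas of \S\ref{subsection:type_ii_degenerations}--\S\ref{subsection:type_iii_degenerations} and the Hodge--norm estimates of \S\ref{subsection:local_monodromy} force these $h_{\mathbb V^{\mathrm{nilp}}}(s_v)$ (and hence, by the same comparison, $h_{\mathbb V}(s_v)$) to decay no faster than $\sim(-\log|t|)^{-1}$, whereas their difference is $O(|t|^B)$; thus the relative error $|t|^B(-\log|t|)\to0$ and the mean value bound yields
$$
\bigl|\beta(2\pi y\,h_{\mathbb V}(s_v))-\beta(2\pi y\,h_{\mathbb V^{\mathrm{nilp}}}(s_v))\bigr|=O\bigl(|t|^B(-\log|t|)\,\|v\|^2_{\mathcal V^{\mathrm{nilp}},t}\bigr),
$$
uniformly in $v$, the large--$v$ tail being summable by the exponential decay of $\beta$ just as for $f_a$. (Any exceptional flat class with $h_{\mathbb V^{\mathrm{nilp}}}(s_v)\equiv0$ is constant in $t$ and so contributes nothing to $\partial\mathfrak g^\circ_{\mathbb V^{\mathrm{nilp}}}(y)'_{m,\mu}$.)

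For the derivative $t\,\partial_t f_b$ I would argue exactly as in Lemma \ref{lemma:bound_diff_varphi_V_V_nilp}: the refined bound \eqref{eq:h_V_h_V_nilp_bound}, that $t^{-B}(h_{\mathcal V}(t)_{ij}-h_{\mathcal V^{\mathrm{nilp}}}(t)_{ij})\in B\Delta$, together with the stability of $B\Delta$ under $t\log|t|\,\partial_t$ and $\overline t\log|t|\,\partial_{\overline t}$, transmits the $|t|^B$ gain to the $t$--derivatives of $h_{\mathbb V}(s_v)-h_{\mathbb V^{\mathrm{nilp}}}(s_v)$, while the logarithmic lower bound on $h$ absorbs the $x^{-2}$ growth of $\beta''$ near $0$. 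This gives $|t\,\partial_t f_b|=O(|t|^B(-\log|t|)^N)\to0$, and combining the two pieces yields $\mathrm{res}_{t=0}(\partial\mathfrak g^\circ_{\mathbb V}(y)_{m,\mu}-\partial\mathfrak g^\circ_{\mathbb V^{\mathrm{nilp}}}(y)'_{m,\mu})=0$, as claimed. The main work, as indicated, is precisely the uniform control of the near--singular terms in $f_b$; note in particular that it is essential to compare with the \emph{general} nilpotent orbit rather than its $\mathbb R$--split partner, since it is the general orbit whose Hodge norms stay comparable to those of $\mathbb V$ within a polynomially small error.
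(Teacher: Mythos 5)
Your proposal is correct and follows essentially the same route as the paper: the same splitting into the $v\notin W_2$ part and the $W_2$--comparison, the same inputs (the lower bound \eqref{eq:Hodge_metric_lower_bound_1}, the $O(|t|^B)$ comparison of Lemma \ref{lemma:dist_V_to_V_nilp} and \eqref{eq:h_V_h_V_nilp_bound}, Poisson summation as in Proposition \ref{prop:theta_no_w_2_bound_1}), and the same key point that the $1/x$ singularity of $\beta'$ is tamed by a lower bound on $h(s_v)$ of the form $(-\log|t|)^{-k}$, which is exactly the paper's estimate $f(y^{1/2}v,t)^{-1}<Ay^{-1}(-\log|t|)^{k}$. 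The only cosmetic differences are that you bound $|\beta(a)-\beta(b)|$ by a mean--value inequality where the paper integrates $\partial\nu^{\circ}$ over $u$ in closed form, and your stated decay rate $(-\log|t|)^{-1}$ should be $(-\log|t|)^{-2}$ for the type III classes with $a(v)=0$, which does not affect the conclusion.
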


\begin{lemma}\label{lemma:residue_current_2} For any $m$ and $\mu$, we have
$$
\mathrm{res}_{t=0} \ ( \partial \mathfrak{g}^\circ_{\mathbb{V}^\mathrm{nilp}}(y)'_{m,\mu} - \partial \mathfrak{g}^\circ_{\tilde{\mathbb{V}}^\mathrm{nilp}}(y)'_{m,\mu} )= 0. 
$$
\end{lemma}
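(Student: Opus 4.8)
The plan is to mirror the strategy of Proposition \ref{prop:theta'_difference_bound}, but now at the level of the Green functions and their residues. Write $G = \mathfrak{g}^\circ_{\mathbb{V}^\mathrm{nilp}}(y)'_{m,\mu} - \mathfrak{g}^\circ_{\tilde{\mathbb{V}}^\mathrm{nilp}}(y)'_{m,\mu}$; after shrinking the disk we may assume (by Cattani--Deligne--Kaplan) that $G$ is smooth on a punctured neighbourhood of $t=0$. First I would record, exactly as in \eqref{eq:Section_5_reduction_to_local_residue}, that $\mathrm{res}_{t=0}\,\partial G = \lim_{\epsilon\to 0}\int_{\partial D_\epsilon}\mathrm{d^c}G$, and compute that on a circle $|t|=r$ (writing $t=re^{i\theta}$) one has $\mathrm{d^c}G = (4\pi)^{-1}(r\partial_r G)\,d\theta$, so that
\[
\int_{\partial D_\epsilon}\mathrm{d^c}G = \tfrac{1}{2}\, r\,\partial_r\bar{G}, \qquad \bar{G}(r) := \tfrac{1}{2\pi}\int_0^{2\pi}G(re^{i\theta})\,d\theta.
\]
Thus the lemma reduces to the claim that $r\,\partial_r\bar{G}\to 0$ as $r\to 0$, and for this it suffices to show $r\,\partial_r G\to 0$ in $L^1(d\theta)$.

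Next I would establish the pointwise input. Since $\nu^\circ_\mathbb{V}(v)=e^{-2\pi h(s_v)}$ and $h(s_v)=\tfrac12(\|v\|^2_{\mathcal{V},t}-Q(v,v))$ by \eqref{eq:relation_Q_h_and_majorant}, the difference $\nu^\circ_{\mathbb{V}^\mathrm{nilp}}(v)-\nu^\circ_{\tilde{\mathbb{V}}^\mathrm{nilp}}(v)$ is governed by $\|v\|^2_{\mathcal{V}^\mathrm{nilp},t}-\|v\|^2_{\tilde{\mathcal{V}}^\mathrm{nilp},t}$, which Lemma \ref{lemma:bound_Hodge_norm_nilp_to_split_nilp_orbits} bounds by $C(-\log|t|)^{-1}\|v\|^2_{\mathcal{V}^\mathrm{nilp},t}$. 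Arguing as in Lemma \ref{lemma:bound_varphi_nilp_to_split_nilp_orbits} (through $|e^x-1|\le|x|e^{|x|}$ and the stability of $(-\log|t|)^{-1}B\Delta$ under $t\log|t|\,\partial_t$ and $\overline{t}\log|t|\,\partial_{\overline{t}}$), I would bound both $\nu^\circ_{\mathbb{V}^\mathrm{nilp}}-\nu^\circ_{\tilde{\mathbb{V}}^\mathrm{nilp}}$ and its radial derivative $r\partial_r(\nu^\circ_{\mathbb{V}^\mathrm{nilp}}-\nu^\circ_{\tilde{\mathbb{V}}^\mathrm{nilp}})$ by $C(-\log|t|)^{-1}e^{-\frac{\pi}{2}\|v\|^2_{\mathcal{V}^\mathrm{nilp},t}}(1+\|v\|^2_{\mathcal{V}^\mathrm{nilp},t})$, uniformly on the angular sector $U$.

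Finally I would assemble the estimate by re-running the Poisson-summation analysis of Sections \ref{subsection:integrability_type_ii} and \ref{subsection:integrability_type_iii}, now for $\nu^\circ$ in place of $\varphi$. Differentiating \eqref{def:Green_form} under the $u$-integral and the lattice sum (justified by Gaussian domination), one has $r\partial_r G = \int_1^\infty\sum_v r\partial_r(\nu^\circ_{\mathbb{V}^\mathrm{nilp}}-\nu^\circ_{\tilde{\mathbb{V}}^\mathrm{nilp}})((yu)^{1/2}v)\,\tfrac{du}{u}$, the sum running over $0\neq v\in(\mu+\mathcal{V}_\mathbb{Z})\cap W_2$ with $Q(v,v)=2m$. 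The constraint $Q(v,v)=2m$ pins the component of $v$ in the positive-definite graded piece to a finite set and leaves a free sum over the isotropic part ($W_1\cap\mathcal{V}^\vee_\mathbb{Z}$ in type II, $W_0\cap\mathcal{V}^\vee_\mathbb{Z}$ in type III), which as in \eqref{eq:trivial_estimate_Theta'} contributes a factor $O(-\log|t|)$. The point is that the linear-in-$(-\log|t|)$ growth of each individual Green function is governed solely by the leading Hodge-norm asymptotics, which by Lemma \ref{lemma:bound_Hodge_norm_nilp_to_split_nilp_orbits} coincide for the two orbits and therefore cancel in $G$; what survives is controlled by the relative error $(-\log|t|)^{-1}$, so that the $O(-\log|t|)$ from the isotropic sum is beaten and $r\partial_r G = O((-\log|t|)^{-1})\to 0$.

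The main obstacle will be this last cancellation. A naive application of the pointwise bounds of the second paragraph gives only $r\partial_r G = O(1)$: the gain $(-\log|t|)^{-1}$ from the $\mathrm{SL}_2$-orbit comparison is exactly offset by the $O(-\log|t|)$ growth of the isotropic lattice sum. To extract the extra decay one must track the subleading terms in the Poisson expansion --- where the zero-frequency term produces the linear growth whose coefficient depends only on the shared limiting quadratic form, and the non-zero frequencies are exponentially small --- and verify, uniformly in $u\in[1,\infty)$ and over the finite definite directions, that differentiating in the radial direction supplies a further factor of $(-\log|t|)^{-1}$. This bookkeeping, rather than any conceptual difficulty, is the technical heart of the lemma.
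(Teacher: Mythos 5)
Your overall route is the paper's route --- reduce the residue to a boundary integral, feed in the $\mathrm{SL}_2$--orbit comparison of Lemma \ref{lemma:bound_Hodge_norm_nilp_to_split_nilp_orbits}, and control the isotropic directions by Poisson summation --- but there is a genuine gap in your execution, and it stems from a miscount of logarithms in the second paragraph. You bound the radial derivative $r\partial_r(\nu^\circ_{\mathbb{V}^\mathrm{nilp}}-\nu^\circ_{\tilde{\mathbb{V}}^\mathrm{nilp}})$ by the \emph{same} quantity $C(-\log|t|)^{-1}e^{-\pi\|v\|^2/2}(1+\|v\|^2)$ as the function itself. But the stability statement you invoke concerns the normalized operators $t\log|t|\,\partial_t$ and $\overline{t}\log|t|\,\partial_{\overline{t}}$: if $f\in(-\log|t|)^{-1}B\Delta$ then $t\log|t|\,\partial_t f\in(-\log|t|)^{-1}B\Delta$, hence $t\partial_t f\in(-\log|t|)^{-2}B\Delta$. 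Concretely, \eqref{lemma:current_identity_2} says $|t\log|t|\,\partial_t\|v\|^2_{\mathcal V}|\leq C\|v\|^2_{\mathcal V,t}$, so $|t\partial_t\|v\|^2_{\mathcal V}|\leq C\|v\|^2_{\mathcal V,t}(-\log|t|)^{-1}$, and the analogous bounds for the differences carry $(-\log|t|)^{-2}$. The correct pointwise bound on $r\partial_r(\nu^\circ_{\mathbb{V}^\mathrm{nilp}}-\nu^\circ_{\tilde{\mathbb{V}}^\mathrm{nilp}})$ is therefore $O\bigl((-\log|t|)^{-2}e^{-\pi h_{\tilde{\mathcal V}}(s_v)}\bigr)$, which already beats the $O(-\log|t|)$ contributed by the isotropic Poisson sum and yields $r\partial_r G=O((-\log|t|)^{-1})\to 0$ directly. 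The ``obstacle'' you identify in your last paragraph is an artifact of the lost factor.

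This matters because your proposed resolution of that phantom obstacle --- extracting a cancellation of the zero--frequency Poisson terms between the two orbits --- is exactly the part you leave as a sketch (``this bookkeeping\ldots is the technical heart''), so the proof as written is incomplete precisely at its claimed crux; and that route would in any case require computing the leading $(-\log|t|)$ coefficient of $\mathfrak{g}^\circ_{\mathbb{V}^\mathrm{nilp}}(y)'_{m,\mu}$ itself, for which the explicit formulas of Sections \ref{subsection:type_ii_degenerations} and \ref{subsection:type_iii_degenerations} are not available (they hold only for the $\mathbb{R}$--split orbit $\tilde{\mathbb{V}}^\mathrm{nilp}$). The paper's proof avoids all of this: it shows that $\partial\mathfrak{g}^\circ_{\mathbb{V}^\mathrm{nilp}}(y)'_{m,\mu}-\partial\mathfrak{g}^\circ_{\tilde{\mathbb{V}}^\mathrm{nilp}}(y)'_{m,\mu}$ is nearly bounded, i.e.\ $O(1)$ against the Poincar\'e coframe $\tfrac{dt}{t\log|t|}$ (the pointwise gain $(-\log|t|)^{-1}$ times the isotropic growth $(-\log|t|)$ times a convergent $u$--integral, using that the constrained theta sum is $O((-\log|t|)/\sqrt{u})$ by the Poisson computation in the proof of Lemma \ref{lemma:residue_current_3}), and then observes that any nearly bounded $(1,0)$--form has vanishing residue since
$$
\oint_{|t|=r}\frac{dt}{t\log|t|}=\frac{2\pi i}{\log r}\longrightarrow 0 .
$$
If you repair the derivative bound as above, your argument closes and coincides with the paper's; as written, the decisive estimate is both misstated and unproved.
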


\begin{lemma}\label{lemma:residue_current_3} For any $m$ and $\mu$, we have
$$
-\mathrm{res}_{t=0} \  \partial \mathfrak{g}^\circ_{\tilde{\mathbb{V}}^\mathrm{nilp}}(y)'_{m,\mu} =  Z_{\mathbb{V},P}^-(\tau)_{m,\mu}.
$$
\end{lemma}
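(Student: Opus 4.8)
The plan is to compute the residue directly from the explicit Hodge norms of the $\mathbb{R}$--split nilpotent orbit $\tilde{\mathbb{V}}^\mathrm{nilp}$ obtained in Sections \ref{subsection:type_ii_degenerations} and \ref{subsection:type_iii_degenerations}. Write $t = e^{2\pi i z}$ with $z = x + i\,\mathrm{Im}(z)$, so that $\mathrm{Im}(z) = -\tfrac{1}{2\pi}\log|t| \to \infty$ as $t \to 0$ and a positively oriented circle $\partial D_{0,\epsilon}$ is traced by $x \in [0,1]$. Since $\mathfrak{g}^\circ_{\tilde{\mathbb{V}}^\mathrm{nilp}}(y)'_{m,\mu}$ is a real function invariant under $x \mapsto x+1$, on such a circle $\mathrm{d^c}\mathfrak{g} = -\tfrac{1}{4\pi}\,\partial_{\mathrm{Im}(z)}\mathfrak{g}\,dx$ (using $\partial_z - \partial_{\bar z} = -i\,\partial_{\mathrm{Im}(z)}$ and $dz=d\bar z=dx$ along the circle), so that
\begin{equation*}
\mathrm{res}_{t=0}\,\partial\mathfrak{g}^\circ_{\tilde{\mathbb{V}}^\mathrm{nilp}}(y)'_{m,\mu} = \lim_{\mathrm{Im}(z)\to\infty}\left(-\frac{1}{4\pi}\,\partial_{\mathrm{Im}(z)}\int_0^1\mathfrak{g}^\circ_{\tilde{\mathbb{V}}^\mathrm{nilp}}(y)'_{m,\mu}\,dx\right).
\end{equation*}
Everything thus reduces to extracting the coefficient of $\mathrm{Im}(z)$ in the large--$\mathrm{Im}(z)$ expansion of $\int_0^1 \mathfrak{g}'\,dx$, which I will obtain by Poisson summation along the graded pieces of the weight filtration.

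For a type II orbit the integrand $\nu^\circ_{\mathbb{V}}((yu)^{1/2}v)=\exp(-2\pi yu\,h(s_v))$ is independent of $x$, and by \eqref{eq:Hodge_norm_explicit_1_type_II} depends on $v\in(\mu+\mathcal{V}_\mathbb{Z})\cap W_2$ only through $Q(v,v)$ and the $W_1$--component. I would split the inner sum as a finite sum over classes in $(\mu+W_2^\mathbb{Z})/W_1^\mathbb{Z}$ with $Q=2m$ (finite since $Q$ is positive definite on $\mathrm{Gr}_2^W$) of a theta sum over the rank--two lattice $W_1^\mathbb{Z}$. Poisson summation on $W_1^\mathbb{Z}$ gives each such sum as $\tfrac{\mathrm{Im}(z)}{2yu\,\mathrm{covol}(W_1^\mathbb{Z})}$ up to terms exponentially small in $\mathrm{Im}(z)$; the integral $\int_1^\infty u^{-2}\,du=1$ then produces the coefficient of $\mathrm{Im}(z)$. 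The remaining point is to identify the Euclidean form $|a|^2$ on $W_1\otimes\mathbb{R}$: using $e^{1,0}=Ne^{2,1}$ and $iQ(e^{2,1},N\overline{e^{2,1}})=1$ it is the pullback under $N^{-1}$ of the Hodge--Riemann form on $\mathrm{Gr}_3^W$, whence $\mathrm{covol}(W_1^\mathbb{Z})=\tfrac{\deg(Q_3)}{2\,r_1}=\tfrac12\bigl(\mathrm{disc}(\mathrm{Gr}_{3,1}^W Q)/r_1\bigr)^{1/2}$ by the relation $\deg(Q_3)^2=r_1\,\mathrm{disc}(\mathrm{Gr}_{3,1}^W Q)$. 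Assembling the constants reproduces $\bigl(\tfrac{r_1}{\mathrm{disc}}\bigr)^{1/2}\tfrac{1}{4\pi y}$ times the $q^m$--coefficient of $\Theta_{\mathrm{Gr}_2^W V_\mathbb{Z}}(\tau)_\mu$, i.e. \eqref{eq:def_theta_MHS_type_II}.

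The type III case is the crux. Here $h(s_v)$ depends on both $x$ and $\mathrm{Im}(z)$ via \eqref{eq:Hodge_norm_explicit_1_type_III}, and I would organize the lattice sum exactly as in \eqref{eq:Theta'_decomposition_type_III_2}: a vector splits into a $\mathrm{Gr}_4$--part $v'$ (with $a(v')^2=Q_4(\lambda,\lambda)=-Q(v',v')$), a primitive part in $Y_{2,\mathrm{prim}}^\mathbb{Z}$, and a $W_0$--part. The essential mechanism is that Poisson summation over the rank--one lattice $W_0^\mathbb{Z}$ combined with $\int_0^1 dx$ collapses both the $W_0$--sum and the $x$--integral into a single factor $\tfrac{\mathrm{Im}(z)}{\mathrm{covol}(W_0^\mathbb{Z})\sqrt{2yu}}$, uniformly in the $\mathrm{Gr}_4$--coordinate and up to exponentially small terms. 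Differentiating in $\mathrm{Im}(z)$ removes the linear factor and leaves a $u^{-1/2}$, so the surviving integral becomes $\int_1^\infty u^{-3/2}e^{-2\pi y Q_4(\lambda,\lambda)u}\,du=\beta_{3/2}(2\pi y Q_4(\lambda,\lambda))$; summing over the $\mathrm{Gr}_4$--coordinate $\lambda$ assembles $R_{\mathrm{Gr}_4^W V_\mathbb{Z}}$, while the finite sum over the primitive coordinate at the fixed level $Q=2m$ assembles the $q^m$--coefficient of $\Theta_{\mathrm{Gr}_{2,\mathrm{prim}}^W V_\mathbb{Z}}$, the constraint $Q(v_{\mathrm{prim}},v_{\mathrm{prim}})-Q_4(\lambda,\lambda)=2m$ matching the opposite--sign phases of $R\otimes\Theta$ in \eqref{eq:def_theta_MHS_type_III}. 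The constant is then pinned down by the rank--one identity $\mathrm{Vol}(\mathrm{Gr}_4^W V_\mathbb{Z})=r_2(V_\mathbb{Z},N)\,\mathrm{disc}(\mathrm{Gr}_{4,0}^W Q)$ relating $\mathrm{covol}(W_0^\mathbb{Z})$ to $r_2$ and $\mathrm{disc}$.

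Two points require care, and the second is the main obstacle. First, one must justify interchanging $\lim_{\mathrm{Im}(z)\to\infty}$, $\partial_{\mathrm{Im}(z)}$ and the sums/integrals, and check that the Poisson error terms — exponentially small in $\mathrm{Im}(z)$ but oscillating in $x$ — contribute nothing after $\int_0^1 dx$ and differentiation; this is routine given the Gaussian decay but should be stated. Second, and this is the real bookkeeping difficulty, one must match discriminant--group indices: the residue is naturally written via sums over the integral graded lattices $W_j^\mathbb{Z}$, whereas $Z^-_{\mathbb{V},P}(\tau)_\mu$ is built from $\rho_{\mathrm{Gr}_2^W V_\mathbb{Z}}$ (type II) or $\rho_{\mathrm{Gr}_{2,\mathrm{prim}}^W V_\mathbb{Z}}\otimes\rho_{(\mathrm{Gr}_4^W V_\mathbb{Z})^-}$ (type III) pushed into $\rho_{\mathcal{V}_\mathbb{Z}}$ by the intertwiners $\iota$ of \eqref{eq:def_iota_map_1} and \eqref{eq:def_iota_map_2}. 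Verifying that the coset $(\mu+\mathcal{V}_\mathbb{Z})\cap W_2$ reorganizes precisely according to $\iota$ — in particular that the finite--index sublattice identifications of Lemma \ref{lemma:integrality_N_type_iii} are compatible with the Poisson normalizations — is what makes the equality hold componentwise in $\mu$; the factor $2$ inside $\bigl(\tfrac{r_2}{2\,\mathrm{disc}}\bigr)^{1/2}$ ultimately traces to the one--dimensional Gaussian integral $\int_\mathbb{R}e^{-2\pi c s^2}\,ds=(2c)^{-1/2}$ entering the $W_0$--Poisson step.
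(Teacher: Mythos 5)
Your proposal is correct and follows essentially the same route as the paper: Poisson summation over $W_1^\mathbb{Z}$ (type II) or $W_0^\mathbb{Z}$ (type III) to extract the term linear in $\mathrm{Im}(z)=-\tfrac{1}{2\pi}\log|t|$, the $u$--integral producing $\int_1^\infty u^{-2}du$ resp.\ $\beta_{3/2}$, and the identification of the covolumes with $(\mathrm{disc}(\mathrm{Gr}_{3,1}^WQ)/r_1)^{1/2}$ and $(\mathrm{disc}(\mathrm{Gr}_{4,0}^WQ)/r_2)^{1/2}$ via $N$--preimages of integral bases. The only cosmetic difference is that you phrase the residue as $-\tfrac{1}{4\pi}\partial_{\mathrm{Im}(z)}$ of the $x$--average, whereas the paper reads it off as the coefficient of $-\log|t|^2$ in the asymptotic expansion of $\mathfrak{g}^\circ$; the coset bookkeeping you flag is handled in the paper by the decompositions \eqref{eq:Theta'_decomposition_type_III_1}--\eqref{eq:Theta'_decomposition_type_III_2} and the isometry \eqref{eq:graded_isometry_type_iii}, exactly as you anticipate.
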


The proof of these lemmas is analogous to the proofs of similar lemmas in Sections \ref{subsection:Integrability_Theta''}, \ref{subsection:reduction_to_nilpotent_orbits}, \ref{subsection:integrability_type_ii} and \ref{subsection:integrability_type_iii}. It will be convenient to define
\begin{equation}
\tilde{\Theta}_{\mathbb{V}}(y)_{m,\mu} = \sum_{\substack{v \in \mu
+ \mathcal{V}_\mathbb{Z} \\ Q(v,v) = 2m}} \nu^\circ_{\mathbb{V}}(y^{1/2} v)
\end{equation}
and write 
$$
\tilde{\Theta}_{\mathbb{V}}(y)_{m,\mu}=\tilde{\Theta}_{\mathbb{V}}(y)'_{m,\mu}+\tilde{\Theta}_{\mathbb{V}}(y)''_{m,\mu},
$$
where in $\tilde{\Theta}_{\mathbb{V}}(y)'_{m,\mu}$ the sum runs over vectors in $W_2$ while in $\tilde{\Theta}_{\mathbb{V}}(y)''_{m,\mu}$ it runs over vectors not in $W_2$. Since $\nu^\circ_\mathbb{V}(0)=1$ and hence $\partial \nu^\circ_\mathbb{V}(0)=0$, we can drop the condition $v \neq 0$ in \eqref{def:Green_form} when computing $\partial \mathfrak{g}^\circ_{\mathbb{V}}(y)_{m,\mu}$ ; that is, we have
\begin{equation}
\partial \mathfrak{g}^\circ_{\mathbb{V}}(y)_{m,\mu}= \int_1^\infty \partial \tilde{\Theta}_{\mathbb{V}}(uy)_{m,\mu} \frac{du}{u}
\end{equation}
and 
$$
\partial \mathfrak{g}^\circ_{\mathbb{V}}(y)_{m,\mu} = \partial \mathfrak{g}^\circ_{\mathbb{V}}(y)'_{m,\mu} + \partial \mathfrak{g}^\circ_{\mathbb{V}}(y)_{m,\mu}''
$$
with
\begin{equation}\label{eq:def_transgression_1}
\begin{split}
\partial \mathfrak{g}^\circ_{\mathbb{V}}(y)'_{m,\mu} &= \int_1^\infty \partial \tilde{\Theta}_{\mathbb{V}}(uy)'_{m,\mu} \frac{du}{u} \\
\partial \mathfrak{g}^\circ_{\mathbb{V}}(y)''_{m,\mu}  &= \int_1^\infty \partial \tilde{\Theta}_{\mathbb{V}}(uy)''_{m,\mu} \frac{du}{u}.
\end{split}
\end{equation}

\begin{proof}[Proof of Lemma \ref{lemma:residue_current_1}]
This reduces to
\begin{align}\label{lemma:Green_form_estimate_1_identity_1}
\mathrm{res}_{t=0} \ \partial \mathfrak{g}_{\mathbb{V}}^\circ(y)_{m,\mu}'' &= 0 \\
\label{lemma:Green_form_estimate_1_identity_2}
\mathrm{res}_{t=0} \ ( \partial \mathfrak{g}^\circ_{\mathbb{V}}(y)'_{m,\mu} - \partial \mathfrak{g}^\circ_{\mathbb{V}^\mathrm{nilp}}(y)'_{m,\mu} ) &= 0. 
\end{align}
To prove \eqref{lemma:Green_form_estimate_1_identity_1} we can use \eqref{eq:def_transgression_1} and the explicit expression 
\begin{equation} \label{eq:nu_0_explicit}
\partial \nu^\circ_\mathbb{V}(y^{1/2}v) = \partial(e^{-2\pi y h(s_v)})= e^{-2\pi y h(s_v)} \cdot (-\pi y \partial \|v\|^2_\mathcal{V})
\end{equation}
(recall that $\|v\|^2_{\mathcal{V}}=Q(v,v)+2h(s_v)$ and hence $2 \partial h(s_v) = \partial \|v\|_\mathcal{V}^2$). With the notation of \eqref{eq:Hodge_norm_matrix_form}, we have
$$
\partial \|v\|^2_\mathcal{V} = \sum_{i,j} \overline{a_i}a_j \partial h_{ij}(t).
$$
As in the proof of Lemma \ref{lemma:bound_varphi_V} one shows that the forms $e_{ij}^{-1} \partial h_{ij}$ are nearly bounded and hence that
\begin{equation} \label{lemma:current_identity_2}
|\partial \|v\|_\mathcal{V}^2|_t  \leq C \|v\|^2_{\mathcal{V},t}
\end{equation}
for some positive constant $C$,
giving the bound
$$
|\partial \tilde{\Theta}_\mathbb{V}(uy)''_{m,\mu}|_t \leq C \cdot \sum_{\substack{v \in \mathcal{V}^\vee_\mathbb{Z} \\ v \notin W_2 \\ Q(v,v)=2m}} e^{-2\pi uyh(s_v)}\pi uy\|v\|^2_t
$$
and hence
\begin{equation}
\begin{split}
|\partial \mathfrak{g}^\circ_\mathbb{V}(y)''_{m,\mu}|_t &\leq C \cdot \sum_{\substack{v \in \mathcal{V}^\vee_\mathbb{Z} \\ v \notin W_2 \\ Q(v,v)=2m}} \int_1^\infty e^{-2\pi uyh(s_v)}\pi y\|v\|^2_t du \\
&= C \cdot \sum_{\substack{v \in \mathcal{V}^\vee_\mathbb{Z} \\ v \notin W_2 \\ Q(v,v)=2m}} e^{-2\pi y h(s_v)} \frac{\|v\|_t^2}{2h(s_v)} \\
&= e^{2\pi y m}C \cdot \sum_{\substack{v \in \mathcal{V}^\vee_\mathbb{Z} \\ v \notin W_2 \\ Q(v,v)=2m}} e^{-\pi y \|v\|^2_t} \left(1- \frac{2m}{\|v\|_t^2}\right)^{-1}.
\end{split}
\end{equation}
By \eqref{eq:Hodge_metric_lower_bound_1}, the factor $(1-2m\|v\|_t^{-2})^{-1}$ in the last expression is bounded above by an expression of the form $(1-A(-\log|t|)^{-1})^{-1}$ for some $A>0$. The argument in the proof of Proposition \ref{prop:theta_no_w_2_bound_1} now shows that $|\partial \mathfrak{g}^\circ_\mathbb{V}(y)''_{m,\mu}|_t$ is rapidly decreasing as $t \to 0$, proving \eqref{lemma:Green_form_estimate_1_identity_1}.

To show that \eqref{lemma:Green_form_estimate_1_identity_2} holds one can follow closely the arguments proving Lemma \ref{lemma:bound_diff_varphi_V_V_nilp}  and Proposition \ref{prop:theta'_difference_bound}. One first shows by differentiating \eqref{eq:h_V_h_V_nilp_bound} that
\begin{equation} \label{lemma:current_identity_1}
|\partial\|v\|^2_{\mathcal{V},t}-\partial \|v\|^2_{\mathcal{V}^\mathrm{nilp},t}|_t = O(|t|^B \|v\|^2_{\mathcal{V},t})
\end{equation}
for some positive constant $B$. Multiplying \eqref{eq:exp_V_V_nilp_diff_bound} by $e^{\pi Q(v,v)}$ yields
\begin{equation}
|e^{-2\pi  h_\mathcal{V}(s_v)}-e^{-2\pi  h_{\mathcal{V}^\mathrm{nilp}}(s_v)}|_t < C |t|^B e^{-\pi  (2h_\mathcal{V}(s_v)-A|t|^B \|v\|^2_{\mathcal{V},t})} \|v\|^2_{\mathcal{V},t}.
\end{equation}
Combined with \eqref{lemma:current_identity_2} and \eqref{lemma:current_identity_1}, this gives the bound
\begin{equation}
\begin{split}
|\partial \nu^\circ_{\mathbb{V}}(v) - \partial \nu^\circ_{\mathbb{V}^\mathrm{nilp}}(v)|_t &< C |t|^B e^{-\pi (2h_\mathcal{V}(s_v)-A|t|^B \|v\|^2_{\mathcal{V}})} \cdot  \|v\|^2_{\mathcal{V}}(1+\|v\|^2_{\mathcal{V}}) %\\
%& < C' |t|^B e^{-\pi h_\mathcal{V}(s_v)}
\end{split}
\end{equation}
for some positive constants $A$, $B$ and $C$. 

Writing $f(v,t)=2h_\mathcal{V}(s_v)-A|t|^B \|v\|^2_{\mathcal{V}}$, we have
$$
\int_1^\infty e^{-\pi u f(v,t)} u \|v\|^2_{\mathcal{V}} \frac{du}{u} =  \|v\|^2_{\mathcal{V}} \frac{e^{-\pi  f(v,t)}}{\pi f(v,t)}
$$
and
$$
\int_1^\infty e^{-\pi u f(v,t)} u^2 \|v\|^4_{\mathcal{V}} \frac{du}{u} =  \|v\|^4_{\mathcal{V}} \left(\frac{e^{-\pi  f(v,t)}}{\pi f(v,t)} + \frac{e^{-\pi  f(v,t)}}{(-\pi f(v,t))^2} \right).
$$
By \eqref{eq:Hodge_metric_lower_bound_1}, there exist $A>0$ and $k \in \mathbb{N}$ such that $f(y^{1/2}v,t)^{-1} < Ay^{-1} ((-\log |t|)^k)$ for all non--zero $v \in \mathcal{V}^\vee_\mathbb{Z}$, and so for $v \in \mathcal{V}^\vee_\mathbb{Z}$ with $Q(v,v)=2m$ we obtain
\begin{equation}
\begin{split}
\int_1^\infty |\partial \nu^\circ_\mathbb{V}((yu)^{1/2}v) & -\partial \nu^\circ_{\mathbb{V}^\mathrm{nilp}}((yu)^{1/2}v)|_t \frac{du}{u} \\
& < C (y^{-1}+y^{-2}) |t|^B (-\log|t|)^K e^{2\pi y m} e^{-\pi y \|v\|^2_{\mathcal{V}}/2}
\end{split}
\end{equation}
for positive constants $B$, $C$ and $K$. Property \eqref{lemma:Green_form_estimate_1_identity_2} now follows as in the proof of Proposition \ref{prop:theta'_difference_bound}.
\end{proof}

\begin{proof}[Proof of Lemma \ref{lemma:residue_current_2}]
It suffices to show that
$$
|\partial \mathfrak{g}^\circ_{\mathbb{V}^\mathrm{nilp}}(y)'_{m,\mu} - \partial \mathfrak{g}^\circ_{\tilde{\mathbb{V}}^\mathrm{nilp}}(y)'_{m,\mu} |_t
$$
is bounded for $t$ in a fixed angular sector where $|\cdot|_t$ denotes the Poincar\'e metric, i.e. that the form $ \mathfrak{g}^\circ_{\mathbb{V}^\mathrm{nilp}}(y)'_{m,\mu} - \partial \mathfrak{g}^\circ_{\tilde{\mathbb{V}}^\mathrm{nilp}}(y)'_{m,\mu}$ is nearly bounded.

To see this, let us write $\mathcal{V}=\mathcal{V}^{\mathrm{nilp}}$ and $\tilde{\mathcal{V}} = \tilde{\mathcal{V}}^{\mathrm{nilp}}$. Using \eqref{eq:nu_0_explicit}, \eqref{eq:bound_Hodge_norm_nilp_to_split_nilp_orbits} and the elementary inequality $|e^x-1|\leq |x|e^{|x|}$ we estimate
\begin{equation}
\begin{split}
\pi^{-1}|\partial \nu^\circ_{\mathbb{V}^\mathrm{nilp}}(v)& - \partial \nu^\circ_{\tilde{\mathbb{V}}^\mathrm{nilp}}(v)|_t \\
&
= |e^{-2\pi h_\mathcal{V}(s_v)} \partial\|v\|^2_{\mathcal{V}}- e^{-2\pi h_{\tilde{\mathcal{V}}}(s_v)} \partial\|v\|^2_{\tilde{\mathcal{V}}}|_t \\
&\leq |e^{-2\pi h_\mathcal{V}(s_v)}-e^{-2\pi h_{\tilde{\mathcal{V}}}(s_v)}| \cdot |\partial \|v\|^2_{\tilde{\mathcal{V}}}|_t \\
& \quad + e^{-2\pi h_{\tilde{\mathcal{V}}}(s_v)} \cdot |\partial \|v\|^2_{\mathcal{V}} - \partial\|v\|^2_{\tilde{\mathcal{V}}}|_t \\
%&\leq e^{-2\pi h_{\tilde{\mathcal{V}}}(s_v)} \cdot \left( \pi |\|v\|^2_{\mathcal{V},t} - \|v\|^2_{\tilde{\mathcal{V}},t}| \cdot \|v\|^2_{\tilde{\mathcal{V}},t} \right. \\
% & \quad \left. + |\partial \|v\|^2_{\mathcal{V}} - \partial\|v\|^2_{\tilde{\mathcal{V}}}|_t \right)
& \leq  C e^{-\pi h_{\tilde{\mathcal{V}}}(s_v)} \cdot (-\log |t|)^{-1},
\end{split}
\end{equation}
where $C$ depends only on $m$. This gives
\begin{equation}
\begin{split}
|\partial \mathfrak{g}^\circ_{\mathbb{V}^{\mathrm{nilp}},m}(y)' - \partial \mathfrak{g}^\circ_{\tilde{\mathbb{V}}^{\mathrm{nilp}},m}(y)'|_t & \leq \int_1^\infty \sum_{\substack{v \in \mu + \mathcal{V}_\mathbb{Z} \\ Q(v,v)=2m}} |\nu^\circ_{\mathbb{V}^{\mathrm{nilp}}}((yu)^{1/2}v)-\nu^\circ_{\tilde{\mathbb{V}}^{\mathrm{nilp}}}((yu)^{1/2}v)|_t \frac{du}{u} \\
&\leq C (-\log |t|)^{-1} \int_1^\infty \sum_{\substack{v \in \mu + \mathcal{V}_\mathbb{Z} \\ Q(v,v)=2m}} e^{-\pi yu h_{\tilde{\mathcal{V}}}(s_v)} \frac{du}{u}.
\end{split}
\end{equation}
The proof of Lemma \ref{lemma:residue_current_3} will show that the integrand in the last expression is $O((-\log |t|)/\sqrt{u})$. It follows that $\partial \mathfrak{g}^\circ_{\mathbb{V}^{\mathrm{nilp}},m}(y)' - \partial \mathfrak{g}^\circ_{\tilde{\mathbb{V}}^{\mathrm{nilp}},m}(y)'$ is nearly bounded.
\end{proof}

\begin{proof}[Proof of Lemma \ref{lemma:residue_current_3}]
We consider the type II and Type III cases separately.

Assume first that $\tilde{\mathbb{V}}^\mathrm{nilp}$ has a degeneration of type II at $P$; this is the setting of Section \ref{subsection:integrability_type_ii}. Arguing as in that section, and with the same notation, we note first that for $v \in W_2^\mathbb{Z}$ we have
\begin{equation} \label{eq:V_split_psi_identity_1}
\nu^\circ_{\tilde{\mathbb{V}}^\mathrm{nilp}}(v) =  e^{-2\pi h(s_v)} = e^{-\pi  \|\pi_1(v)\|^2_\mathcal{V}}
\end{equation}
(cf. \eqref{eq:V_split_phi_identity_1}). For $\tilde{\Theta}_{\tilde{\mathbb{V}}^\mathrm{nilp}}(uy)'_{m,\mu}$, this gives
\begin{equation} \label{eq:residue_computation_nilp_type_II_1}
\begin{split}
\tilde{\Theta}_{\tilde{\mathbb{V}}^\mathrm{nilp}}(uy)'_{m,\mu} &= \sum_{\substack{v \in \mu
+ W_2^\mathbb{Z} \\ Q(v,v) = 2m}} \nu^\circ_{\tilde{\mathbb{V}}^\mathrm{nilp}}((uy)^{1/2} v)  \\
&= \sum_{\substack{v \in (\mu + W_2^\mathbb{Z})/W_1^\mathbb{Z} \\ Q(v,v)=2m}} \sum_{v_1 \in W_1^\mathbb{Z}} e^{-\pi yu \|v_1 + \pi_1(v)\|_\mathcal{V}^2}.
\end{split}
\end{equation}
The singularity of the inner sum as $t \to 0$ can be determined using Poisson summation: with $\Phi$ as in \eqref{eq:trivialization_W_1_type_II} we can write
\begin{equation}
\begin{split}
\sum_{v_1 \in W_1^\mathbb{Z}} e^{-\pi yu \|v_1 + \pi_1(v)\|_\mathcal{V}^2} &= \sum_{n \in \mathbb{Z}^2} e^{-2\pi \frac{yu}{\mathrm{Im}(z)} |(a_1+n_1)\alpha_1 + (a_1+n_2)\alpha_2|^2 }  \\
&= \left| \det\begin{pmatrix}
\alpha_1 & \overline{\alpha_1} \\ \alpha_2 & \overline{\alpha_2}
\end{pmatrix} \right|^{-1} \frac{\mathrm{Im}(z)}{yu}  + \mathrm{o}(\mathrm{Im}(z)/yu) \\
&= \frac{1}{2\pi yu}\left|\det\begin{pmatrix}
\alpha_1 & \overline{\alpha_1} \\ \alpha_2 & \overline{\alpha_2}
\end{pmatrix} \right|^{-1} \cdot (-\log|t|) +o((-\log |t|)/(yu)).
\end{split}
\end{equation}
%(CAREFUL WITH FACTORS OF $2$! CHECK AGAIN.) 
To compute the determinant, recall that $\alpha_1$, $\alpha_2$ are defined by
$$
\lambda_j = \alpha_j e^{1,0} + \overline{\alpha_j} e^{0,1}, \qquad j=1,2,
$$
where $\lambda_1, \lambda_2$ are a fixed basis of $W_1^\mathbb{Z}$. Pick $\tilde{\lambda_1}, \tilde{\lambda_2} \in V_\mathbb{Q}$ such that $N \tilde{\lambda_j} = \lambda_j$; then 
$$
\tilde{\lambda_j} \equiv \alpha_j e^{2,1} + \overline{\alpha_j} e^{1,2} \mod W_{2,\mathbb{R}}, \qquad j=1,2,
$$
and by \eqref{eq:Q_matrix_type_II} we have
$$
Q(\tilde{\lambda_j},\lambda_k) = \begin{pmatrix} 0 & i \overline{\alpha_1} \alpha_2 -i\alpha_1 \overline{\alpha_2} \\ i\alpha_1 \overline{\alpha_2} -i\overline{\alpha_1}\alpha_2  & 0 \end{pmatrix}.
$$
It follows that
\begin{equation}
\begin{split}
\left|\det\begin{pmatrix}
\alpha_1 & \overline{\alpha_1} \\ \alpha_2 & \overline{\alpha_2}
\end{pmatrix} \right| &= 2|\mathrm{Im}(\alpha_1\overline{\alpha_2})| \\
& =|\det(Q(\tilde{\lambda_j},\lambda_k))|^{1/2} \\
&= \left(\frac{\mathrm{disc} (\mathrm{Gr}_{3,1}^W Q)}{r_1(V_\mathbb{Z},N)}\right)^{1/2}
\end{split}
\end{equation}
and hence

\begin{equation*}
%\begin{split}
\int_1^\infty \tilde{\Theta}_{\tilde{\mathbb{V}}^\mathrm{nilp}}(uy)'_{m,\mu} \frac{du}{u}
=  Z_{\mathbb{V},P}^-(y)_{m,\mu} \cdot (-\log |t|^2) + \mathrm{o}((-\log |t|)/\sqrt{y}),
%\end{split}
\end{equation*}
which implies the statement for type II degenerations.

Let us now consider the case when $\tilde{\mathbb{V}}^\mathrm{nilp}$ has a degeneration of type III at $P$; this case was considered in Section \ref{subsection:integrability_type_iii}. Arguing as in that section, and with same notation, note first that \eqref{eq:Hodge_norm_explicit_1_type_III} and \eqref{eq:Hodge_norm_explicit_2_type_III} imply that for 
$$
v=v_U+aNe^{2,2}+bN^2e^{2,2} \in W_{2,\mathbb{Z}}
$$
we have
$$
\nu^\circ_{\tilde{\mathbb{V}}^\mathrm{nilp}}(v)=e^{-2\pi  h(s_v)}=e^{-2\pi a^2} e^{-2\pi (b-a\mathrm{Re}(z))^2/\mathrm{Im}(z)^2}
$$

(cf. \eqref{eq:phi_type_III_explicit_0} and \eqref{eq:phi_type_III_explicit_1}). The same argument that led to \eqref{eq:theta_nilp_'_type_III_explicit_1} shows that
$$
\tilde{\Theta}_{\tilde{\mathbb{V}}^\mathrm{nilp}}(yu)'_{m,\mu} = \sum_{\substack{ \lambda + N\nu \, \equiv \, \mu \\ \mod \mathrm{Gr}_2^W V_\mathbb{Z}} } \tilde{\Theta}_{\tilde{\mathbb{V}}^\mathrm{nilp}}(yu)'_{m,\lambda \otimes \nu}
$$
with
\begin{equation} \label{eq:tildetheta_nilp_'_type_III_explicit_1}
\begin{split}
\tilde{\Theta}_{\tilde{\mathbb{V}}^\mathrm{nilp}}(yu)'_{m,\lambda \otimes \nu} = & \sum_{\substack{v' \in N\lambda+\langle v_n \rangle \\ v \in \nu + Y_{2,\mathrm{prim}}^\mathbb{Z} \\ Q(v+v',v+v')=2m}}  e^{2\pi yu Q(v',v')} \\
& \qquad \cdot \sum_{w \in \mu_0+ W_0^\mathbb{Z}} e^{-2\pi \tfrac{yu}{\mathrm{Im}(z)^2}(b(v+v'+w)-a(v')\mathrm{Re}(z))^2}.
\end{split}
\end{equation}

Again the leading term of the inner sum as $t \to 0$ can be determined using Poisson summation: writing $v_0$ for a generator of $W_0^\mathbb{Z}$, we have
\begin{equation*}
\begin{split}
\sum_{w \in \mu_0 + W_0^\mathbb{Z}} e^{-2\pi \tfrac{yu}{\mathrm{Im}(z)^2}(b(v+v'+w)-a(v')\mathrm{Re}(z))^2} &= \frac{\mathrm{Im}(z)}{|b(v_0)|\sqrt{2yu}}+\mathrm{o}(\mathrm{Im}(z)/\sqrt{yu}) \\
&=\frac{-\log |t|}{|b(v_0)| 2\pi \sqrt{2yu}} + \mathrm{o}((-\log|t|)/\sqrt{yu}).
\end{split}
\end{equation*}
To compute $b(v_0)$, pick $\tilde{v}_0 \in V_\mathbb{Q}$ such that $N^2\tilde{v}_0 = v_0$; then
$$
Q(\tilde{v}_0,v_0) = \frac{\mathrm{disc}(\mathrm{Gr}^W_{4,0} Q)}{r_2(V_\mathbb{Z},N)}.
$$
On the other hand, since $v_0=b(v_0) N^2e^{2,2}$, we have
$
\tilde{v}_0 \equiv b(v_0)e^{2,2} \mod W_{2,\mathbb{R}}$ and hence $Q(\tilde{v}_0,v_0)=b(v_0)^2$. Using the notation 
$$
r_L(m)_\mu = \{ v \in \mu+L \ | \ Q(v,v)=2m \}
$$
for the representation numbers of a definite lattice $L$, this shows that
\begin{equation*}
\begin{split}
\tilde{\Theta}_{\tilde{\mathbb{V}}^\mathrm{nilp}}(uy)'_{m,\mu}
\sim & \left(\frac{r_2(V_\mathbb{Z},N)}{2 \mathrm{disc}(\mathrm{Gr}_{4,0}^W Q)} \right)^{1/2} \\
& \times \left( \sum_{a+b=m} r_{Y^\mathbb{Z}_{2,\mathrm{prim}}}(a)_\nu  \cdot r_{\langle v_n \rangle}(b)_{N\lambda} \frac{e^{4\pi y ub}}{4\pi \sqrt{yu}}\right) \cdot (-\log|t|^2),
% \sum_{\iota(\lambda)=\mu} Z_{\mathbb{V},P}^-(\tau)_{m,\lambda} \cdot (-\log |t|^2) + \mathrm{o}((-\log |t|)/\sqrt{y})
\end{split}
\end{equation*}
as $t \to 0$. As remarked in \eqref{eq:graded_isometry_type_iii}, the quotient map $W_2 \to \mathrm{Gr}_2^W V$ induces isometries $Y_{2,\mathrm{prim}}^\mathbb{Z} \simeq (\mathrm{Gr}_{2,\mathrm{prim}}^W V_\mathbb{Z},Q)$ and $\langle v_n \rangle \simeq (\mathrm{Gr}_4^W V_\mathbb{Z},-Q_4)$.
The statement in case III follows.
\end{proof}

\bibliographystyle{amsplain}
\bibliography{refs} 

\author{\noindent \small \textsc{Department of Mathematics, University College London, Gower Street, London WC1E 6BT,
  United Kingdom} \\ 
  {\it E-mail address:} \texttt{l.e.garcia@ucl.ac.uk}}

\end{document}